\numberwithin{equation}{section}
\newcommand{\assign}{:=}
\newcommand{\comma}{{,}}
\newcommand{\nobracket}{}
\newcommand{\tmop}[1]{\ensuremath{\operatorname{#1}}}
\newcommand{\tmstrong}[1]{\textbf{#1}}
\newcommand{\tmtextit}[1]{{\itshape{#1}}}
\newenvironment{itemizedot}{\begin{itemize} }{\end{itemize}}
\newenvironment{proof}{\noindent\textbf{Proof\ }}{\hspace*{\fill}$\Box$\medskip}
\newcounter{nnacknowledgments}
\newtheorem{acknowledgments*}[nnacknowledgments]{Acknowledgments}}
\newtheorem{theorem}{Theorem}[section]
\newtheorem{lemma}[theorem]{Lemma}
\newtheorem{proposition}[theorem]{Proposition}
\date{}
\author{Tej-Eddine Ghoul}
\author{Nader Masmoudi}
\author{Eliot Pacherie}
\affil{NYUAD Research Institute, New York University Abu Dhabi}
\begin{document}

\title{Localisation of perturbations of a constant state in a traffic flow
model}

\maketitle

\begin{abstract}
  We consider, in the Aw-Rascle-Zhang traffic flow model, the problem of the
  asymptotic stability of constant flows. By using a perturbative approach, we
  show the stability in a larger space of perturbation than previous results.
  Furthermore, we are able to compute where the perturbation is mainly
  localised in space for a given time, based on the localisation of the
  perturbation initially. These new ideas can be applied to various other
  models of hyperbolic conservation laws with relaxations.
\end{abstract}

\section{Introduction and presentation of the results}

\subsection{The Aw-Rascle-Zhang model}

\ We look here at the Aw-Rascle-Zhang traffic flow model introduced in
{\cite{MR1750085}} and {\cite{articleZhangZhang}}, that is composed of two
equations on unknowns $\rho, u : \mathbb{R}_x \times \mathbb{R}^+_t
\rightarrow \mathbb{R}$. The quantity $\rho (x, t)$ for $x \in \mathbb{R}, t
\geqslant 0$ is the density of cars on an infinite highway at position $x$ and
time $t$, and $u (x, t)$ is the speed at that same point, and they are
solutions of the system
\begin{equation}
  (\tmop{ARZ}) \left\{\begin{array}{l}
    \partial_t \rho + \partial_x (\rho u) = 0\\
    \partial_t (u + h (\rho)) + u \partial_x (u + h (\rho)) = \frac{1}{\tau}
    (U (\rho) - u) .
  \end{array}\right. \label{maineq}
\end{equation}
The first equation encode the fact that the cars move at speed $u$, and
implies the conservation of mass. The second equation describes the variation
of the speed of the cars. Two effects are taken into account in this model, in
addition to the fact that the speed moves with the cars. First, a pressure $h
(\rho)$, a strictly increasing function of $\rho$, that makes cars entering an
area with higher density slow down. Secondly, an equilibrium speed $U (\rho) =
U_f (1 - \rho)$ at which cars try to move at, and they relax to this speed at
a caracteristic time $\tau > 0$. $U_f > 0$ is the free speed, at which cars
move if alone on the road, and finally $\rho_{\max} = 1$ is the maximum
density, at which $U (\rho_{\max}) = 0$, and $h (\rho)$ blows up when $\rho
\rightarrow \rho_{\max}$.

This model is an improvement of previous traffic flow models like
{\cite{MR72606}}, {\cite{doi:10.1287/opre.4.1.42}} that captures many physical
phenomenons, see {\cite{MR1750085}}.

\

We consider equation (\ref{maineq}) with $\tau > 0, U (\rho) = U_f (1 -
\rho), U_f > 0$ and $h \in C^{\infty} (] 0, 1 [, \mathbb{R})$ with $h' > 0$
and $h (\rho) \rightarrow + \infty$ when $\rho \rightarrow 1$. There exists
constant solutions to this equation: $\rho = \rho_0 \in] 0, 1 [$ and $u = U
(\rho_0)$. We are interested here in their stability.

\subsection{Previous stability results on the constant flow}

The Aw-Rascle-Zhang model is part of a large class of hyperbolic systems with
relaxation of the form
\[ \left\{\begin{array}{l}
     \partial_t \rho + \partial_x (f (\rho, w)) = 0\\
     \partial_t w + \partial_x (g (\rho, w)) = r (\rho, w)
   \end{array}\right. \]
on the unknown $(\rho, w)$, where $f, g, r$ are known function. the system
$(\tmop{ARZ})$ is of this form, with the quantity for any $c \in] 0, 1 [$,
\[ G (\rho) \assign \rho \int_c^{\rho} \frac{h' (\nu)}{\nu} d \nu, w \assign u
   + G (\rho), \]
if we take
\[ f (\rho, w) = \rho w - \rho G (\rho), g (\rho, w) = \frac{w^2}{2} - \frac{g
   (\rho)^2}{2}, r (\rho, w) = \frac{1}{\tau} (U (\rho) + G (\rho) - w) . \]
In this kind of model and natural generalisations, the question of the
stability of constant flows and small travelling waves have been studied
extensively using energy methods. See for instance {\cite{MR3355354}} and
reference therein, as well as {\cite{MR1346371}}, {\cite{MR899951}} for the
stability of constant flows, {\cite{MR872145}}, {\cite{MR1409022}} for the
stability of travelling waves, and {\cite{MR2349349}}, {\cite{MR1755074}} for
generalisations. See also {\cite{MR4195556}} for a recent work an another
model.

\

Let us summarize the results of {\cite{MR1346371}} here in the particular
case of the system (ARZ). Consider an initial data $(\rho_0 + \rho_i, U
(\rho_0) + u_i)$ and let us write the solution of (\ref{maineq}) on the form
$(\rho_0 + \rho, U (\rho_0) + u)$. In {\cite{MR1346371}}, it is shown that at
leading order, $\rho$ satisfies an equation of the form
\[ \partial_t \rho - \nu \partial_x^2 \rho + \partial_x (f_{\ast} (\rho)) = 0,
\]
where $\nu \in \mathbb{R}$ and $f_{\ast}$ is a smooth function. For this
equation to have a solution, it is necessary that $\nu > 0$. This is called
the subcaracteristic condition, and in the case of $(\tmop{ARZ})$, it is
equivalent to $s_{\tmop{cc}} (\rho_0) \assign h' (\rho_0) - U_f > 0$. Then,
considering that $\rho$ is small, this equation is developed as
\[ \partial_t \rho - \nu \partial_x^2 \rho + \lambda_{\ast} \partial_x \rho +
   \frac{f_{\ast}'' (0)}{2} \partial_x (\rho^2) = 0. \]
In the case of $(\tmop{ARZ})$, we can compute that $\lambda_{\ast} \assign U
(\rho_0) - \rho_0 U_f$. This Burgers type equation admit self similar
solutions $\theta (x, t)$, that satisfy
\begin{equation}
  \left\{\begin{array}{l}
    \partial_t \theta - \nu \partial_x^2 \theta + \lambda_{\ast} \partial_x
    \theta + \frac{f_{\ast}'' (0)}{2} \partial_x (\theta^2) = 0\\
    \theta (x, - 1) = m \delta (x)
  \end{array}\right. \label{theta0}
\end{equation}
where $m = \int_{\mathbb{R}} \rho_i (x) d x$ is the mass of the perturbation
(which is conserved with time) and $\delta$ is the dirac function. In the case
$f''_{\ast} (0) = 0$, this solution is
\[ \theta (x, t) = \frac{m}{4 \pi \nu \sqrt{1 + t}} \exp \left( - \frac{(x -
   \lambda_{\ast} (1 + t))^2}{4 \nu (1 + t)} \right), \]
that is the Gaussian. In the case $f''_{\ast} (0) \neq 0$ the solution is also
explicit and has similar properties (see {\cite{MR1346371}} below equation
(3.1)).

The main result of {\cite{MR1346371}} is the asymptotic stability of this
diffusion wave.

\begin{theorem}[{\cite{MR1346371}}, Theorem 1.1]
  \label{th4}Consider equation (\ref{maineq}) with an initial data $(\rho_0 +
  \rho_i, U (\rho_0) + u_i)$, $\rho_i, u_i \in C^2_{\tmop{loc}}$, with $\rho_0
  \in] 0, 1 [$ and $s_{\tmop{cc}} (\rho_0) = h' (\rho_0) - U_f > 0$. With $m =
  \int_{\mathbb{R}} \rho_i (x) d x$, we consider $\theta$ the solution of
  (\ref{theta0}) and $\theta_0$ its value at $t = 0$. Then, there exists
  $\delta_0 > 0$ depending on $\rho_0, h' (\rho_0), U_f$ such that, if the
  functions $z_0, w_0$ defined by
  \[ \rho_i = \theta_0 + \partial_x z_0, u_i = U (\theta_0) + w_0 \]
  are such that
  \[ | m | + \sum_{l = 0}^3 \| \partial_x^l z_0 \|_{L^2 (\mathbb{R})} +
     \sum_{l = 0}^2 \| \partial_x^l w_0 \|_{L^2 (\mathbb{R})} + \| z_0 \|_{L^1
     (\mathbb{R})} + \| w_0 \|_{L^1 (\mathbb{R})} \leqslant \delta_0, \]
  then (\ref{maineq}) admit a classical solution for this initial condition
  for all positive time. Furthermore, if we decompose this solution as
  $(\rho_0 + \rho, U (\rho_0) + u)$, then for $l \in \{ 0, 1, 2 \}$,
  \[ \| \partial_x^l (\rho - \theta) (., t) \|_{L^p} = O_{t \rightarrow
     \infty} (1 + t)^{- 1 - \frac{l}{2} + \frac{1}{2 p}} \tmop{for} p \in [1,
     + \infty] \]
  and
  \[ \| \partial_x^l (u - U (\theta)) (., t) \|_{L^p} = O_{t \rightarrow
     \infty} (1 + t)^{- 1 - \frac{l}{2} + \frac{1}{2 p}} \tmop{for} p \in [2,
     + \infty] . \]
\end{theorem}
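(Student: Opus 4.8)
The plan is to run the now‑classical energy method for hyperbolic systems with relaxation of {\cite{MR1346371}}, organised so as to expose which ingredient does the work at each step. \textbf{Step 1 (perturbation variables).} Since $\int_{\mathbb R}\rho_i=m$ is exactly the mass carried by $\theta_0$, and mass is conserved both by $(\tmop{ARZ})$ and by \eqref{theta0}, the difference $\rho-\theta$ has zero total mass for every $t\ge0$ and can be written as $\partial_x z$ with $z(\cdot,0)=z_0$; put also $w\assign u-U(\theta)$ with $w(\cdot,0)=w_0$. Inserting $(\rho,u)=(\rho_0+\theta+\partial_x z,\ U(\rho_0)+U(\theta)+w)$ into \eqref{maineq} and using that $\theta$ solves — up to terms quadratic and higher in $\theta$ — the Burgers‑type equation appearing in \eqref{theta0}, one gets a quasilinear system for $(z,w)$ whose linear part, frozen at the constant state $\rho_0$, is schematically
\[
  \partial_t z + \lambda_\ast\,\partial_x z + a\,w \;=\; \mathcal Q_z+\mathcal S_z,\qquad
  \partial_t w + \mu\,\partial_x w + \tfrac1\tau\,b\,w + d\,\partial_x^2 z \;=\; \mathcal Q_w+\mathcal S_w,
\]
with $b>0$; the crucial point is that eliminating $w$ via its relaxation term turns the $z$‑equation into the convection–diffusion operator $\partial_t-\nu\partial_x^2+\lambda_\ast\partial_x$ with $\nu>0$, and this positivity is precisely $s_{\tmop{cc}}(\rho_0)>0$. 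Here $\mathcal Q_z,\mathcal Q_w$ are genuinely quadratic in $(\partial_x z,w)$ and derivatives, and $\mathcal S_z,\mathcal S_w$ are source terms built from $\theta$ and its derivatives (either multiplying the unknowns, or purely in $\theta$, coming from the $O(\theta^2)$ residual of the approximate equation satisfied by $\theta$).

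\textbf{Step 2 (local existence and bootstrap quantity).} The system for $(\partial_x z,w)$ is a symmetrizable quasilinear hyperbolic system with a strictly dissipative relaxation term, so local‑in‑time existence and uniqueness of a classical solution for small data is standard; the whole issue is the uniform‑in‑time a priori bound. I would work with a time‑weighted norm $N(T)$ encoding exactly the rates of the theorem: weighted $L^2$ norms $(1+t)^{\frac12+l}\|\partial_x^l z(\cdot,t)\|_{L^2}^2$ for $l\le3$ (equivalently the claimed $L^2$ rate for $\partial_x^l(\rho-\theta)$), weighted $L^2$ norms $(1+t)^{\frac32+l}\|\partial_x^l w(\cdot,t)\|_{L^2}^2$ for $l\le2$, and a weighted $L^1$ control $(1+t)\|\partial_x z(\cdot,t)\|_{L^1}^2$ of $\rho-\theta$; the $L^1$ norms of $z_0,w_0$ enter the smallness only to feed the Duhamel estimates reaching the $L^\infty$ end of the scale.

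\textbf{Step 3 (weighted energy estimates and the endpoints).} For each $l$, differentiate the system $l$ times, pair with the symmetrizer times $(1+t)^{\gamma_l}$, and integrate in $x$ and $t$. The relaxation term gives a coercive contribution $\int_0^t(1+s)^{\gamma_l}\|\partial_x^l w\|_{L^2}^2$, and combining the two equations after an integration by parts gives a contribution controlling $\int_0^t(1+s)^{\gamma_l}\|\partial_x^{l+1}z\|_{L^2}^2$ (here $\nu>0$ is used); the price is the lower‑order term $\gamma_l\int_0^t(1+s)^{\gamma_l-1}(\cdots)$ from differentiating the weight, to be absorbed by the dissipation at order $l$ or by the estimate at order $l-1$ — choosing the $\gamma_l$ so this finite chain closes is the main bookkeeping. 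Simultaneously $\mathcal Q_z,\mathcal Q_w,\mathcal S_z,\mathcal S_w$ are estimated with the a priori bound $N(T)\le\varepsilon$ and the explicit Gaussian decay $\|\partial_x^l\theta(\cdot,t)\|_{L^p}\le C|m|(1+t)^{-\frac12+\frac1{2p}-\frac l2}$ (and its analogue when $f_\ast''(0)\neq0$), checking that each such term carries a time factor strictly better than the weight it is tested against, so it only contributes $O(\varepsilon^3)$ or $O(|m|\,\varepsilon^2)$. For the endpoints of the $L^p$ range I would use the Green's function of the linearised operator — a Gaussian $K_t$ travelling at speed $\lambda_\ast$ with diffusion $\nu$, plus a faster‑decaying remainder — in a Duhamel formula; in particular $\|\rho-\theta\|_{L^1}\le\|\partial_x K_t\|_{L^1}\|z_0\|_{L^1}+(\text{Duhamel})\le C(1+t)^{-1/2}$, the sharp $L^1$ rate, and $L^\infty$ comes the same way. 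Gagliardo–Nirenberg interpolation between the $L^1$ and $L^2$ bounds gives $\rho-\theta$ for all $p\in[1,\infty]$; since $w$ has no conserved quantity and no $L^1$ control, one only interpolates between $L^2$ and $L^\infty$ for it, which is exactly why the statement restricts $u-U(\theta)$ to $p\ge2$.

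\textbf{Step 4 (closing and the main obstacle).} Collecting the weighted energy inequality and the Duhamel bounds yields $N(T)^2\le C\big(\delta_0^2+N(T)^3+|m|\,N(T)^2\big)$; for $\delta_0$ small a continuity argument gives $N(T)\le C\delta_0$ uniformly in $T$, hence, with local existence, a global classical solution, and reading off the weights in $N$ together with the interpolation yields all the stated $L^p$ rates. I expect the real obstacle to be neither the linear dissipative structure (classical once $s_{\tmop{cc}}(\rho_0)>0$) nor the local theory, but the time‑weight bookkeeping for the borderline quadratic interactions — above all the self‑interaction of $\partial_x z$ in $\mathcal Q_z$, the fingerprint of the Burgers nonlinearity $\tfrac{f_\ast''(0)}2\partial_x(\rho^2)$, which decays only marginally faster than the weight it must be tested against; handling it relies on the cancellation gained from having subtracted the exact self‑similar profile $\theta$ of \eqref{theta0} rather than a bare Gaussian. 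A secondary difficulty is that the coupling term $a\,w$ in the $z$‑equation carries only $L^2$‑type decay (no $L^1$), so one must verify that feeding it through the $z$‑estimates and then the $L^1$/interpolation step does not degrade the sharp $(1+t)^{-1-l/2+1/(2p)}$ rate for $\rho-\theta$.
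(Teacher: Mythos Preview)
This theorem is not proved in the present paper: it is quoted from Chern {\cite{MR1346371}} as background and motivation, and the paper only remarks that ``this stability result is proven using energy methods.'' There is therefore no proof in the paper to compare your proposal against.

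That said, your sketch is a reasonable outline of the strategy in {\cite{MR1346371}}: pass to the anti-derivative variable $z$ with $\rho-\theta=\partial_x z$, exploit the subcharacteristic condition to get effective diffusion after eliminating the relaxed variable, run a hierarchy of time-weighted energy estimates, and supplement with Green-function/Duhamel arguments for the $L^1$ and $L^\infty$ endpoints. Your identification of the borderline quadratic self-interaction of $\partial_x z$ (the Burgers fingerprint) as the delicate point, and of the reason the $u-U(\theta)$ estimate is stated only for $p\ge 2$, is accurate. As a sketch this is fine; turning it into an actual proof would of course require carrying out the weight bookkeeping and the Green-function decomposition in detail, which is where all the work in Chern's paper lies.
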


This stability result is proven using energy methods. Remark that it requires
the initial data to be well localized (the condition $\| z_0 \|_{L^1
(\mathbb{R})} < + \infty$ is a lot stronger than $\rho_i \in L^1
(\mathbb{R})$). For instance, take $\chi$ a smooth positive function with
value $1$ in $[- 1, 1]$ and $0$ outside of $[- 2, 2]$. Then Theorem \ref{th4}
can not be applied to the case $\rho_i = \varepsilon (\chi (x - A) + \chi (x +
A)), u_i = 0$ for $\varepsilon > 0$ small, uniformly in $A$ (it would require
$A \ll \frac{1}{\varepsilon}$). The stability has been shown for a larger
space of perturbation than Theorem \ref{th4}, but for simplier hyperbolic
systems, see {\cite{MR4088950}} and {\cite{MR2303479}}.

This localisation on the initial data is necessary to get the asymptotic
profile. Our goal here is to use a new approach, that will allow us to extend
the space of perturbations. We also want to give a different type of precision
on the behavior of the perturbation. Broadly speaking, we want to know where
the perturbation is mainly localized, given some informations on its position
initially, even if it's not close to a diffusion wave, but without necessarly
having an asymptotic profile. First, let us start by revisiting the linear
problem around a constant flow, on which such properties can be made explicit.

\subsection{The linear stability}

Consider the equation $\left( \ref{maineq} \right)$ for the functions $\rho_0
+ \rho, U (\rho_0) + u$, and keep only the linear terms in $(\rho, u)$. Then,
the equations become
\begin{equation}
  \partial_t \left(\begin{array}{c}
    \rho\\
    u
  \end{array}\right) + \left(\begin{array}{cc}
    U (\rho_0) & \rho_0\\
    0 & U (\rho_0) - \rho_0 h' (\rho_0)
  \end{array}\right) \partial_x \left(\begin{array}{c}
    \rho\\
    u
  \end{array}\right) + \frac{1}{\tau} \left(\begin{array}{cc}
    0 & 0\\
    U_f & 1
  \end{array}\right) \left(\begin{array}{c}
    \rho\\
    u
  \end{array}\right) = 0 \label{linconst} .
\end{equation}
This is the linearized equation around a constant flow $(\rho_0, U (\rho_0))$
for $\rho_0 \in] 0, 1 [$. A usefull remark about this linear problem is that
the equations on $\rho$ and $u$ can be decoupled. In particular, with the
speed $\lambda_{\ast} = U (\rho_0) - \rho_0 U_f$, the functions
\[ q (x, t) \assign \rho (x + \lambda_{\ast} t, t), v (x, t) \assign u (x +
   \lambda_{\ast} t, t) \]
satisfy the equations
\[ \partial_t^2 q + (\lambda_1^0 + \lambda^0_2) \partial_{x t}^2 q +
   \lambda^0_1 \lambda^0_2 \partial_x^2 q + \frac{1}{\tau} \partial_t q = 0 \]
and
\[ \partial_t^2 v + (\lambda_1^0 + \lambda_2^0) \partial_{x t}^2 v +
   \lambda_1^0 \lambda^0_2 \partial_x^2 v + \frac{1}{\tau} \partial_t v = 0,
\]
where $\lambda_1^0 \assign \rho_0 U_f, \lambda^0_2 \assign \rho_0 (U_f - h'
(\rho_0))$. See Lemma \ref{lineq31} for the proof of these computations. The
subcaracteristic condition $s_{\tmop{cc}} (\rho_0) > 0$ implies that
$\lambda_1^0 > 0, \lambda_2^0 < 0$, and in particular $\lambda^0_1 \lambda^0_2
< 0$. The equation satisfied by $q$ (and $v$) is called the damped wave
equation. It turns out that we can compute explicitely its solution.

\subsubsection{The damped wave equation}

In this section, we are interested in the properties of a solution to the
problem
\begin{equation}
  \left\{\begin{array}{l}
    \partial_t^2 f + (\lambda_1 + \lambda_2) \partial_{x t}^2 f + \lambda_1
    \lambda_2 \partial_x^2 f + \delta \partial_t f = S (x, t)\\
    f_{| t = 0 \nobracket} = f_0\\
    \partial_t f_{| t = 0 \nobracket} = f_1
  \end{array}\right. \label{dwe}
\end{equation}
where $\delta > 0, \lambda_1 > 0, \lambda_2 < 0$ are constants, because $q$
and $v$ satisfy this equation with $S = 0 ; \delta = \frac{1}{\tau}, \lambda_1
= \lambda_1^0$ and $\lambda_2 = \lambda_2^0$. We can compute explicitely its
solution.

\begin{proposition}
  \label{dwesol}For \ $\delta > 0, \lambda_1 > 0, \lambda_2 < 0$, the solution
  to the problem (\ref{dwe}) with $f_0, f_1, S \in C^2_{\tmop{loc}}$ satisfies
  \begin{eqnarray*}
    f (x, t) & = & \int_{\lambda_2 t}^{\lambda_1 t} V (y, t) (\delta f_0 +
    (\lambda_1 + \lambda_2) f_0' + f_1) (x - y) d y\\
    & + & \int_{\lambda_2 t}^{\lambda_1 t} \partial_t V (y, t) f_0 (x - y) d
    y\\
    & + & \lambda_1 \frac{e^{\frac{- \delta \lambda_1 t}{\lambda_1 -
    \lambda_2}}}{\lambda_1 - \lambda_2} f_0 (x - \lambda_1 t) - \lambda_2
    \frac{e^{\frac{\delta \lambda_2 t}{\lambda_1 - \lambda_2}}}{\lambda_1 -
    \lambda_2} f_0 (x - \lambda_2 t)\\
    & + & \int_0^t \int_{\lambda_2 (t - s)}^{\lambda_1 (t - s)} V (y, t - s)
    S (x - y, s) d y d s,
  \end{eqnarray*}
  where
  \[ V (y, t) \assign \frac{e^{- \frac{2 \delta}{(\lambda_1 - \lambda_2)^2}
     \left( - \lambda_1 \lambda_2 t + \frac{\lambda_1 + \lambda_2}{2} y
     \right)}}{\lambda_1 - \lambda_2} I_0 \left( \frac{2 \delta \sqrt{-
     \lambda_1 \lambda_2}}{(\lambda_1 - \lambda_2)^2} \sqrt{- (y - \lambda_1
     t) (y - \lambda_2 t)} \right), \]
  $I_0$ being the modified Bessel function of the first kind and of order $0$.
\end{proposition}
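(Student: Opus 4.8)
The plan is to solve \eqref{dwe} by Fourier transform in $x$, reduce to a second-order linear ODE in $t$ for each frequency $\xi$, solve that ODE explicitly, and then recognise the inverse Fourier transform of the resulting kernel as the stated expression involving the Bessel function $I_0$. First I would write $\hat f(\xi,t)$ for the Fourier transform of $f$ in $x$. Plugging into \eqref{dwe}, the operator $\partial_t^2+(\lambda_1+\lambda_2)\partial_{xt}^2+\lambda_1\lambda_2\partial_x^2+\delta\partial_t$ becomes $\partial_t^2 + \bigl(\delta + i\xi(\lambda_1+\lambda_2)\bigr)\partial_t - \xi^2\lambda_1\lambda_2$, so $\hat f$ solves a constant-coefficient ODE in $t$ with source $\hat S(\xi,t)$ and data $\hat f_0,\hat f_1$. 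Its characteristic roots are
\[
r_\pm(\xi) = \frac{-(\delta + i\xi(\lambda_1+\lambda_2)) \pm \sqrt{(\delta+i\xi(\lambda_1+\lambda_2))^2 + 4\xi^2\lambda_1\lambda_2}}{2}.
\]
By Duhamel, $\hat f(\xi,t) = A(\xi,t)\hat f_0 + B(\xi,t)\hat f_1 + \int_0^t B(\xi,t-s)\hat S(\xi,s)\,ds$, where $B(\xi,t) = \frac{e^{r_+ t}-e^{r_- t}}{r_+-r_-}$ is the fundamental solution vanishing at $t=0$ with unit derivative, and $A(\xi,t) = \partial_t B(\xi,t) + (\delta + i\xi(\lambda_1+\lambda_2))B(\xi,t)$ (so that $A(\xi,0)=1$, $\partial_t A(\xi,0)=0$). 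Rewriting $A$ in terms of $B$ produces exactly the combination $\delta f_0 + (\lambda_1+\lambda_2)f_0' + f_1$ paired with $B$, plus a $\partial_t B$ term acting on $f_0$; this already explains the structure of the first two lines of the claimed formula once we identify the inverse transform of $B$ with $V$.

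The heart of the computation is the inverse Fourier transform of $B(\xi,t)$. I would factor out the drift by substituting $\xi \mapsto$ the shifted variable corresponding to the change of frame built into $V$: note the exponential prefactor $e^{-\frac{2\delta}{(\lambda_1-\lambda_2)^2}(-\lambda_1\lambda_2 t + \frac{\lambda_1+\lambda_2}{2}y)}$ is precisely what absorbs the $i\xi(\lambda_1+\lambda_2)$ and the $\delta$ shifts, reducing the discriminant to $\xi^2(\lambda_1-\lambda_2)^2 - \delta^2 \cdot(\text{const})$ type form — i.e. a genuine (undamped) wave operator plus a zeroth-order term, whose fundamental solution is the classical one: supported in the light cone $\lambda_2 t \le y \le \lambda_1 t$, with a jump (the $e^{\pm\cdot}f_0(x-\lambda_i t)$ boundary terms, coming from the $A$–part at the cone edges) and a smooth interior part given by a Bessel function $I_0$ of the hyperbolic distance $\sqrt{-(y-\lambda_1 t)(y-\lambda_2 t)}$. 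Concretely, I expect to use the standard Fourier/Laplace identity $\frac{1}{2\pi}\int e^{i\xi y}\frac{\sinh(t\sqrt{\xi^2+a^2})}{\sqrt{\xi^2+a^2}}\,d\xi = \tfrac12 I_0\!\bigl(a\sqrt{t^2-y^2}\bigr)\mathbbm{1}_{|y|<t}$ after the affine change of variables that maps the interval $[\lambda_2 t,\lambda_1 t]$ to a symmetric one and turns $r_\pm$ into $\pm\sqrt{\xi^2+a^2}$ up to the drift exponential; tracking the constants gives $a = \frac{2\delta\sqrt{-\lambda_1\lambda_2}}{(\lambda_1-\lambda_2)^2}$ and the scaling factor $\frac{1}{\lambda_1-\lambda_2}$, matching $V$.

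The remaining steps are bookkeeping: transfer the convolutions back to physical space (products in $\xi$ become convolutions in $x$, restricted to the cone because $V$ is supported there, which is why all $y$-integrals run over $[\lambda_2 t,\lambda_1 t]$); handle the source term by Duhamel, giving the final double integral $\int_0^t\int_{\lambda_2(t-s)}^{\lambda_1(t-s)} V(y,t-s)S(x-y,s)\,dy\,ds$; and verify the two boundary/jump terms $\lambda_1\frac{e^{-\delta\lambda_1 t/(\lambda_1-\lambda_2)}}{\lambda_1-\lambda_2}f_0(x-\lambda_1 t) - \lambda_2\frac{e^{\delta\lambda_2 t/(\lambda_1-\lambda_2)}}{\lambda_1-\lambda_2}f_0(x-\lambda_2 t)$ arise from differentiating the indicator in $V$ when computing $\partial_t V$ acting on $f_0$, i.e. the Leibniz term at the moving endpoints $y=\lambda_i t$ where $I_0(0)=1$. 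I expect the main obstacle to be purely computational bookkeeping: correctly completing the square in the $\xi$-exponent to separate the drift exponential from the symmetric wave kernel, and tracking the numerous constants $(\lambda_1-\lambda_2)^{-2}$, $\sqrt{-\lambda_1\lambda_2}$, etc., so that the final kernel is \emph{exactly} $V$. A clean way to avoid sign errors is to verify the formula a posteriori: check that the claimed $f$ satisfies the PDE and the initial conditions directly, using $I_0'' + \frac{1}{z}I_0' = I_0$ and the support structure — this sidesteps the inversion entirely and may in fact be the route the authors take.
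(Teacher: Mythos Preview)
Your proposal is correct and would work, but the paper takes a different and considerably cleaner route. Rather than Fourier-transforming and then inverting, the authors start from the \emph{known} explicit solution of the classical damped wave equation $\partial_t^2 u - \partial_x^2 u + \partial_t u = S$ (whose kernel is $J(t,y)=\tfrac{1}{2}e^{-t/2}I_0(\tfrac12\sqrt{t^2-y^2})$, from Courant--Hilbert) and observe that the general operator $\partial_t^2+(\lambda_1+\lambda_2)\partial_{xt}+\lambda_1\lambda_2\partial_x^2+\delta\partial_t$ is obtained from the classical one by a \emph{linear} change of variables $(y,t)\mapsto(ay,\,by+ct)$. Solving three algebraic equations for $a,b,c$ gives $V(y,t)=\tfrac{1}{\lambda_1-\lambda_2}J(y/a,(at-by)/(ac))$ directly, with no Fourier inversion needed. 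After that, they verify by direct computation (your ``a posteriori'' suggestion) that $\int_{\lambda_2 t}^{\lambda_1 t}V(y,t)u_1(x-y)\,dy$ solves the homogeneous problem with zero position data, build the $f_0$ part by the trick $w=\delta v+\partial_t v+(\lambda_1+\lambda_2)\partial_x v$, and handle the source by Duhamel. Your Fourier approach recovers the same kernel but front-loads all the constant-tracking into the inversion step; the paper's change-of-variables approach offloads that onto the classical result and keeps the new computation purely algebraic.
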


We can also show some estimates on the kernel $V$:

\begin{lemma}
  \label{Vest}For $k, n \in \mathbb{N}, \lambda_1 > 0, \lambda_2 < 0, \delta >
  0$, the function
  \[ V (y, t) = \frac{e^{- \frac{2 \delta}{(\lambda_1 - \lambda_2)^2} \left( -
     \lambda_1 \lambda_2 t + \frac{\lambda_1 + \lambda_2}{2} y
     \right)}}{\lambda_1 - \lambda_2} I_0 \left( \frac{2 \delta \sqrt{-
     \lambda_1 \lambda_2}}{(\lambda_1 - \lambda_2)^2} \sqrt{- (y - \lambda_1
     t) (y - \lambda_2 t)} \right) \]
  is smooth on $t \geqslant 0, y \in [\lambda_2 t, \lambda_1 t]$ and there
  exists $C_{k, n} (\lambda_1, \lambda_2, \delta), a_0 (\lambda_1, \lambda_2,
  \delta) > 0$ such that, for any $t \geqslant 0, y \in [\lambda_2 t,
  \lambda_1 t]$, we have
  \[ | \partial_x^k \partial_t^n V (y, t) | \leqslant \frac{C_{k, n}
     (\lambda_1, \lambda_2, \delta) e^{- a_0 \frac{y^2}{1 + t}}}{(1 +
     t)^{\frac{1}{2} + \frac{k}{2} + n}} . \]
\end{lemma}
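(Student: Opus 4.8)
The proof will split into two regimes: $t$ bounded away from $0$ (where $V$ and all derivatives are smooth and the exponential decay in $y$ must be extracted from the explicit formula) and $t$ near $0$ (where one checks boundedness and smoothness directly). The key structural observations are: (i) the argument of $I_0$ involves $\sqrt{-(y-\lambda_1 t)(y-\lambda_2 t)}$, which vanishes at the endpoints $y=\lambda_1 t$ and $y=\lambda_2 t$ of the interval — so $I_0$ is evaluated near $0$ there, where it is smooth and equals $1+O(z^2)$; (ii) away from the endpoints, $I_0(z)\sim \frac{e^z}{\sqrt{2\pi z}}$ as $z\to\infty$, so the exponential growth of $I_0$ must be compared against the decaying prefactor $e^{-\frac{2\delta}{(\lambda_1-\lambda_2)^2}(-\lambda_1\lambda_2 t+\frac{\lambda_1+\lambda_2}{2}y)}$.

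First I would establish the pointwise bound on $V$ itself ($k=n=0$). Writing $z = \frac{2\delta\sqrt{-\lambda_1\lambda_2}}{(\lambda_1-\lambda_2)^2}\sqrt{-(y-\lambda_1 t)(y-\lambda_2 t)}$ and using $I_0(z)\le e^z$ (a clean elementary bound, provable from the series $I_0(z)=\sum z^{2k}/(4^k (k!)^2)\le \sum z^{2k}/(2k)! \le e^z$ for $z\ge 0$... actually more simply $I_0(z)\le e^z$ holds since each term of $I_0$ is dominated by the corresponding even term of $e^z$), the exponent in the resulting bound for $V$ is
\[
-\frac{2\delta}{(\lambda_1-\lambda_2)^2}\Bigl(-\lambda_1\lambda_2 t+\tfrac{\lambda_1+\lambda_2}{2}y\Bigr)+\frac{2\delta\sqrt{-\lambda_1\lambda_2}}{(\lambda_1-\lambda_2)^2}\sqrt{-(y-\lambda_1 t)(y-\lambda_2 t)}.
\]
The crucial algebraic identity is that this expression is a negative definite quadratic form in $(y,\sqrt{t})$-type variables; concretely, I would show it equals $-\frac{\delta}{(\lambda_1-\lambda_2)^2}\bigl(\sqrt{-\lambda_2(\lambda_1 t - y)}-\sqrt{\lambda_1(y-\lambda_2 t)}\bigr)^2$ or a similar perfect-square combination, using $-\lambda_1\lambda_2 t + \frac{\lambda_1+\lambda_2}{2}y = \frac12\bigl(\lambda_1(\lambda_2 t - y)\cdot(-1) + (-\lambda_2)(\lambda_1 t - y)\bigr)$ — the two square roots multiply to give exactly the $I_0$-argument. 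From such a perfect-square bound one reads off decay of order $e^{-c\,\text{dist}(y,\{\lambda_1 t,\lambda_2 t\})^2/t}$ for $y$ in the interior, which in particular gives $e^{-a_0 y^2/(1+t)}$ after absorbing the endpoint behavior and splitting into $|y|\lesssim t$ versus $|y|\sim t$; the $(1+t)^{-1/2}$ factor comes from the large-$z$ asymptotics $I_0(z)\sim e^z/\sqrt{2\pi z}$ when one is genuinely in the interior (where $z\sim t$), and from boundedness of $I_0$ near the endpoints and for small $t$.

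For the derivatives $\partial_x^k\partial_t^n V$, I would proceed by induction, differentiating the explicit formula. Since $V$ depends on $x$ only through... wait, $V(y,t)$ does not depend on $x$ at all — so $\partial_x^k V = 0$ for $k\ge 1$ and only the $\partial_t^n$ estimates are non-trivial. (Presumably the statement is phrased with $\partial_x$ to match notation elsewhere; I would note $\partial_x V=0$.) Each $\partial_t$ brings down either a polynomial factor in $(y,t)$ from the exponential prefactor, or, via $I_0'=I_1$ and the chain rule, a factor $\partial_t z \cdot I_1(z)/\text{(stuff)}$; using $I_1(z)\le I_0(z)\le e^z$ and that $\partial_t z$ is bounded by a constant (the argument $z$ is Lipschitz in $t$ uniformly on the relevant region after rescaling), one finds that $\partial_t^n V$ is bounded by $V$ times a polynomial of degree $\le n$ in the variables $t$ and $y^2/t$ — precisely the quantities controlled by the Gaussian. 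Each such polynomial factor of size $(1+t)$ or $y^2/(1+t)$ is absorbed into the Gaussian at the cost of shrinking $a_0$ slightly and enlarging $C_{k,n}$, which is why the power of $(1+t)$ in the denominator grows by one per $t$-derivative. Near $t=0$ and near the endpoints $y=\lambda_i t$ one instead uses that $I_0$ and all its derivatives are smooth and bounded on compact sets and that $z^2$ is smooth in $(y,t)$ (so $I_0(z)=\tilde I_0(z^2)$ with $\tilde I_0$ entire), giving smoothness of $V$ up to the boundary.

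The main obstacle is the algebra in the first step: correctly identifying the perfect-square form of the combined exponent so that one genuinely gets Gaussian decay $e^{-a_0 y^2/(1+t)}$ uniformly down to $t=0$ and up to the endpoints of $[\lambda_2 t,\lambda_1 t]$, rather than just decay away from the center. A secondary technical point is tracking, through the induction, that differentiation never produces a factor worse than polynomial in $t$ and $y^2/t$ — i.e., that no negative powers of $\bigl(-(y-\lambda_1 t)(y-\lambda_2 t)\bigr)$ survive; this requires using the cancellation $I_1(z)/z \to 1/2$ as $z\to 0$ (equivalently, $I_1(z)/z$ is entire in $z^2$) so that the apparent singularity of $\partial_t I_0(z) = I_1(z)\,\partial_t z$ at the endpoints, where $\partial_t z$ blows up like $1/z$, is in fact removable.
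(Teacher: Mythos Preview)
Your base-case argument ($k=n=0$) is essentially right and matches the paper: the perfect-square identity you propose is exactly what underlies the paper's computation (they phrase it via auxiliary functions $F_0,G_0$ of the self-similar variable $y/t$, but the content is the same), and the $(1+t)^{-1/2}$ indeed comes from the refined asymptotic $I_0(z)\sim e^z/\sqrt{2\pi z}$ in the interior of the cone.

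There are, however, two genuine gaps in your treatment of the derivatives.

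\textbf{First}, $\partial_x$ in the statement means differentiation in the spatial argument of $V$, i.e.\ $\partial_y$. This is not zero. The paper computes $\partial_y V$ explicitly and shows it gains a factor $(1+t)^{-1/2}$ --- this half-power (as opposed to a full power) is the whole reason the exponent in the lemma is $\tfrac12+\tfrac{k}{2}+n$ rather than $\tfrac12+k+n$. So dismissing the $y$-derivatives throws away half the content of the lemma.

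\textbf{Second}, your mechanism for gaining decay under differentiation is not correct. You write that each derivative produces a polynomial factor ``of size $(1+t)$ or $y^2/(1+t)$'' which is then absorbed into the Gaussian; but a factor of size $(1+t)$ would make the bound \emph{worse}, and a factor $y^2/(1+t)$ is absorbable into $e^{-a_0 y^2/(1+t)}$ at the cost of shrinking $a_0$ but yields no additional decay in $t$. The actual mechanism is a cancellation. Writing $V=\frac{e^{\gamma_1}}{\lambda_1-\lambda_2}I_0(\gamma_2)$ with $\gamma_1,\gamma_2$ the exponent and the Bessel argument, one has
\[
\partial_t V = V\Bigl(\partial_t\gamma_1+\partial_t\gamma_2 + \partial_t\gamma_2\bigl(\tfrac{I_0'(\gamma_2)}{I_0(\gamma_2)}-1\bigr)\Bigr).
\]
The crucial point, which your outline misses, is that $\partial_t\gamma_1+\partial_t\gamma_2$ vanishes to \emph{second} order at $y/t=0$: with $\gamma_2(y,t)=tG_0(y/t)$ one finds $\partial_t\gamma_1+\partial_t\gamma_2=-G_0(0)+G_0(y/t)-(y/t)G_0'(y/t)=O((y/t)^2)$. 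Combined with $\tfrac{I_0'}{I_0}-1=O(1/\gamma_2)=O(1/t)$ in the interior, this gives $\partial_t V = V\cdot O((y/t)^2+1/t)$, and the factor $(y/t)^2$ against the Gaussian $e^{-cy^2/t}$ is what produces the extra $(1+t)^{-1}$ via $\sup_y e^{-cy^2/t}(y/t)^2\le C/t$. For $\partial_y$ the analogous cancellation is only to first order, $\partial_y\gamma_1+\partial_y\gamma_2=G_0'(y/t)-G_0'(0)=O(y/t)$, which is why each $y$-derivative gains only $(1+t)^{-1/2}$. Without identifying these cancellations your inductive step would not close.
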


Section \ref{sdampedwave} is devoted to the proof of these two results. They
follow from the computation of the kernel of the simplier and well known
equation $\partial_t^2 u - \partial_x^2 u + \partial_t u = 0$ (see
{\cite{courant2008methods}}).

\subsubsection{Statement of the linear stability}

With the explicit solution coming from Proposition \ref{dwesol} and estimate
in Lemma \ref{Vest} on the kernel that appears, we can show a precise estimate
on the solution of the linearized problem (\ref{linconst}).

\begin{proposition}
  \label{constlinstab}Consider the equation (\ref{linconst}) with $\rho_0 \in]
  0, 1 [$ and $s_{\tmop{cc}} (\rho_0) = h' (\rho_0) - U_f > 0$ and some
  initial data $\rho_{| t = 0 \nobracket} = \rho_0 + \rho_i, u_{| t = 0
  \nobracket} = U (\rho_0) + u_i$ with $\rho_i, u_i \in C^j_{\tmop{loc}}
  (\mathbb{R}, \mathbb{R})$ for some $j \geqslant 2$. Then, the solution
  $(\rho_0 + \rho, U (\rho_0) + u)$ of (\ref{linconst}) is well defined in
  $C^j_{\tmop{loc}} (\mathbb{R}_x \times \mathbb{R}^+_t, \mathbb{R})$ and we
  have the following estimates. There exists $a_L, b_L > 0$ depending on
  $\rho_0, U_f, h' (\rho_0)$ such that, for any $k, n \in \mathbb{N}$ with $k
  + n \leqslant j,$defining
  \[ \lambda_1^0 = \rho_0 U_f > 0, \lambda_2^0 = \rho_0 (U_f - h' (\rho_0)) <
     0, \lambda_{\ast} = U (\rho_0) - \rho_0 U_f, \]
  there exists $K_{n, k} > 0$ depending on $n, k, \rho_0, U_f, h' (\rho_0)$
  such that
  \begin{eqnarray*}
    &  & | \partial_t^n \partial_x^k (u (x - \lambda_{\ast} t, t)) | + |
    \partial_t^n \partial_x^k (\rho (x - \lambda_{\ast} t, t)) |\\
    & \leqslant & \frac{K_{n, k}}{(1 + t)^{\frac{1}{2} + \frac{k}{2} + n}}
    \int_{\lambda_2^0 t}^{\lambda_1^0 t} e^{- a_L \frac{y^2}{1 + t}} (| u_i (x
    - y) | + | \rho_i (x - y) |) d y\\
    & + & K_{n, k} e^{- b_L t} \left( \sum_{j = 0}^{n + k} (| \rho_i^{(j)} |
    + | u_i^{(j)} |) (x - \lambda_1^0 t) + (| \rho_i^{(j)} | + | u_i^{(j)} |)
    (x - \lambda_2^0 t) \right)
  \end{eqnarray*}
  for all $x \in \mathbb{R}, t \geqslant 0$.
\end{proposition}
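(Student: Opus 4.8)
The plan is to reduce the $2\times2$ system (\ref{linconst}) to two decoupled scalar damped wave equations, insert the explicit solution of Proposition \ref{dwesol}, and read off the pointwise bound from the kernel estimate of Lemma \ref{Vest}. First I would pass to $q(x,t)=\rho(x+\lambda_\ast t,t)$ and $v(x,t)=u(x+\lambda_\ast t,t)$; by Lemma \ref{lineq31}, $q$ and $v$ solve (\ref{dwe}) with $S=0$, $\delta=1/\tau$, $\lambda_1=\lambda_1^0$, $\lambda_2=\lambda_2^0$. The initial data follows from (\ref{linconst}) by using its two equations to eliminate $\partial_t\rho|_{t=0}$ and $\partial_t u|_{t=0}$: one gets $q_0:=q|_{t=0}=\rho_i$, $v_0:=v|_{t=0}=u_i$, while $q_1:=\partial_t q|_{t=0}$ and $v_1:=\partial_t v|_{t=0}$ are explicit constant-coefficient combinations of $\rho_i,u_i$ and their first derivatives. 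Feeding this into Proposition \ref{dwesol}, the data combination $\delta q_0+(\lambda_1^0+\lambda_2^0)q_0'+q_1$ reduces to $c_0\rho_i+c_1\rho_i'+c_2 u_i'$ for explicit constants $c_j$ (similarly for $v$, with the undifferentiated $u_i$ cancelling), so $q$ becomes a sum of $\int_{\lambda_2^0 t}^{\lambda_1^0 t}V(y,t)(c_0\rho_i+c_1\rho_i'+c_2 u_i')(x-y)\,dy$, of $\int_{\lambda_2^0 t}^{\lambda_1^0 t}\partial_t V(y,t)\,\rho_i(x-y)\,dy$, and of two boundary terms proportional to $e^{-\delta\lambda_1^0 t/(\lambda_1^0-\lambda_2^0)}\rho_i(x-\lambda_1^0 t)$ and $e^{\delta\lambda_2^0 t/(\lambda_1^0-\lambda_2^0)}\rho_i(x-\lambda_2^0 t)$. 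I would then integrate by parts in $y$ to move the derivatives off $\rho_i,u_i$ (each $g'(x-y)$ turning into $\partial_y V$ at the cost of a boundary term), noting that the boundary evaluation $V(\lambda_i^0 t,t)=(\lambda_1^0-\lambda_2^0)^{-1}e^{\mp\delta\lambda_i^0 t/(\lambda_1^0-\lambda_2^0)}$ (here $I_0(0)=1$) decays exponentially because $\lambda_1^0>0>\lambda_2^0$. After this, $q$ and $v$ are finite sums of terms of two kinds: (I) $\int_{\lambda_2^0 t}^{\lambda_1^0 t}(\partial_y^a V)(y,t)\,g(x-y)\,dy$ or $\int_{\lambda_2^0 t}^{\lambda_1^0 t}(\partial_t V)(y,t)\,g(x-y)\,dy$ with $g\in\{\rho_i,u_i\}$ and $a\in\{0,1\}$; and (II) $e^{-ct}\,g(x-\lambda_i^0 t)$ with $c>0$ and $g\in\{\rho_i,u_i\}$.

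The next step is to apply $\partial_t^n\partial_x^k$ and track how these two kinds transform. A factor $\partial_x$ on a type-(I) integral is absorbed, after one more integration by parts in $y$, into one extra $\partial_y$ on $V$, plus an exponentially small boundary term. A factor $\partial_t$ on $\int_{\lambda_2^0 t}^{\lambda_1^0 t}(\cdots)\,dy$ produces, by differentiating the moving endpoints $\lambda_i^0 t$, two boundary terms carrying an exponentially small factor $(\partial_y^a\partial_t^b V)(\lambda_i^0 t,t)$, together with the integral with one extra $\partial_t$ on $V$. Iterating, $\partial_t^n\partial_x^k q$ and $\partial_t^n\partial_x^k v$ are finite sums of type-(I) integrals $\int_{\lambda_2^0 t}^{\lambda_1^0 t}(\partial_y^a\partial_t^b V)(y,t)\,g(x-y)\,dy$ with $a\le k$, $b\le n$, $g\in\{\rho_i,u_i\}$, and of type-(II) terms $e^{-ct}\,g^{(m)}(x-\lambda_i^0 t)$ with $m\le n+k$, $g\in\{\rho_i,u_i\}$.

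Then comes the estimate. Lemma \ref{Vest} gives $|(\partial_y^a\partial_t^b V)(y,t)|\le C(1+t)^{-\frac12-\frac a2-b}e^{-a_0 y^2/(1+t)}$ on $y\in[\lambda_2^0 t,\lambda_1^0 t]$, so each type-(I) integral is bounded by $C(1+t)^{-\frac12-\frac a2-b}\int_{\lambda_2^0 t}^{\lambda_1^0 t}e^{-a_0 y^2/(1+t)}(|\rho_i|+|u_i|)(x-y)\,dy$; since $a\le k$, $b\le n$ and $(1+t)^{-1/2}\le1$, every such term is dominated by the slowest one, namely $a=k$, $b=n$, which is exactly the first line of the claimed bound with $a_L=a_0$. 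Each type-(II) term is $e^{-ct}$ times a polynomial-in-$t$ prefactor coming from the derivatives of the explicit exponentials, hence $\le Ke^{-b_L t}$ for any fixed $0<b_L<\min(\delta\lambda_1^0,-\delta\lambda_2^0)/(\lambda_1^0-\lambda_2^0)$, giving the second line. Summing the finitely many terms and collecting constants defines $K_{n,k}$. Finally, the rewritten representation — a sum of $V$ and its derivatives, smooth on $\{t\ge0,\ \lambda_2^0 t\le y\le\lambda_1^0 t\}$ by Lemma \ref{Vest}, integrated against undifferentiated $C^j_{\tmop{loc}}$ data, plus the type-(II) boundary terms — defines a $C^j_{\tmop{loc}}$ function of $(x,t)$; together with $u$ recovered from the first equation of (\ref{linconst}) it solves (\ref{linconst}) with the prescribed data by Lemma \ref{lineq31}, hence is the solution, and undoing the change of variables gives the statement.

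The main obstacle is the bookkeeping in the first two steps: one must check that every boundary term produced along the way is an evaluation of some $\partial_y^a\partial_t^b V$ at an endpoint $y=\lambda_i^0 t$ — which the explicit formula and Lemma \ref{Vest} make exponentially small — and that no surviving integral decays more slowly than $(1+t)^{-\frac12-\frac k2-n}$, that is, that the leading contribution is $\int\partial_y^k\partial_t^n V$ against undifferentiated data rather than against $\rho_i'$ or $u_i'$. Both reduce to the accounting rule that each $\partial_x$ costs exactly one $\partial_y$ on $V$ (a gain $(1+t)^{-1/2}$) and each $\partial_t$ costs exactly one $\partial_t$ on $V$ (a gain $(1+t)^{-1}$), which matches the target exponents; the remaining manipulations are routine.
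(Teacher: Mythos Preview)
Your approach is essentially the same as the paper's: reduce to the decoupled damped wave equations via Lemma \ref{lineq31}, insert the representation from Proposition \ref{dwesol}, move all initial-data derivatives onto $V$ by integration by parts in $y$, then differentiate and bound each surviving integral by Lemma \ref{Vest} while collecting the endpoint contributions as exponentially decaying terms. One bookkeeping slip: after your initial integration by parts the starting kernels are $V$, $\partial_y V$, $\partial_t V$, and each subsequent $\partial_x$ (resp.\ $\partial_t$) adds one $\partial_y$ (resp.\ $\partial_t$) to the kernel, so the surviving type-(I) integrals carry $\partial_y^a\partial_t^b V$ with $a\geqslant k$, $b\geqslant n$, not $a\leqslant k$, $b\leqslant n$ as you wrote; with the correct inequality the slowest decay is indeed attained at $(a,b)=(k,n)$ and your conclusion stands.
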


Let us first make a few remarks on this result.
\begin{itemizedot}
  \item Suppose that $\rho_i, u_i \in C^j \cap L^1 (\mathbb{R})$. We check
  that this result implies
  \begin{eqnarray*}
    &  & | \partial_t^n \partial_x^k (u (x - \lambda_{\ast} t, t)) | + |
    \partial_t^n \partial_x^k (\rho (x - \lambda_{\ast} t, t)) |\\
    & \leqslant & \frac{K_{n, k} (\| u_i \|_{C^{n + k} \cap L^1 (\mathbb{R})}
    + \| \rho_i \|_{C^{n + k} \cap L^1 (\mathbb{R})})}{(1 + t)^{\frac{1}{2} +
    \frac{k}{2} + n}} .
  \end{eqnarray*}
  \item This formulation allows us to keep information on the localisation of
  the perturbation when time evolves. That is, to estimate $u (x -
  \lambda_{\ast} t, t)$, we can only look at the initial data in a
  neighborhood of $x$ of size $\sqrt{t}$, and the error committed by doing so
  is almost exponentially small in time, while this main term is, if the
  initial data is $L^1 (\mathbb{R})$, of size $\frac{1}{\sqrt{t}}$. This means
  for instance that if our initial perturbation is compactly supported in $[-
  1, 1]$ at $t = 0$, then for any $\varepsilon > 0$, at time $t \geqslant 1$
  the perturbation outside $[(\lambda_{\ast} - \varepsilon) t, (\lambda_{\ast}
  + \varepsilon) t]$ is exponentially small in time. This is something that
  can not be shown easily using energy methods. By linearity of
  (\ref{linconst}) this can also be applied to a sum of localized
  perturbations.
  
  \item Another way to say this is the following. Amongst all the speeds
  between $\lambda_2^0 < 0$ and $\lambda_1^0 > 0$ that the perturbation could
  take, it only, up to exponentially small error, goes at the speed
  $\lambda_{\ast}$. That is, $\rho (\lambda t, t)$ is exponentially small in
  time for all $\lambda \neq \lambda_{\ast}$.
  
  \item If the initial condition is in $L^1 (\mathbb{R})$, we get the same
  decay in time as Theorem \ref{th4}. Remark also that this estimate make
  sense even if the initial condition are not in $L^1 (\mathbb{R})$, but in
  that case we do not have necessarly a decay in time of the solution.
  
  \item The main term in the estimate looks like the one we would get for the
  heat equation with a transport term, that is the solution of $\left(
  \partial_t - \frac{1}{a_L} \partial_x^2 + \lambda^{\ast} \partial_x \right)
  \rho = 0$. It is in fact the dominating effect at first order (which makes
  sense in regards of Theorem \ref{th4} from {\cite{MR1346371}}). We see two
  main difference with the simplier heat + transport system: the integral is
  only on $[\lambda_2^0 t, \lambda_1^0 t]$ instead of $\mathbb{R}$, and we
  have an additional exponentially small error in time.
  
  \item Remark that here, since we only look at the linear problem, we do not
  need any smallness on the initial data. In fact it does not need to decay at
  infinity for this result to hold, thanks to the finite speed of propagation.
  We can also go above the threshold of maximum density $\rho_{\max} = 1$, but
  this is only true for the linear problem, since in that case the function
  $h'$, that blows up at $1$, is taken in $\rho_0$ and not $\rho_0 + \rho$.
\end{itemizedot}

This proposition is a corollary of Proposition \ref{dwesol} and Lemma
\ref{Vest}, see section \ref{sec5vf} for its proof.

\subsection{The nonlinear stability in $L^1 (\mathbb{R})$}

Our goal here is to show a similar estimate as Proposition \ref{constlinstab}
but in the nonlinear case. Taking equation $\left( \ref{maineq} \right)$ for
the functions $\rho_0 + \rho, U (\rho_0) + u$, and then introducing $q (x, t)
= \rho (x - \lambda_{\ast} t, t), v (x, t) = u (x - \lambda_{\ast} t, t)$ and
the non constant caracteristic speeds $\lambda_1 \assign \rho_0 U_f + v,
\lambda_2 \assign \rho_0 U_f - (\rho_0 + q) h' (\rho_0 + q) + v$, we can check
(see Lemma \ref{lem32} for the computation) that the functions $q, v$ satisfy
the equations
\[ \partial_t^2 v + (\lambda_1 + \lambda_2) \partial_{x t}^2 v + \lambda_1
   \lambda_2 \partial_x^2 v + \frac{1}{\tau} \partial_t v + \Omega_1
   \partial_x v = 0 \]
and
\[ \partial_t^2 q + (\lambda_1 + \lambda_2) \partial_{x t}^2 q + \lambda_1
   \lambda_2 \partial_x^2 q + \left( \frac{1}{\tau} + \Omega_2 \right)
   \partial_t q + \Omega_3 \partial_x q = 0, \]
where $\Omega_1, \Omega_2, \Omega_3$ are small if $q, v$ are small. In
particular, it is no longer possible to decouple the two equations.

\

There are several obstacles to do an estimation similar to the linear case,
that will force us to reduce the quality of the estimates. The first one is
the fact that the caracteristic speeds are no longer constant. In the linear
setting, they were $\lambda_1^0 = \rho_0 U_f, \lambda_2^0 = \rho_0 (U_f - h'
(\rho_0))$, and in the nonlinear setting, they are $\lambda_1 = \rho_0 U_f +
v, \lambda_2 = \rho_0 U_f - (\rho_0 + q) h' (\rho_0 + q) + v.$ Concerning
$\lambda_{\ast}$, if we would define it in the nonlinear setting, it would not
even have the same value for $q$ and $v$ (it would be $\lambda_{\ast} +
\Omega_1$ for $v$ and $\lambda_{\ast} + \Omega_3$ for $q$). A usual method to
deal with nonconstant caracteristic speed is to do a change of coordonate to
make them constant. But here, because there are at least three speeds we would
want to make constant at the same time, this is difficult.

The second one is simply to show that the nonlinear terms does not infer
growth or blow up. They will be consider as source terms of the linear
problem. This is why in Proposition \ref{dwesol} we did the computations with
a generic source term $S$. This iself pose a difficulty, since terms in the
source term will have as many derivatives as linear terms (for instance the
equation on $q$ will contain $(\lambda_1 \lambda_2 - \lambda_1^0 \lambda_2^0)
\partial_x^2 q$ in $S$).

Let us infer our main stability result before discussing on how to solve these
issues. We recall the norm on the space $W^{2, 1} (\mathbb{R}) :$
\[ \| f \|_{W^{2, 1} (\mathbb{R})} \assign \sum_{j = 0}^2 \| f^{(j)} \|_{L^1
   (\mathbb{R})} \]
and the norm on the space $C^2 (\mathbb{R})$:
\[ \| f \|_{C^2 (\mathbb{R})} \assign \sum_{j = 0}^2 \| f^{(j)} \|_{L^{\infty}
   (\mathbb{R})} . \]
\begin{theorem}
  \label{NLconstlocTheorem}Consider the equation (\ref{maineq}) with $\rho_0
  \in] 0, 1 [$ and $s_{\tmop{cc}} (\rho_0) = h' (\rho_0) - U_f > 0$ and
  initial data $\rho_{| t = 0 \nobracket} = \rho_0 + \rho_i, u_{| t = 0
  \nobracket} = u (\rho_0) + u_i$ with $\rho_i, u_i \in C^2_{\tmop{loc}}
  (\mathbb{R}, \mathbb{R})$. Suppose furthermore that $\rho_i, u_i \in W^{2,
  1} (\mathbb{R}) \cap C^2 (\mathbb{R})$ and define
  \[ \varepsilon \assign \| \rho_i \|_{W^{2, 1} (\mathbb{R})} + \| u_i
     \|_{W^{2, 1} (\mathbb{R})} + \| \rho_i \|_{C^2 (\mathbb{R})} + \| u_i
     \|_{C^2 (\mathbb{R})} \]
  as well as
  \[ w_i = \sum_{j = 0}^2 | \rho_i^{(j)} | + | u_i^{(j)} | . \]
  Then, for any $\nu > 0$there exists $\varepsilon_0 > 0$ depending on
  $\rho_0, h' (\rho_0), U_f, \nu,$ such that if $\varepsilon \leqslant
  \varepsilon_0$, then the solution $(\rho_0 + \rho, U (\rho_0) + u)$ of
  (\ref{maineq}) is well defined in $C^2_{\tmop{loc}} (\mathbb{R}^+_t \times
  \mathbb{R}_x, \mathbb{R})$ and we have the following estimates. There exists
  $a, b > 0$ depending on $\rho_0, U_f, h,$ and $\delta > 0$ depending on
  $\rho_0, U_f, h, \varepsilon$ with $\delta = o_{\varepsilon \rightarrow 0}
  (1)$ such that, for any $k, n \in \mathbb{N}$ with $k + n \leqslant
  2,$defining
  \[ \lambda_1^0 = \rho_0 U_f > 0, \lambda_2^0 = \rho_0 (U_f - h' (\rho_0)) <
     0, \lambda_{\ast} = U (\rho_0) - \rho_0 U_f, \]
  \[ \gamma (0, 0) = \frac{1}{2}, \gamma (1, 0) = 1 - \nu, \gamma (0, 1) =
     \frac{3}{2} - \nu, \gamma (k, n) = 1 - \nu \]
  otherwise, there exists $K_{n, k} > 0$ depending on $n, k, \rho_0, U_f, h'
  (\rho_0), \nu$ such that
  \begin{eqnarray*}
    &  & | \partial_t^n \partial_x^k (u (x - \lambda_{\ast} t, t)) | + |
    \partial_t^n \partial_x^k (\rho (x - \lambda_{\ast} t, t)) |\\
    & \leqslant & \frac{K_{n, k}}{(1 + t)^{\gamma (k, n)}} \int_{(\lambda_2^0
    - \delta) t}^{(\lambda_1^0 + \delta) t} e^{- a \frac{y^2}{1 + t}} w_i (x -
    y) d y\\
    & + & K_{n, k} e^{- b t} \sup_{y \in [- \delta t, \delta t]} (w_i (x -
    \lambda_1^0 t + y) + w_i (x - \lambda_2^0 t + y))
  \end{eqnarray*}
  for all $x \in \mathbb{R}, t \geqslant 0$.
\end{theorem}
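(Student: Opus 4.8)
{The proof plan is to set up a bootstrap (continuity) argument on a time interval $[0, T]$, using the explicit representation formula from Proposition \ref{dwesol} applied to the damped-wave equations satisfied by $q(x,t) = \rho(x - \lambda_\ast t, t)$ and $v(x,t) = u(x - \lambda_\ast t, t)$ derived in Lemma \ref{lem32}, and treating every term that is not part of the constant-coefficient linear operator as a source term $S$. Concretely, I would write the damped-wave operator with the \emph{frozen} speeds $\lambda_1^0, \lambda_2^0$ and the frozen damping $\tfrac{1}{\tau}$ on the left-hand side, and push into $S$ the terms $(\lambda_1\lambda_2 - \lambda_1^0\lambda_2^0)\partial_x^2 q$, $(\lambda_1+\lambda_2 - \lambda_1^0 - \lambda_2^0)\partial_{xt}^2 q$, $\Omega_2 \partial_t q$, $\Omega_3 \partial_x q$, and similarly for $v$. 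The point is that each of these coefficients is $O(\varepsilon)$ times a quantity involving at most two derivatives of $q$ and $v$, so $S$ is small but ``costs'' two derivatives, exactly the difficulty flagged in the introduction.}

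\textbf{Definition of the bootstrap norm and main estimate.}
I would introduce, for a constant $\delta$ to be chosen (the same $\delta$ as in the statement, with $\delta = o_\varepsilon(1)$), the weighted controls
\[
N_{k,n}(T) \assign \sup_{t \in [0,T]}\ \sup_{x \in \mathbb{R}}\ (1+t)^{\gamma(k,n)}\, \frac{|\partial_t^n\partial_x^k q(x,t)| + |\partial_t^n\partial_x^k v(x,t)|}{\displaystyle \int_{(\lambda_2^0-\delta)t}^{(\lambda_1^0+\delta)t} e^{-a\frac{y^2}{1+t}} w_i(x-y)\,dy \ +\ e^{-bt}\sup_{|z|\le \delta t}\big(w_i(x-\lambda_1^0 t+z)+w_i(x-\lambda_2^0 t+z)\big)}
\]
for $k+n \le 2$, and the bootstrap assumption $\sum_{k+n\le 2} N_{k,n}(T) \le M$ for a fixed large $M$. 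Local existence in $C^2_{\mathrm{loc}}$ plus finite speed of propagation (the perturbation stays inside the cone of speeds in $[\lambda_2^0-\delta, \lambda_1^0+\delta]$ relative to the $\lambda_\ast$ frame) gives a nonempty interval where this holds. The core of the argument is then: plug the representation formula of Proposition \ref{dwesol} into $q$ and $v$; estimate the three ``linear-data'' contributions exactly as in the proof of Proposition \ref{constlinstab} using the kernel bound of Lemma \ref{Vest}, which produces the stated main Gaussian-type integral with exponent $\gamma$ and the exponentially small boundary term; and estimate the Duhamel term $\int_0^t\int_{\lambda_2(t-s)}^{\lambda_1(t-s)} V(y,t-s) S(x-y,s)\,dy\,ds$ using the bootstrap assumption on $S$ together with Lemma \ref{Vest}. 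The nonlinear self-improvement works because each source term carries a factor $O(\varepsilon)$ (or $O(\delta)$, or a product of two small controlled quantities), so the Duhamel contribution is bounded by $C\varepsilon\, M$ times the same reference quantity with an exponent \emph{no worse} than $\gamma(k,n)$ — after a time-convolution estimate $\int_0^t (1+t-s)^{-\alpha}(1+s)^{-\beta}\,ds$ which is where the loss of $\nu$ in $\gamma(1,0)=1-\nu$, $\gamma(0,1)=\tfrac32-\nu$ comes from (the borderline convolution exponents force an arbitrarily small sacrifice). Choosing $\varepsilon_0$ small enough so that $C\varepsilon_0 M \le \tfrac{M}{2}$ closes the bootstrap, upgrades $M$ to $M/2+($linear constant$)$, and hence extends the solution to all $t \ge 0$.

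\textbf{Handling the nonconstant speeds and the localisation window.}
The genuinely delicate point is the mismatch between the frozen speeds $\lambda_1^0, \lambda_2^0$ in the kernel $V$ and the true variable speeds $\lambda_1 = \rho_0 U_f + v$, $\lambda_2 = \rho_0 U_f - (\rho_0+q)h'(\rho_0+q) + v$ appearing in the \emph{domain of integration} of the Duhamel term (and in the light cone). Here $\delta$ must be chosen so that $|\lambda_i - \lambda_i^0| \le \delta$ uniformly, which is possible since $|v|, |q| = O(\varepsilon)$ by the bootstrap — this is exactly why the integration window in the conclusion is widened from $[\lambda_2^0 t, \lambda_1^0 t]$ to $[(\lambda_2^0-\delta)t, (\lambda_1^0+\delta)t]$ and why the boundary term gets a $\sup_{|z|\le\delta t}$ instead of a point evaluation. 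One then checks that the true backward cone through $(x,t)$ lies inside the $\delta$-enlarged frozen cone, so all the integrals make sense and the support/localisation statement is preserved; that the kernel $V$ built from $(\lambda_1^0,\lambda_2^0)$ still dominates the relevant integrals after translating by $O(\delta t)$, using that $e^{-a y^2/(1+t)}$ is stable under such shifts up to changing the constant $a$; and that the finite-speed-of-propagation argument is self-consistent within the bootstrap.

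\textbf{Main obstacle.}
I expect the hardest part to be the second item above combined with the derivative count in $S$: because the source terms contain $\partial_x^2 q$, $\partial_{xt}^2 q$, $\partial_t q$ (all at the top allowed order $k+n=2$), the Duhamel estimate for $\partial_t^n\partial_x^k$ with $k+n=2$ needs control of $S$ at order $2$, i.e. of two-derivative quantities, so there is no room to spare and one must be careful that the smallness factor $O(\varepsilon)$ genuinely multiplies the full two-derivative norm (no hidden loss). This is what pins down the exponents $\gamma(k,n) = 1-\nu$ for $k+n \ge 1$ (except $\gamma(0,1) = \tfrac32 - \nu$, which benefits from the extra $t$-decay of $\partial_t V$): the time-convolution of the worst reference decay $(1+t)^{-1/2}$ (from $\gamma(0,0)$) against the kernel's $(1+t-s)^{-1/2-k/2-n}$ is precisely borderline at these orders, forcing the $\nu$-loss, while at order $0$ the convolution still converges sharply and one recovers $\gamma(0,0)=\tfrac12$. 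Verifying that these convolution bookkeeping steps all close simultaneously for every $(k,n)$ with $k+n\le 2$, with a single choice of $\delta, \varepsilon_0$, is the crux of the proof.
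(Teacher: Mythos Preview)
Your bootstrap framework, the use of the Duhamel representation from Proposition~\ref{dwesol}, the convolution lemmas of Section~\ref{sect3vf}, and the widening of the light cone by $\delta$ all match the paper's approach for orders $k+n\le 1$. There is, however, a genuine gap at the top order $k+n=2$ that your ``Main obstacle'' paragraph misdiagnoses. You write that the second-derivative Duhamel estimate ``needs control of $S$ at order $2$, i.e.\ of two-derivative quantities, so there is no room to spare''. In fact the situation is worse: when you differentiate
\[
\int_0^t \int_{\lambda_2^0(t-s)}^{\lambda_1^0(t-s)} V(y,t-s)\,S(x-y,s)\,dy\,ds
\]
once (in $t$ or in $x$ after an integration by parts) you produce boundary terms of the form $\int_0^t V(\lambda_i^0(t-s),t-s)\,S(x-\lambda_i^0(t-s),s)\,ds$. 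Differentiating these a second time forces a derivative onto $S$, and since $S$ already contains $(\lambda_1\lambda_2-\lambda_1^0\lambda_2^0)\partial_x^2 v$ and $(\lambda_1+\lambda_2-\lambda_1^0-\lambda_2^0)\partial_{xt}^2 v$, you now need \emph{third} derivatives of $v,q$, which are not in your bootstrap. No smallness of $\varepsilon$ rescues this; it is an honest loss of derivatives, and the paper states this explicitly at the start of Step~6 of its proof.

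The paper's remedy is the content of Section~\ref{ss33v4}: a nonlinear change of coordinates $(x,t)\mapsto(y,\nu)=(x+\tilde h_1,\,t+\tilde h_2)$, built from the transport equations $(\partial_t+\lambda_j\partial_x)H_j=\lambda_j^0-\lambda_j$, which straightens the true characteristics to the constant speeds $\lambda_1^0,\lambda_2^0$. In the new variables the function $g=v\circ\pi$ satisfies a constant-coefficient damped wave equation whose source $-(R_1\circ\pi)\partial_\nu g-(R_2\circ\pi)\partial_y g$ involves only \emph{first} derivatives of $g$ and at most first derivatives of $\lambda_1,\lambda_2$ (hence at most second derivatives of $q,v$). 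The Duhamel argument for second derivatives is then run in $(y,\nu)$ without loss, and the estimates are transferred back to $(x,t)$ at the cost of shrinking $a,b$ and enlarging $\delta$ slightly. This change-of-variable step is the missing ingredient in your plan; without it the bootstrap at $k+n=2$ does not close.
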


We start by introducing some quantities that we want to use to estimate $u$
and $\rho$. For $\mu_1 > 0, \mu_2 < 0, \gamma > 0, a > 0$ and $w \in
L^1_{\tmop{loc}} (\mathbb{R}, \mathbb{R}^+)$, we introduce the quantity
\[ F_{a, \gamma, w}^{\mu_1, \mu_2} (x, t) \assign \frac{1}{(1 + t)^{\gamma}}
   \int_{\mu_2 t}^{\mu_1 t} e^{- a \frac{y^2}{1 + t}} w (x - y) d y. \]
Remark that in Proposition \ref{constlinstab}, the functions $\partial_t^n
\partial_x^k (u (x - \lambda_{\ast} t, t))$ and $\partial_t^n \partial_x^k
(\rho (x - \lambda_{\ast} t, t))$ are estimated in part by $F_L \assign
F^{\lambda_1^0, \lambda_2^0}_{a_L, \frac{1}{2} + \frac{k}{2} + n, | u_i | + |
\rho_i |}$. The quantity $F_L$ can not estimate these functions in the
nonlinear case. For instance, it only ``see'' the initial data on
$[\lambda_2^0 t, \lambda_1^0 t]$, whereas the true caracteristic speeds
$\lambda_2, \lambda_1$ can for instance be such that $\lambda_2 < \lambda_2^0,
\lambda_1 > \lambda_1^0$ and thus the light cone is larger than $[\lambda_2^0
t, \lambda_1^0 t]$. It also does not take into account the fact that the speed
$\lambda_{\ast}$ is no longer constant. Despite all that, to make the estimate
work, we will only need to change the parameters of $F$. In Theorem
\ref{NLconstlocTheorem}, the quantity $F^{\lambda_1^0, \lambda_2^0}_{a_L,
\frac{1}{2} + \frac{k}{2} + n, | u_i | + | \rho_i |}$ is replaced by
$F^{\lambda_1^0 + \delta, \lambda_2^0 - \delta}_{a, \gamma (k, n), w_i}$ where
$\delta > 0$ is small, $a_L > a > 0,$ $\frac{1}{2} \leqslant \gamma (k, n)
\leqslant \frac{1}{2} + \frac{k}{2} + n$ and $w_i$ incode the initial value of
the perturbation. Taking $\lambda_1^0 + \delta, \lambda_2^0 - \delta$ for the
caracteristic allows us to have a bigger light cone than the real (non
straight) one, and having $a < a_L$ will allow us to absorb the error commited
by following the speed $\lambda_{\ast}$. There will also be a loss in decay in
time ($\gamma (k, n)$ will not be $\frac{1}{2} + \frac{k}{2} + n$ for all
values of $(k, n)$), on which we will come back later on.

The second quantity we introduce is, for $\mu \in \mathbb{R}, b > 0, \delta >
0, w \in L^1_{\tmop{loc}} (\mathbb{R}, \mathbb{R})$,
\[ G_{b, \delta, w}^{\mu} (x, t) \assign e^{- b t} \sup_{y \in [- \delta t,
   \delta t]} w (x - \mu t + y) . \]
Remark that the estimate of Proposition \ref{constlinstab} can be written as
\begin{eqnarray*}
  &  & | \partial_t^n \partial_x^k (u (x - \lambda_{\ast} t, t)) | + |
  \partial_t^n \partial_x^k (\rho (x - \lambda_{\ast} t, t)) |\\
  & \leqslant & K_{n, k} \left( F^{\lambda_1^0, \lambda_2^0}_{a_L,
  \frac{1}{2} + \frac{k}{2} + n, w} + G_{b_L, 0, w}^{\lambda_1^0} + G_{b_L, 0,
  w}^{\lambda_2^0} \right) (x, t)
\end{eqnarray*}
with
\[ w = \sum_{j = 0}^{n + k} | \rho_i^{(j)} | + | u_i^{(j)} | . \]
As for $F$, we will not be able to take $\delta = 0$ in $G_{b, \delta,
w}^{\mu}$ for the estimate in Theorem \ref{NLconstlocTheorem} for similar
reasons. We will also need to take $0 < b < b_L$. To simplify the notation, we
introduce finally
\[ F_{a, \gamma, \delta} \assign F^{\lambda_1^0 + \delta, \lambda_2^0 -
   \delta}_{a, \gamma, w_i} \]
and
\[ G_{b, \delta} \assign G_{b, \delta, w_i}^{\lambda_1^0} + G_{b, \delta,
   w_i}^{\lambda_2^0} . \]
The estimate of Theorem \ref{NLconstlocTheorem} can be written with these
notations as
\[ | \partial_t^n \partial_x^k (u (x - \lambda_{\ast} t, t)) | + |
   \partial_t^n \partial_x^k (\rho (x - \lambda_{\ast} t, t)) | \leqslant K
   (F_{a, \gamma, \delta} (x, t) + G_{b, \delta} (x, t)) \]
for $n + k \leqslant 2$.

\

Let us make a few remarks on Theorem \ref{NLconstlocTheorem}.
\begin{itemizedot}
  \item We keep informations on the localisation of the perturbation. The
  remarks done in the linear problem also apply here.
  
  \item In the case $k = n = 0$, since $w_i \in L^1 (\mathbb{R}) \cap
  L^{\infty} (\mathbb{R})$, we have that
  \[ | \rho (x, t) | + | u (x, t) | \leqslant \frac{K \varepsilon}{\sqrt{1 +
     t}} . \]
  This is the optimal decay, that was already shown in {\cite{MR1346371}}. For
  first derivatives, we have a small loss in decay with this method (we loose
  $t^{\nu}$ but $\nu$ can be made as small as we want when $\varepsilon
  \rightarrow 0$). For second order derivatives, we have a bigger loss. See
  the sketch of the proof in subsection \ref{ss15} for more details about why,
  but we expect to only get a $t^{\nu}$ loss if we ask for the smallness on
  more derivatives on the initial data.
  
  \item We can check easily that Theorem \ref{NLconstlocTheorem} implies that
  $\rho, u \in W^{2, 1} \cap C^2 (\mathbb{R})$ for all positive time and that
  for $n + k \leqslant 2$ we have
  \[ \| \partial_t^n \partial_x^k \rho (., t) \|_{L^{\infty}} + \|
     \partial_t^n \partial_x^k u (., t) \|_{L^{\infty}} \leqslant \frac{K
     \varepsilon}{(1 + t)^{\gamma (n, k)}} \]
  as well as
  \[ \| \partial_t^n \partial_x^k \rho (., t) \|_{L^1} + \| \partial_t^n
     \partial_x^k u (., t) \|_{L^1} \leqslant \frac{K \varepsilon}{(1 +
     t)^{\gamma (n, k) - 1 / 2}} . \]
  By interpolation we can deduce $L^p$ estimates on these quantities.
  
  \item On the initial perturbation, we only require for it to be small in
  $W^{2, 1} \cap C^2$, which is a larger space than the space of perturbations
  in {\cite{MR1346371}}. However, we do not have an equivalent when $t
  \rightarrow + \infty$, only a bound on the error. This allows us to sidestep
  the difficulty of finding the first order, which, for such a general initial
  perturbation, might be difficult.
  
  \item Remark that the estimates are given on $\rho (x - \lambda_{\ast} t,
  t)$ and $u (x - \lambda_{\ast} t, t)$, while the speed $\lambda_{\ast}$ is
  connected to the linear problem and not the nonlinear one. The ``true''
  speed of the perturbation should be $\lambda_{\ast} + \Omega_1$ for $\rho$,
  with $| \Omega_1 | \leqslant \frac{\varepsilon}{\sqrt{1 + t}}$. Such an
  error is absorbed by the form of the estimate we took. Indeed, we can show
  that
  \[ F_{a, \gamma, w}^{\mu_1, \mu_2} (x + \kappa (x, t), t) \leqslant K F_{a',
     \gamma, w}^{\mu_1 + \delta, \mu_2 - \delta} (x, t) \]
  for some $K, a' < a, \delta > 0$ if $| \kappa | \leqslant \varepsilon
  \sqrt{1 + t}$. We can check that $a > 0$ defined in Theorem
  \ref{NLconstlocTheorem} will be smaller than $a_L > 0$ from the linear case,
  to take into account this kind of error. In other words, we had to change
  the parameters in $F$ and $G$ to take into account the nonlinear
  characteristic speed, but also to absorb the error committed by the fact that
  $\lambda_{\ast}$ is not exactly the right speed of the error.
  
  \item We could be more specific in what derivatives are needed on the
  initial condition to estimate $\rho, u$ and its derivatives. That is,
  instead of considering $w_i$ that regroups all derivatives up to $2$ of the
  initial data, we could consider only a part of it in the case $k + n
  \leqslant 1$. We do not focus on this here, to simplify some part of the
  proof and some notations.
\end{itemizedot}

We will sketch the proof of Theorem \ref{NLconstlocTheorem} in the next
subsection. In this paper, we looked specifically at the Aw-Rascle-Zhang model
rather than the general case for several reason. The model in itself is
interesting and has been studied mathematically with different approach and
question. For instance, the limit $\tau \rightarrow 0$ has been studied in
{\cite{MR3901913}}. This also allow us to simplify some notations without
losing too much generality. For other hyperbolic systems, Theorem
\ref{NLconstlocTheorem} should hold, with some technical conditions. For
instance, it will be important that $\lambda_1, \lambda_2$, the two nonlinear
characteristic speeds, can depend on $\rho, u$ but not on their derivatives.

\subsection{Plan of the proofs}\label{ss15}

We start by solving, in section \ref{sdampedwave}, the equation
\begin{equation}
  \partial_t^2 f + (\lambda_1 + \lambda_2) \partial_{x t}^2 f + \lambda_1
  \lambda_2 \partial_x^2 f + \delta \partial_t f = S, \label{150}
\end{equation}
see Proposition \ref{dwesol} and Lemma \ref{Vest}. The solution of
$\partial_t^2 u - \partial_x^2 u + \partial_t u = S$ is well known, and is
connected to (\ref{150}) by a change of variables. The estimates shown in
Lemma \ref{Vest} are a consequence of usual computations on Bessel functions.

Section \ref{sect3vf} is devoted to estimates on $F_{a, \gamma, w}^{\mu_1,
\mu_2}$ and $G_{b, \delta, w}^{\mu}$, in particular, their interaction with
the kernel of the damped wave equation. The contribution of the source $S$ in
the damped wave equation is
\begin{equation}
  \int_0^t \int_{\lambda_2^0 (t - s)}^{\lambda_1^0 (t - s)} V (y, t - s) S (x
  - y, s) d y d s, \label{160}
\end{equation}
and we compute this quantity and its derivatives with respect to $x$ and $t$
in the case $| S | \leqslant \alpha F_{a, \gamma, w}^{\lambda_1^0 + \delta,
\lambda^0_2 - \delta} + \beta G_{b, \delta, w}^{\mu}$ for $\alpha, \beta > 0,
\mu \in \{ \lambda_1^0, \lambda^0_2 \}$. The key point is that for such a
value of $S$, the quantity (\ref{160}) and its derivatives can be estimates by
terms that are also of the form $\alpha' F_{a, \gamma, w}^{\lambda^0_1 +
\delta, \lambda^0_2 - \delta} + \beta' G_{b, \delta, w}^{\mu}$ with $\alpha',
\beta' > 0$.

\

Then, in section \ref{ss33v4}, we compute a change of variable to deal with
the fact that the caracteristic speeds $\lambda_1$ and $\lambda_2$ are not
constant. It will be used to estimate second derivatives of the perturbation.
However, this will not be used for the estimate of the first and non
derivative, and most likely it is not possible to do so. Since $\lambda_1$ and
$\lambda_2$ remain close to constants $\lambda_1^0, \lambda_2^0$, we will do a
change of coordinate close to the identity, both in time and space, to change
the caracteristic speeds to $\lambda_1^0, \lambda_2^0$.

\

Section \ref{sec5vf} is devoted to the proof of Proposition
\ref{constlinstab}. This is done using Proposition \ref{dwesol} and Lemma
\ref{Vest}.

\

Finally, section \ref{sec6vf} is devoted to the proof of Theorem
\ref{NLconstlocTheorem}. We treat the nonlinear term perturbatively, using
Proposition \ref{dwesol}. We show the estimates by a bootstrap. The result
hold for small time by standard hyperbolic theory. Then, using the results in
section \ref{sect3vf}, we show that they hold for all times for $\rho, u$ and
their first derivatives.

For the second derivative, we use section \ref{ss33v4} to write the equation
on a similar form, but this time the terms in $S$ do not contain second
derivatives of $\rho$ and $u$. Otherwise, we will have a loss of derivatives.
Still using section \ref{sect3vf} we compute that the estimates hold for these
new variables. Then, we show that we can get these estimates back in the
original coordinates with the quantities $F_{a, \gamma, w}^{\mu_1, \mu_2}$ and
$G_{b, \delta, w}^{\mu}$, up to reducing the value of $a$ and $b$ a little.

\

\begin{acknowledgments*}
  The authors are supported by Tamkeen under the NYU Abu Dhabi Research
  Institute grant CG002.
\end{acknowledgments*}

\section{Solution to the general damped wave equation}\label{sdampedwave}

This section is devoted to the proof of Proposition \ref{dwesol} and Lemma
\ref{Vest}. We suppose here that $\lambda_1 > 0, \lambda_2 < 0, \delta > 0$.

\subsection{Proof of Proposition \ref{dwesol}}

We recall the equation we are trying to solve, which is
\[ \left\{\begin{array}{l}
     \partial_t^2 f + (\lambda_1 + \lambda_2) \partial_{x t}^2 f + \lambda_1
     \lambda_2 \partial_x^2 f + \delta \partial_t f = S (x, t)\\
     f_{| t = 0 \nobracket} = f_0\\
     \partial_t f_{| t = 0 \nobracket} = f_1 .
   \end{array}\right. \]

\subsubsection{Solution to the classical damped wave equation}

We recall the definition of $I_0$, the modified Bessel function of the first
kind and of order $0$:
\[ I_0 (y) \assign \sum_{n = 0}^{+ \infty} \frac{1}{n! \Gamma (n + 1)} \left(
   \frac{y}{2} \right)^{2 n} . \]
We will recall some of its properties in subsection \ref{sssBessel}. It
appears in the solution of the classical damped wave equation on $\mathbb{R}$:
\begin{equation}
  \partial_t^2 u - \partial_x^2 u + \partial_t u = S, u_{| t = 0 \nobracket} =
  u_0, \partial_t u_{| t = 0 \nobracket} = u_1 . \label{cdwe}
\end{equation}
\begin{theorem}[{\cite{courant2008methods}}]
  \label{CH}Consider the function
  \begin{equation}
    J (t, y) \assign \frac{e^{- t / 2}}{2} I_0 \left( \frac{1}{2} \sqrt{t^2 -
    y^2} \right) .
  \end{equation}
  The solution to
  \[ \partial_t^2 u - \partial_x^2 u + \partial_t u = S, u_{| t = 0
     \nobracket} = u_0, \partial_t u_{| t = 0 \nobracket} = u_1 \]
  is
  \begin{eqnarray*}
    u (x, t) & = & \int_{- t}^t J (t, y) u_1 (x - y) d y\\
    & + & \int_{- t}^t J (t, y) u_0 (x - y) d y + \partial_t \left( \int_{-
    t}^t J (t, y) u_0 (x - y) d y \right)\\
    & + & \int_0^t \left( \int_{| y | \leqslant t - s} J (t - s, y) S (x - y,
    s) d y \right) d s.
  \end{eqnarray*}
\end{theorem}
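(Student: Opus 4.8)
The plan is to remove the damping term by an exponential change of unknown, solve the resulting Klein--Gordon-type equation with its classical Bessel kernel, and then transform back. Set $w \assign e^{t/2}u$, so that $u = e^{-t/2}w$. A direct computation gives $\partial_t^2 u - \partial_x^2 u + \partial_t u = e^{-t/2}(\partial_t^2 w - \partial_x^2 w - \tfrac14 w)$, hence $w$ solves $\partial_t^2 w - \partial_x^2 w - \tfrac14 w = \tilde S$ with $\tilde S \assign e^{t/2}S$ and Cauchy data $w_{|t=0} = u_0$, $\partial_t w_{|t=0} = u_1 + \tfrac12 u_0$. Note also that, with $E(t,y) \assign \tfrac12 I_0(\tfrac12\sqrt{t^2-y^2})$ defined on $\{|y|\le t\}$, one has $J(t,y) = e^{-t/2}E(t,y)$.

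Next I would solve the equation for $w$ by Fourier transform in $x$: $\widehat w$ solves $\partial_t^2\widehat w + (\xi^2-\tfrac14)\widehat w = \widehat{\tilde S}$, so
\[ \widehat w(\xi,t) = \widehat{u_0}(\xi)\cos(t\sqrt{\xi^2-1/4}) + \Bigl(\widehat{u_1}(\xi)+\tfrac12\widehat{u_0}(\xi)\Bigr)\frac{\sin(t\sqrt{\xi^2-1/4})}{\sqrt{\xi^2-1/4}} + \int_0^t \widehat{\tilde S}(\xi,s)\,\frac{\sin((t-s)\sqrt{\xi^2-1/4})}{\sqrt{\xi^2-1/4}}\,ds, \]
the apparent singularity at $\xi^2 = \tfrac14$ being harmless since $z\mapsto\cos\sqrt z$ and $z\mapsto\sin\sqrt z/\sqrt z$ are entire functions of $z$. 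The only nontrivial input is the classical identity $\mathcal F^{-1}_\xi[\sin(t\sqrt{\xi^2+a^2})/\sqrt{\xi^2+a^2}](y) = \tfrac12 J_0(a\sqrt{t^2-y^2})\,\mathbbm{1}_{\{|y|\le t\}}$; specialising to $a = i/2$ (i.e. $a^2 = -\tfrac14$) and using $J_0(iz) = I_0(z)$ gives exactly $E(t,y)$, while the cosine multiplier inverts to $\partial_t E(t,\cdot)$. Undoing the Fourier transform (a convolution in $x$) then yields
\[ w(x,t) = \partial_t\!\int_{-t}^t E(t,y)u_0(x-y)\,dy + \int_{-t}^t E(t,y)(u_1+\tfrac12 u_0)(x-y)\,dy + \int_0^t\!\!\int_{|y|\le t-s} E(t-s,y)\tilde S(x-y,s)\,dy\,ds. \]
Equivalently one could produce this by Riemann's method --- the Riemann function of the constant-coefficient self-adjoint operator $\partial_t^2 - \partial_x^2 - \tfrac14$ is precisely this Bessel kernel, as in Courant--Hilbert --- or simply verify the displayed formula directly, using the Bessel equation $I_0''(z) + z^{-1}I_0'(z) - I_0(z) = 0$ and $I_0(0) = 1$ to check that it solves the PDE inside the light cone and matches the data across it.

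Finally I would transform back. Substituting $u = e^{-t/2}w$, $\tilde S = e^{t/2}S$, and using $e^{-t/2}E(t,y) = J(t,y)$ and $e^{-(t-s)/2}E(t-s,y) = J(t-s,y)$: the source term becomes $\int_0^t\int_{|y|\le t-s} J(t-s,y)S(x-y,s)\,dy\,ds$ directly; the second term becomes $\int_{-t}^t J(t,y)u_1(x-y)\,dy + \tfrac12\int_{-t}^t J(t,y)u_0(x-y)\,dy$; and, writing $\Phi(x,t)\assign\int_{-t}^t J(t,y)u_0(x-y)\,dy$, the first term equals $e^{-t/2}\partial_t(e^{t/2}\Phi) = \tfrac12\Phi + \partial_t\Phi$. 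Collecting the two copies of $\tfrac12\Phi$ into a single $\Phi = \int_{-t}^t J(t,y)u_0(x-y)\,dy$ reproduces exactly the four terms in the statement.

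The genuinely delicate points are two. First, pinning down the Bessel kernel: either the distributional inverse Fourier transform above, or the equivalent identification of $J$ as the Riemann function, both resting on the $I_0$ ODE. Second, the regularity bookkeeping near the light cone: $J$ is smooth on $\{|y|\le t\}$ but does not vanish there (indeed $J(t,\pm t) = \tfrac12 e^{-t/2}$), so differentiating the integrals in $t$ produces moving-boundary contributions $\tfrac12 e^{-t/2}(f(x-t) + f(x+t))$ that must be carried along when checking the equation and the initial conditions. With $u_0, u_1, S \in C^2_{\tmop{loc}}$ one differentiates under the integral sign and along the characteristics to confirm a $C^2$ solution, the finite speed of propagation making the purely local hypotheses on the data sufficient; uniqueness is the standard energy estimate for the damped wave equation. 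Everything else is routine.
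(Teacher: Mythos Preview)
Your argument is correct. The exponential substitution $w=e^{t/2}u$, the Fourier identification of the Klein--Gordon kernel with $E(t,y)=\tfrac12 I_0(\tfrac12\sqrt{t^2-y^2})$, and the bookkeeping when multiplying back by $e^{-t/2}$ all check out; the recombination of the two $\tfrac12\Phi$ terms into a single $\int J u_0$ is exactly what is needed.

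The paper, however, does not go through this reduction. It takes the kernel $J$ as given, verifies directly that $\partial_t^2 J+\partial_t J-\partial_y^2 J=0$ (using the Bessel ODE for $I_0$), and then checks by hand that $\int_{-t}^t J(t,y)u_1(x-y)\,dy$ solves the damped wave equation with data $(0,u_1)$; the $u_0$ and $S$ contributions are then built from this building block and added by linearity. In other words, the paper's proof is a pure verification of a formula it already knows, whereas your route actually \emph{derives} the formula by reducing to the Klein--Gordon case. Your approach explains where the Bessel kernel comes from and generalises cleanly to other constant-coefficient perturbations of the wave operator; the paper's approach is shorter once the answer is in hand and avoids any distribution-theoretic or Fourier-analytic machinery (in particular it needs no analytic continuation in the mass parameter and no discussion of the inverse Fourier transform as a distribution supported on the light cone). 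You yourself note the direct-verification alternative, which is essentially what the paper does.
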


We recall briefly the proof of Theorem \ref{CH}. We first check that
\[ \partial_t^2 J + \partial_t J - \partial_z^2 J = 0. \]
Then, we can solve the problem if $u_0 = S = 0$, simply by computing
\[ (\partial_t^2 - \partial_x^2 + \partial_t) \left( \int_{- t}^t J (t, y) u_1
   (x - y) d y \right) = 0. \]
and checking that at $t = 0$, we have the right initial condition. Then, using
this function, we construct the parts coming fom $u_0$ and $S$, and we add
them together by linearity.

\subsubsection{From the classical to the general damped wave equation}

Our goal here is to find the function $V$ that satisfies for the general
damped wave equation (\ref{dwe}) the same role that $J$ satisfies for the
classical damped wave equation (\ref{cdwe}). We define $a > 0, b \in
\mathbb{R}, c > 0$ and the function $V$ by
\[ J (y, t) = V (a y, b y + c t) . \]
We recall that $\partial_t^2 J + \partial_t J - \partial_y^2 J = 0$, and we
compute that
\[ \partial_t J = c \partial_t V, \partial_t^2 J = c^2 \partial_t^2 V \]
\[ \partial_y J = a \partial_y V + b \partial_t V, \partial_y^2 J = a^2
   \partial_y^2 V + 2 a b \partial_{y t}^2 V + b^2 \partial_t^2 V \]
hence
\[ \partial_t^2 V - \frac{2 a b}{c^2 - b^2} \partial_{y t}^2 V -
   \frac{a^2}{c^2 - b^2} \partial_y^2 V + \frac{c}{c^2 - b^2} \partial_t V =
   0. \]
We then look for solutions of
\[ \frac{- 2 a b}{c^2 - b^2} = \beta (= \lambda_1 + \lambda_2), \frac{a^2}{c^2
   - b^2} = \alpha (= - \lambda_1 \lambda_2 > 0), \frac{c}{c^2 - b^2} = \delta
   > 0. \]
We find
\[ a = \frac{1}{\delta} \sqrt{\alpha + \frac{\beta^2}{4}}, b = \frac{-
   \beta}{2 \alpha \delta} \sqrt{\alpha + \frac{\beta^2}{4}}, c =
   \frac{1}{\alpha \delta} \left( \alpha + \frac{\beta^2}{4} \right) . \]
We deduce that the function
\[ V (y, t) = \frac{1}{\lambda_1 - \lambda_2} J \left( \frac{y}{a}, \frac{a t
   - b y}{a c} \right) \]
satisfies the equation
\[ \partial_t^2 V + (\lambda_1 + \lambda_2) \partial_{y t}^2 V + \lambda_1
   \lambda_2 \partial_y^2 V + \delta \partial_t V = 0. \]
The factor $\frac{1}{\lambda_1 - \lambda_2}$ is here to have a cleaner formula
in Proposition \ref{dwesol}. We compute
\[ \frac{a t - b z}{a c} = \frac{\delta}{\alpha + \frac{\beta^2}{4}} \left(
   \alpha t + \frac{\beta}{2} z \right) \]
and therefore
\[ V (y, t) = \frac{e^{- \frac{\delta \left( \alpha t + \frac{\beta}{2} y
   \right)}{2 \left( \alpha + \frac{\beta^2}{4} \right)}}}{\lambda_1 -
   \lambda_2} I_0 \left( \frac{\delta}{2} \sqrt{\left( \frac{1}{\alpha +
   \frac{\beta^2}{4}} \left( \alpha t + \frac{\beta}{2} y \right) \right)^2 -
   \left( \frac{y}{\sqrt{\alpha + \frac{\beta^2}{4}}} \right)^2} \right) . \]
After some simplifications, using $\alpha = - \lambda_1 \lambda_2, \beta =
\lambda_1 + \lambda_2, \alpha + \frac{\beta^2}{4} = \frac{1}{4} (\lambda_1 -
\lambda_2)^2$, we find that
\[ V (y, t) = \frac{e^{- \frac{2 \delta}{(\lambda_1 - \lambda_2)^2} \left( -
   \lambda_1 \lambda_2 t + \frac{\lambda_1 + \lambda_2}{2} y
   \right)}}{\lambda_1 - \lambda_2} I_0 \left( \frac{2 \delta \sqrt{-
   \lambda_1 \lambda_2}}{(\lambda_1 - \lambda_2)^2} \sqrt{- (y - \lambda_1 t)
   (y - \lambda_2 t)} \right) . \]
Let us summarize our computations so far, and give as well some specific
values of $V$.

\begin{lemma}
  \label{L24}The function
  \[ V (y, t) = \frac{e^{- \frac{2 \delta}{(\lambda_1 - \lambda_2)^2} \left( -
     \lambda_1 \lambda_2 t + \frac{\lambda_1 + \lambda_2}{2} y
     \right)}}{\lambda_1 - \lambda_2} I_0 \left( \frac{2 \delta \sqrt{-
     \lambda_1 \lambda_2}}{(\lambda_1 - \lambda_2)^2} \sqrt{- (y - \lambda_1
     t) (y - \lambda_2 t)} \right) \]
  satisfies
  \[ \partial_t^2 V + (\lambda_1 + \lambda_2) \partial_{y t}^2 V + \lambda_1
     \lambda_2 \partial_y^2 V + \delta \partial_t V = 0. \]
  We also compute
  \[ V (\lambda_1 t, t) = \frac{e^{\frac{- \delta \lambda_1 t}{\lambda_1 -
     \lambda_2}}}{\lambda_1 - \lambda_2}, V (\lambda_2 t, t) =
     \frac{e^{\frac{\delta \lambda_2 t}{\lambda_1 - \lambda_2}}}{\lambda_1 -
     \lambda_2} \]
  and
  \[ \partial_t V (\lambda_1 t, t) = \frac{2 \delta \lambda_1
     \lambda_2}{(\lambda_1 - \lambda_2)^2} \left( 1 - \frac{I_0'' (0) \delta
     \lambda_1 t}{\lambda_1 - \lambda_2} \right) V (\lambda_1 t, t), \]
  \[ \partial_t V (\lambda_2 t, t) = \frac{2 \delta \lambda_1
     \lambda_2}{(\lambda_1 - \lambda_2)^2} \left( 1 + \frac{I_0'' (0) \delta
     \lambda_2 t}{\lambda_1 - \lambda_2} \right) V (\lambda_1 t, t) \]
  as well as
  \[ \partial_y V (\lambda_1 t, t) = \left( \frac{- \delta (\lambda_1 +
     \lambda_2)}{(\lambda_1 - \lambda_2)^2} + \frac{2 \delta^2 \lambda_1
     \lambda_2 t I_0'' (0)}{(\lambda_1 - \lambda_2)^3} \right) V (\lambda_1 t,
     t), \]
  \[ \partial_y V (\lambda_2 t, t) = \left( \frac{- \delta (\lambda_1 +
     \lambda_2)}{(\lambda_1 - \lambda_2)^2} - \frac{2 \delta^2 \lambda_1
     \lambda_2 t I_0'' (0)}{(\lambda_1 - \lambda_2)^3} \right) V (\lambda_2 t,
     t) . \]
\end{lemma}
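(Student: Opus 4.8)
The plan is that most of this lemma is already done and the remainder is a direct differentiation, with one small regularity subtlety to handle cleanly. Indeed, the first assertion — that $V$ solves $\partial_t^2 V + (\lambda_1+\lambda_2)\partial_{yt}^2 V + \lambda_1\lambda_2\partial_y^2 V + \delta\partial_t V = 0$ — requires nothing new: it is exactly the output of the change-of-variables computation carried out just above, where it was shown that $V(y,t)=\tfrac{1}{\lambda_1-\lambda_2}J\big(\tfrac{y}{a},\tfrac{at-by}{ac}\big)$ satisfies this equation, and then this expression was simplified into the stated closed form using $\alpha=-\lambda_1\lambda_2$, $\beta=\lambda_1+\lambda_2$ and $\alpha+\tfrac{\beta^2}{4}=\tfrac14(\lambda_1-\lambda_2)^2$. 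So I would simply record it and pass to the six pointwise identities.

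The point that needs care is that the argument $\sqrt{-(y-\lambda_1 t)(y-\lambda_2 t)}$ of $I_0$ is only Lipschitz, not $C^1$, at the endpoints $y=\lambda_1 t$ and $y=\lambda_2 t$. This is harmless because $I_0$ is even: from its power series, $I_0(z)=\widetilde{I_0}(z^2)$ for an entire function $\widetilde{I_0}$ with $\widetilde{I_0}(0)=1$ and $\widetilde{I_0}'(0)=\tfrac12 I_0''(0)$ (in fact $\tfrac14$). Hence, setting $\kappa\assign\tfrac{2\delta\sqrt{-\lambda_1\lambda_2}}{(\lambda_1-\lambda_2)^2}$, $P(y,t)\assign-(y-\lambda_1 t)(y-\lambda_2 t)$ and $E(y,t)\assign\exp\big(-\tfrac{2\delta}{(\lambda_1-\lambda_2)^2}(-\lambda_1\lambda_2 t+\tfrac{\lambda_1+\lambda_2}{2}y)\big)$, we can write $V=\tfrac{1}{\lambda_1-\lambda_2}\,E\,\widetilde{I_0}(\kappa^2 P)$, which is manifestly smooth on $\{t\ge 0,\ \lambda_2 t\le y\le\lambda_1 t\}$ since $P$ is a polynomial. (This incidentally also yields the smoothness statement of Lemma~\ref{Vest}, though here only the behavior near the endpoints is needed.)

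The remaining identities then follow by the product and chain rules: $\partial_t V=\tfrac{1}{\lambda_1-\lambda_2}\big(\partial_t E\cdot\widetilde{I_0}(\kappa^2 P)+\kappa^2\,\partial_t P\cdot E\,\widetilde{I_0}'(\kappa^2 P)\big)$ and similarly for $\partial_y V$, after which one specializes to $y=\lambda_1 t$ or $y=\lambda_2 t$ and uses $P=0$, $\widetilde{I_0}(0)=1$, $\widetilde{I_0}'(0)=\tfrac12 I_0''(0)$, together with $\partial_t E=\tfrac{2\delta\lambda_1\lambda_2}{(\lambda_1-\lambda_2)^2}E$, $\partial_y E=-\tfrac{\delta(\lambda_1+\lambda_2)}{(\lambda_1-\lambda_2)^2}E$, $\partial_t P=\lambda_1(y-\lambda_2 t)+\lambda_2(y-\lambda_1 t)$ and $\partial_y P=-2y+(\lambda_1+\lambda_2)t$. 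For instance, at $y=\lambda_1 t$ this gives $\partial_t P=\lambda_1(\lambda_1-\lambda_2)t$, $\partial_y P=-(\lambda_1-\lambda_2)t$, and $V(\lambda_1 t,t)=\tfrac{1}{\lambda_1-\lambda_2}E(\lambda_1 t,t)$, whose exponent simplifies via $-\lambda_1\lambda_2 t+\tfrac{\lambda_1+\lambda_2}{2}\lambda_1 t=\tfrac{\lambda_1(\lambda_1-\lambda_2)}{2}t$ to $-\tfrac{\delta\lambda_1 t}{\lambda_1-\lambda_2}$; the case $y=\lambda_2 t$ is symmetric. Collecting terms, factoring out $V(\lambda_i t,t)$, and cancelling the powers of $\lambda_1-\lambda_2$ and of $\delta$ (using $\kappa^2=4\delta^2(-\lambda_1\lambda_2)/(\lambda_1-\lambda_2)^4$) leaves precisely the displayed formulas.

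There is no genuine obstacle here: the statement is essentially a bookkeeping exercise. The one thing to get right is the evenness argument that makes $V$ honestly smooth up to the edge $y=\lambda_i t$ of its support — so that differentiating and then evaluating there is legitimate — and then tracking the constants carefully through the algebra. I expect the write-up to take well under half a page.
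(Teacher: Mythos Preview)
Your proposal is correct and follows essentially the same approach as the paper: both rely on the change-of-variables computation already done above for the PDE, then differentiate the product form $V=\tfrac{1}{\lambda_1-\lambda_2}e^{\gamma_1}I_0(\gamma_2)$ and evaluate at the endpoints using $\gamma_2=0$ there. Your device of writing $I_0(z)=\widetilde{I_0}(z^2)$ is a slightly cleaner packaging of what the paper does when it replaces $\partial_t\gamma_2\cdot I_0'(\gamma_2)$ at $\gamma_2=0$ by $\tfrac12\partial_t(\gamma_2^2)\,I_0''(0)$, but the substance is identical.
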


Remark that $\lambda_1 - \lambda_2 > 0$ and $\frac{\delta \lambda_2}{\lambda_1
- \lambda_2} < 0$.

\begin{proof}
  We complete the above computations with the following particular values of
  $V$. We define
  \[ \gamma_1 (y, t) \assign \frac{- 2 \delta}{(\lambda_1 - \lambda_2)^2}
     \left( - \lambda_1 \lambda_2 t + \frac{\lambda_1 + \lambda_2}{2} y
     \right) \]
  and
  \[ \gamma_2 (y, t) \assign \frac{2 \delta \sqrt{- \lambda_1
     \lambda_2}}{(\lambda_1 - \lambda_2)^2} \sqrt{- (y - \lambda_1 t) (y -
     \lambda_2 t)} \]
  so that
  \[ V (y, t) = \frac{e^{\gamma_1 (y, t)}}{\lambda_1 - \lambda_2} I_0
     (\gamma_2 (y, t)) . \]
  We compute
  \[ \partial_t \gamma_1 = \frac{2 \delta \lambda_1 \lambda_2}{(\lambda_1 -
     \lambda_2)^2}, \partial_y \gamma_1 = \frac{- \delta (\lambda_1 +
     \lambda_2)}{(\lambda_1 - \lambda_2)^2} \]
  and
  \[ \partial_t \gamma_2 = \frac{(\lambda_1 + \lambda_2) y - 2 \lambda_1
     \lambda_2 t}{2 \gamma_2} \left( \frac{- 4 \delta^2 \lambda_1
     \lambda_2}{(\lambda_1 - \lambda_2)^4} \right) \]
  as well as
  \[ \partial_y \gamma_2 = \frac{- 2 y + (\lambda_1 + \lambda_2) t}{2
     \gamma_2} \left( \frac{- 4 \delta^2 \lambda_1 \lambda_2}{(\lambda_1 -
     \lambda_2)^4} \right) . \]
  We compute
  \[ \gamma_2 (\lambda_1 t, t) = \gamma_2 (\lambda_2 t, t) = 0, \]
  \[ \gamma_1 (\lambda_1 t, t) = \frac{- \delta \lambda_1 t}{\lambda_1 -
     \lambda_2} \leqslant 0, \gamma_1 (\lambda_2 t, t) = \frac{\delta
     \lambda_2 t}{\lambda_1 - \lambda_2} \leqslant 0, \]
  therefore
  \[ V (\lambda_1 t, t) = \frac{e^{\gamma_1 (\lambda_1 t, t)}}{\lambda_1 -
     \lambda_2} I_0 (\gamma_2 (\lambda_1 t, t)) = \frac{e^{\frac{- \delta
     \lambda_1 t}{\lambda_1 - \lambda_2}}}{\lambda_1 - \lambda_2} \]
  and
  \[ V (\lambda_2 t, t) = \frac{e^{\gamma_1 (\lambda_2 t, t)}}{\lambda_1 -
     \lambda_2} I_0 (\gamma_2 (\lambda_2 t, t)) = \frac{e^{\frac{\delta
     \lambda_2 t}{\lambda_1 - \lambda_2}}}{\lambda_1 - \lambda_2} . \]
  Furthermore, since
  \[ V (y, t) = \frac{e^{\gamma_1 (y, t)}}{\lambda_1 - \lambda_2} I_0
     (\gamma_2 (y, t)), \]
  we have
  \begin{eqnarray*}
    \partial_t V (y, t) & = & \partial_t \gamma_1 (y, t) V (y, t) +
    \frac{e^{\gamma_1 (y, t)}}{\lambda_1 - \lambda_2} \partial_t \gamma_2 (y,
    t) I_0' (\gamma_2 (y, t)),
  \end{eqnarray*}
  and since $\gamma_2 (\lambda_1 t, t) = 0$, we compute that
  \begin{eqnarray*}
    \partial_t V (\lambda_1 t, t) & = & \partial_t \gamma_1 (\lambda_1 t, t) V
    (\lambda_1 t, t) + \frac{e^{\gamma_1 (\lambda_1 t, t)}}{2 (\lambda_1 -
    \lambda_2)} (\partial_t (\gamma_2 (y, t)^2))_{| y = \lambda_1 t
    \nobracket} I_0'' (0)\\
    & = & \frac{2 \delta \lambda_1 \lambda_2}{(\lambda_1 - \lambda_2)^2} V
    (\lambda_1 t, t) + \frac{I_0'' (0)}{2} \left( \frac{- 4 \delta^2
    \lambda_1^2 \lambda_2 t}{(\lambda_1 - \lambda_2)^3} \right) V (\lambda_1
    t, t)\\
    & = & \frac{2 \delta \lambda_1 \lambda_2}{(\lambda_1 - \lambda_2)^2}
    \left( 1 - \frac{I_0'' (0) \delta \lambda_1 t}{\lambda_1 - \lambda_2}
    \right) V (\lambda_1 t, t)
  \end{eqnarray*}
  as well as
  \begin{eqnarray*}
    \partial_t V (\lambda_2 t, t) & = & \partial_t \gamma_1 (\lambda_2 t, t) V
    (\lambda_2 t, t) + \frac{e^{\gamma_1 (\lambda_1 t, t)}}{2 (\lambda_1 -
    \lambda_2)} (\partial_t (\gamma_2 (y, t)^2))_{| y = \lambda_2 t
    \nobracket} I_0'' (0)\\
    & = & \frac{2 \delta \lambda_1 \lambda_2}{(\lambda_1 - \lambda_2)^2} V
    (\lambda_1 t, t) + \frac{I_0'' (0)}{2} \left( \frac{4 \delta^2 \lambda_1
    \lambda^2_2 t}{(\lambda_1 - \lambda_2)^3} \right) V (\lambda_1 t, t)\\
    & = & \frac{2 \delta \lambda_1 \lambda_2}{(\lambda_1 - \lambda_2)^2}
    \left( 1 + \frac{I_0'' (0) \delta \lambda_2 t}{\lambda_1 - \lambda_2}
    \right) V (\lambda_1 t, t) .
  \end{eqnarray*}
  Similarly,
  \[ \partial_y V (\lambda_1 t, t) = \left( \partial_y \gamma_1 + \frac{1}{2}
     \partial_y (\gamma_2^2) I_0'' (0) \right)_{| y = \lambda_1 t \nobracket}
     V (\lambda_1 t, t), \]
  hence
  \[ \partial_y V (\lambda_1 t, t) = \left( \frac{- \delta (\lambda_1 +
     \lambda_2)}{(\lambda_1 - \lambda_2)^2} + \frac{2 \delta^2 \lambda_1
     \lambda_2 t I_0'' (0)}{(\lambda_1 - \lambda_2)^3} \right) V (\lambda_1 t,
     t) \]
  and we also check that
  \[ \partial_y V (\lambda_2 t, t) = \left( \frac{- \delta (\lambda_1 +
     \lambda_2)}{(\lambda_1 - \lambda_2)^2} - \frac{2 \delta^2 \lambda_1
     \lambda_2 t I_0'' (0)}{(\lambda_1 - \lambda_2)^3} \right) V (\lambda_2 t,
     t) . \]
\end{proof}

We are now equipped to compute the solution of the general damped wave
equation (\ref{dwe}).

\begin{lemma}
  \label{L26}Given $u_1 \in C^2_{\tmop{loc}} (\mathbb{R})$, the function
  \[ v (x, t) = \int_{\lambda_2 t}^{\lambda_1 t} V (y, t) u_1 (x - y) d y \]
  satisfies $v (x, 0) = 0, \partial_t v (x, 0) = u_1$ and the equation
  \[ \partial_t^2 v + (\lambda_1 + \lambda_2) \partial_{y t}^2 v + \lambda_1
     \lambda_2 \partial_y^2 v + \delta \partial_t v = 0. \]
\end{lemma}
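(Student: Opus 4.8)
The plan is to verify directly that $v$ has the announced initial data and solves the homogeneous equation, using only differentiation under the integral sign, the Leibniz rule for the moving endpoints $\lambda_2 t$ and $\lambda_1 t$, and the endpoint identities for $V$ recorded in Lemma \ref{L24}. (All differentiations are legitimate since $V$ is smooth on $\{t\geqslant 0,\ \lambda_2 t\leqslant y\leqslant\lambda_1 t\}$ by Lemma \ref{Vest} and $u_1\in C^2_{\tmop{loc}}$.) The initial conditions are immediate: at $t=0$ the integration range collapses to a point, so $v(x,0)=0$; and Leibniz gives
\[ \partial_t v(x,t) = \lambda_1 V(\lambda_1 t,t)\, u_1(x-\lambda_1 t) - \lambda_2 V(\lambda_2 t,t)\, u_1(x-\lambda_2 t) + \int_{\lambda_2 t}^{\lambda_1 t} \partial_t V(y,t)\, u_1(x-y)\, dy, \]
so that $\partial_t v(x,0) = (\lambda_1-\lambda_2)V(0,0)u_1(x) = u_1(x)$, since $V(0,0) = \frac{1}{\lambda_1-\lambda_2}$ (from Lemma \ref{L24}, or directly).

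For the equation, the key point is that every $x$-derivative lands on $u_1(x-y)$, where $\partial_x[u_1(x-y)] = -\partial_y[u_1(x-y)]$, so it can be transferred onto $V$ by integration by parts in $y$. Concretely, I differentiate the formula above once more in $t$ (applying Leibniz a second time and being careful to distinguish the partial derivative $(\partial_t V)(\lambda_i t,t)$ from the total derivative $\frac{d}{dt}[V(\lambda_i t,t)] = \lambda_i(\partial_y V)(\lambda_i t,t) + (\partial_t V)(\lambda_i t,t)$), and I compute $\partial_x\partial_t v$ and $\partial_x^2 v$ by differentiating under the (now $x$-independent) integral. In the terms $(\lambda_1+\lambda_2)\partial_x\partial_t v$ and $\lambda_1\lambda_2\partial_x^2 v$ I rewrite $u_1'(x-y) = -\partial_y[u_1(x-y)]$ and $u_1''(x-y) = \partial_y^2[u_1(x-y)]$ and integrate by parts once, resp.\ twice. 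Adding the four contributions $\partial_t^2 v + (\lambda_1+\lambda_2)\partial_x\partial_t v + \lambda_1\lambda_2\partial_x^2 v + \delta\partial_t v$, the interior integral assembles into
\[ \int_{\lambda_2 t}^{\lambda_1 t} \bigl(\partial_t^2 V + (\lambda_1+\lambda_2)\partial_{yt}^2 V + \lambda_1\lambda_2\partial_y^2 V + \delta\partial_t V\bigr)(y,t)\, u_1(x-y)\, dy = 0 \]
by the equation satisfied by $V$ (Lemma \ref{L24}).

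What remains is a finite sum of boundary terms at $y=\lambda_1 t$ and $y=\lambda_2 t$, each a multiple of $u_1(x-\lambda_i t)$ or of $u_1'(x-\lambda_i t)$, and the only real work is to check that every such coefficient vanishes; this is where Lemma \ref{L24} is used in full. The $u_1'$ coefficients cancel by elementary algebra: at $y=\lambda_1 t$ the coefficient is $V(\lambda_1 t,t)\bigl(-\lambda_1^2 + \lambda_1(\lambda_1+\lambda_2) - \lambda_1\lambda_2\bigr) = 0$, and similarly at $y=\lambda_2 t$. For the $u_1$ coefficient at $y=\lambda_1 t$ one collects $\lambda_1(\lambda_1-\lambda_2)(\partial_y V)(\lambda_1 t,t) + (\lambda_1-\lambda_2)(\partial_t V)(\lambda_1 t,t) + \delta\lambda_1 V(\lambda_1 t,t)$ and substitutes the explicit expressions for $(\partial_t V)(\lambda_1 t,t)$ and $(\partial_y V)(\lambda_1 t,t)$ from Lemma \ref{L24}: the two pieces proportional to $I_0''(0)\,t$ cancel against one another, and the remaining terms collapse to $0$ using $\lambda_1^2 - \lambda_1\lambda_2 = \lambda_1(\lambda_1-\lambda_2)$; the endpoint $y = \lambda_2 t$ is handled the same way. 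Hence $\partial_t^2 v + (\lambda_1+\lambda_2)\partial_x\partial_t v + \lambda_1\lambda_2\partial_x^2 v + \delta\partial_t v = 0$. The main obstacle is purely organizational — keeping track of the numerous boundary terms generated by Leibniz and the two integrations by parts, and in particular not confusing partial and total $t$-derivatives at the moving endpoints; once the terms are grouped by $u_1(x-\lambda_i t)$ and $u_1'(x-\lambda_i t)$, all the cancellations are forced.
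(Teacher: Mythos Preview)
Your proof is correct and follows essentially the same approach as the paper's: compute all derivatives via Leibniz and integration by parts, observe that the interior integral vanishes by the equation for $V$ from Lemma~\ref{L24}, and check that the boundary coefficients of $u_1(x-\lambda_i t)$ and $u_1'(x-\lambda_i t)$ cancel using the endpoint values of $V$, $\partial_t V$, $\partial_y V$ recorded there. You are in fact slightly more explicit than the paper, both in verifying the initial conditions and in spelling out how the $I_0''(0)\,t$ contributions cancel in the $u_1$ coefficients.
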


\begin{proof}
  We compute
  \begin{eqnarray*}
    \partial_t v & = & \lambda_1 V (\lambda_1 t, t) u_1 (x - \lambda_1 t) -
    \lambda_2 V (\lambda_2 t, t) u_1 (x - \lambda_2 t)\\
    & + & \int_{\lambda_2 t}^{\lambda_1 t} \partial_t V (y, t) u_1 (x - y) d
    y
  \end{eqnarray*}
  and
  \begin{eqnarray*}
    \partial^2_t v & = & \lambda_1 (\partial_t (V (\lambda_1 t, t)) +
    \partial_t V (\lambda_1 t, t)) u_1 (x - \lambda_1 t)\\
    & - & \lambda_2 (\partial_t (V (\lambda_2 t, t)) + \partial_t V
    (\lambda_2 t, t)) u_1 (x - \lambda_2 t)\\
    & - & \lambda_1^2 V (\lambda_1 t, t) u_1' (x - \lambda_1 t) + \lambda_2^2
    V (\lambda_2 t, t) u_1' (x - \lambda_2 t)\\
    & + & \int_{\lambda_2 t}^{\lambda_1 t} \partial_t^2 V (y, t) u_1 (x - y)
    d y.
  \end{eqnarray*}
  We also have
  \begin{eqnarray*}
    \partial_{y t}^2 v & = & \lambda_1 V (\lambda_1 t, t) u'_1 (x - \lambda_1
    t) - \lambda_2 V (\lambda_2 t, t) u'_1 (x - \lambda_2 t)\\
    & - & \partial_t V (\lambda_1 t, t) u_1 (x - \lambda_1 t) + \partial_t V
    (\lambda_2 t, t) u_1 (x - \lambda_2 t)\\
    & + & \int_{\lambda_2 t}^{\lambda_1 t} \partial_{t y}^2 V (y, t) u_1 (x -
    y) d y
  \end{eqnarray*}
  and
  \begin{eqnarray*}
    \partial_y^2 v & = & - (V (\lambda_1 t, t) u_1' (x - \lambda_1 t) +
    \partial_z V (\lambda_1 t, t) u_1 (x - \lambda_1 t))\\
    & + & V (\lambda_2 t, t) u_1' (x - \lambda_2 t) + \partial_z V (\lambda_2
    t, t) u_1 (x - \lambda_2 t)\\
    & + & \int_{\lambda_2 t}^{\lambda_1 t} \partial_z^2 V (y, t) u_1 (x - y)
    d y.
  \end{eqnarray*}
  We deduce that
  \begin{eqnarray*}
    &  & \partial_t^2 v + (\lambda_1 + \lambda_2) \partial_{y t}^2 v +
    \lambda_1 \lambda_2 \partial_y^2 v + \delta \partial_t v\\
    & = & u_1 (x - \lambda_1 t) \left( \lambda_1 (\partial_t (V (\lambda_1 t,
    t)) + \partial_t V (\lambda_1 t, t)) - \left( \lambda_1 + \lambda_2
    \right) \partial_t V (\lambda_1 t, t) - \lambda_1 \lambda_2 \partial_z V
    (\lambda_1 t, t) + \delta \lambda_1 V (\lambda_1 t, t) \right)\\
    & + & u_1 (x - \lambda_2 t) \left( - \lambda_2 (\partial_t (V (\lambda_2
    t, t)) + \partial_t V (\lambda_2 t, t)) + \left( \lambda_1 + \lambda_2
    \right) \partial_t V (\lambda_2 t, t) + \lambda_1 \lambda_2 \partial_z V
    (\lambda_2 t, t) - \delta \lambda_2 V (\lambda_2 t, t) \right)\\
    & + & u_1' (x - \lambda_1 t) (- \lambda_1^2 V (\lambda_1 t, t) +
    (\lambda_1 + \lambda_2) \lambda_1 V (\lambda_1 t, t) - \lambda_1 \lambda_2
    V (\lambda_1 t, t))\\
    & + & u_1' (x - \lambda_2 t) (\lambda_2^2 V (\lambda_2 t, t) - (\lambda_1
    + \lambda_2) \lambda_2 V (\lambda_2 t, t) + \lambda_1 \lambda_2 V
    (\lambda_2 t, t)) .
  \end{eqnarray*}
  We check easily with Lemma \ref{L24} that
  \[ - \lambda_1^2 V (\lambda_1 t, t) + (\lambda_1 + \lambda_2) \lambda_1 V
     (\lambda_1 t, t) - \lambda_1 \lambda_2 V (\lambda_1 t, t) = 0 \]
  and
  \[ \lambda_2^2 V (\lambda_2 t, t) - (\lambda_1 + \lambda_2) \lambda_2 V
     (\lambda_2 t, t) + \lambda_1 \lambda_2 V (\lambda_2 t, t) = 0, \]
  and similarly for the other boundaries terms.
\end{proof}

We can now solve the general damped wave equation with a source and any
initial condition.

\begin{proof}[Of Proposition \ref{dwesol}]
  We recall that equation (\ref{dwe}) is linear. We have prove it for the
  contribution in $f_1$ in Lemma \ref{L26}. Now consider $v$ the solution to
  \[ \left\{\begin{array}{l}
       \partial_t^2 v + (\lambda_1 + \lambda_2) \partial_{x t}^2 v + \lambda_1
       \lambda_2 \partial_x^2 v + \delta \partial_t v = 0\\
       v_{| t = 0 \nobracket} = 0\\
       \partial_t v_{| t = 0 \nobracket} = f_0,
     \end{array}\right. \]
  and define $w = \delta v + \partial_t v + (\lambda_1 + \lambda_2) \partial_x
  v$. Then, $w_{| t = 0 \nobracket} = \partial_t v_{| t = 0 \nobracket} = f_0$
  and $\partial_t w = - \lambda_1 \lambda_2 \partial_x^2 v$ hence $\partial_t
  w_{| t = 0 \nobracket} = 0$. We deduce that $w$ solves the problem
  \[ \left\{\begin{array}{l}
       \partial_t^2 w + (\lambda_1 + \lambda_2) \partial_{x t}^2 w + \lambda_1
       \lambda_2 \partial_x^2 w + \delta \partial_t w = 0\\
       w_{| t = 0 \nobracket} = f_0\\
       \partial_t w_{| t = 0 \nobracket} = 0.
     \end{array}\right. \]
  Finally, we define
  \[ F (t, x, s) = \int_{\lambda_2 t}^{\lambda_1 t} V (z, t) S (x - z, s) d z
  \]
  and
  \[ u (x, t) = \int_0^t F (t - s, x, s) d s. \]
  We check easily that $u (x, 0) = \partial_t u (x, 0) = 0$ and since
  \[ F (0, x, t) = \partial_s F (0, x, t) = \partial_x F (0, x, t) = 0 \]
  as well as
  \[ \partial_t F (0, x, t) = \lambda_1 V (0, 0) S (x, t) - \lambda_2 V (0, 0)
     S (x, t) = (\lambda_1 - \lambda_2) V (0, 0) S (x, t) = S (x, t) \]
  we check easily that
  \[ \partial_t^2 u + (\lambda_1 + \lambda_2) \partial_{x t}^2 u + \lambda_1
     \lambda_2 \partial_x^2 u + \delta \partial_t u = S (x, t) . \]
  We conclude by linearity of the equation, taking for $f$ the sum of the
  functions $u, w$ and the solution constructed in Lemma \ref{L26}.
\end{proof}

\subsection{Properties of the function $V$}

We are interested here in estimates on the function
\[ V (y, t) = \frac{e^{- \frac{2 \delta}{(\lambda_1 - \lambda_2)^2} \left( -
   \lambda_1 \lambda_2 t + \frac{\lambda_1 + \lambda_2}{2} y
   \right)}}{\lambda_1 - \lambda_2} I_0 \left( \frac{2 \delta \sqrt{-
   \lambda_1 \lambda_2}}{(\lambda_1 - \lambda_2)^2} \sqrt{- (y - \lambda_1 t)
   (y - \lambda_2 t)} \right), \]
with $\delta > 0, \lambda_1 > 0$ and $\lambda_2 < 0$.

\subsubsection{Properties of the Bessel function $I_0$}\label{sssBessel}

Let us also recall here some properties of $I_0$:

\begin{lemma}[{\cite{MR1349110}}]
  \label{L24vf}The function $I_0 \in C^{\infty} (\mathbb{R}^+, \mathbb{R})$
  satisfies the following properties:
  \begin{itemizedot}
    \item $I_0 (0) = 1, I_0' \geqslant 0$
    
    \item $I_0 (y) = \sum_{n = 0}^{+ \infty} \frac{1}{n! \Gamma (n + 1)}
    \left( \frac{y}{2} \right)^{2 n}$
    
    \item $I_0 (x) = \frac{e^x}{\sqrt{2 \pi x}} \left( 1 + O_{x \rightarrow
    \infty} \left( \frac{1}{x} \right) \right)$.
  \end{itemizedot}
\end{lemma}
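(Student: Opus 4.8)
The plan is to read off each item directly from the power series definition of $I_0$, which is the only thing we will take as given, together with a standard asymptotic. First I would establish the series property (the second item), which is immediate since the stated series \emph{is} the definition of $I_0$; from it $I_0(0)=1$ by evaluating at $y=0$ (all terms with $n\geqslant 1$ vanish), and smoothness on $\mathbb{R}^+$ (indeed on all of $\mathbb{R}$) follows because the series has infinite radius of convergence — the ratio of consecutive coefficients $\frac{1}{n!\Gamma(n+1)}$ tends to $0$, so termwise differentiation is legitimate to all orders. This handles the first two bullets except for the sign of $I_0'$.

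Next I would prove $I_0'\geqslant 0$ on $\mathbb{R}^+$. Differentiating the series termwise gives
\[
I_0'(y)=\sum_{n=1}^{+\infty}\frac{1}{n!\,\Gamma(n+1)}\cdot\frac{2n}{2}\left(\frac{y}{2}\right)^{2n-1}
=\sum_{n=1}^{+\infty}\frac{n}{n!\,\Gamma(n+1)}\left(\frac{y}{2}\right)^{2n-1},
\]
and for $y\geqslant 0$ every term is nonnegative since the coefficients are positive and the powers $2n-1$ are odd but applied to $y\geqslant 0$. Hence $I_0'\geqslant 0$ on $\mathbb{R}^+$, completing the first bullet.

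For the third bullet, the large-$x$ asymptotic $I_0(x)=\frac{e^x}{\sqrt{2\pi x}}\bigl(1+O(1/x)\bigr)$, I would cite the reference {\cite{MR1349110}} as stated, this being the classical asymptotic expansion of the modified Bessel function; alternatively one derives it from the integral representation $I_0(x)=\frac{1}{\pi}\int_0^\pi e^{x\cos\theta}\,d\theta$ by Laplace's method, the dominant contribution coming from a neighborhood of $\theta=0$ where $\cos\theta\approx 1-\theta^2/2$, yielding the Gaussian integral $\frac{1}{\pi}e^x\int_0^\infty e^{-x\theta^2/2}\,d\theta=\frac{e^x}{\sqrt{2\pi x}}$ at leading order, with the correction term controlled by the next order in the expansion of $\cos\theta$. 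There is no real obstacle here; the only mildly delicate point is justifying the termwise differentiation of the series (trivial given the infinite radius of convergence) and, if one wants a self-contained proof of the third item rather than a citation, the uniform control of the remainder in Laplace's method. Since the statement merely cites {\cite{MR1349110}}, the honest plan is simply to invoke that reference for the asymptotic and derive the first two items from the series.
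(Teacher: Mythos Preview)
Your proposal is correct. The paper does not actually prove this lemma: it is stated with a citation to Watson's treatise {\cite{MR1349110}} and no argument is given, so your sketch already goes beyond what the paper provides for the first two items while matching its approach (a bare citation) for the asymptotic.
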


\subsubsection{Estimates on the function $V$}

\begin{lemma}
  \label{Vest1}For $\delta > 0, \lambda_1 > 0, \lambda_2 < 0$, there exists
  $C_{0, 0} (\lambda_1, \lambda_2, \delta), a_0 (\lambda_1, \lambda_2, \delta)
  > 0$ such that the function
  \[ V (y, t) = \frac{e^{- \frac{2 \delta}{(\lambda_1 - \lambda_2)^2} \left( -
     \lambda_1 \lambda_2 t + \frac{\lambda_1 + \lambda_2}{2} y
     \right)}}{\lambda_1 - \lambda_2} I_0 \left( \frac{2 \delta \sqrt{-
     \lambda_1 \lambda_2}}{(\lambda_1 - \lambda_2)^2} \sqrt{- (y - \lambda_1
     t) (y - \lambda_2 t)} \right) \]
  satisfied for $t \geqslant 0, y \in [\lambda_2 t, \lambda_1 t]$ that
  \[ | V (y, t) | \leqslant \frac{C_{0, 0} (\lambda_1, \lambda_2, \delta)}{(1
     + t)^{\frac{1}{2}}} e^{- a_0 \frac{y^2}{(1 + t)}} . \]
\end{lemma}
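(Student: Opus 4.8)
The plan is to get everything from the explicit formula $V(y,t)=\frac{e^{\gamma_1(y,t)}}{\lambda_1-\lambda_2}I_0(\gamma_2(y,t))$ derived in the proof of Lemma~\ref{L24}, where
\[ \gamma_1(y,t)=\frac{-2\delta}{(\lambda_1-\lambda_2)^2}\Big(-\lambda_1\lambda_2 t+\tfrac{\lambda_1+\lambda_2}{2}y\Big),\qquad \gamma_2(y,t)=\frac{2\delta\sqrt{-\lambda_1\lambda_2}}{(\lambda_1-\lambda_2)^2}\sqrt{Q(y,t)}, \]
and $Q(y,t)\assign-(y-\lambda_1 t)(y-\lambda_2 t)\geqslant 0$ on the cone $y\in[\lambda_2 t,\lambda_1 t]$. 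By Lemma~\ref{L24vf} ($I_0$ is continuous, positive, with $I_0(0)=1$ and $I_0(x)=\frac{e^x}{\sqrt{2\pi x}}(1+O(1/x))$ at infinity) there is a constant with $I_0(x)\leqslant C\frac{e^x}{\sqrt{1+x}}$ for all $x\geqslant0$, so since $\gamma_2\geqslant0$ and $\lambda_1-\lambda_2>0$,
\[ |V(y,t)|\leqslant\frac{C}{\lambda_1-\lambda_2}\cdot\frac{e^{\gamma_1(y,t)+\gamma_2(y,t)}}{\sqrt{1+\gamma_2(y,t)}}, \]
and it suffices to bound the right-hand side by $C'(1+t)^{-1/2}e^{-a_0 y^2/(1+t)}$.

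The first step is a concavity estimate for $g\assign\gamma_1+\gamma_2$. Completing the square yields $Q=\frac{(\lambda_1-\lambda_2)^2}{4}t^2-\big(y-\tfrac{\lambda_1+\lambda_2}{2}t\big)^2$, hence $0\leqslant Q\leqslant\frac{(\lambda_1-\lambda_2)^2}{4}t^2$ on the cone; differentiating twice in $y$ gives $\partial_y^2\sqrt{Q}=-\frac{(\lambda_1-\lambda_2)^2 t^2}{4\,Q^{3/2}}$, so (as $\gamma_1$ is affine in $y$) $\partial_y^2 g\leqslant-\frac{c_\ast}{t}$ for $t>0$ with $c_\ast>0$ depending only on $\lambda_1,\lambda_2,\delta$. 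Since one checks directly that $g(0,t)=\partial_y g(0,t)=0$ and $0$ lies in the cone, Taylor's formula at $y=0$ gives $g(y,t)\leqslant-\frac{c_\ast y^2}{2t}\leqslant-\frac{c_\ast y^2}{2(1+t)}$ for all $t\geqslant0$ and $y\in[\lambda_2 t,\lambda_1 t]$. Using just $\frac{1}{\sqrt{1+\gamma_2}}\leqslant1$ this already yields $|V(y,t)|\leqslant C e^{-a_0 y^2/(1+t)}$; what remains is to recover the prefactor $(1+t)^{-1/2}$.

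For this I split the cone with a small parameter $\eta>0$ into the bulk $\{Q\geqslant\eta^2\frac{(\lambda_1-\lambda_2)^2}{4}t^2\}$ and the two endpoint subintervals where $Q<\eta^2\frac{(\lambda_1-\lambda_2)^2}{4}t^2$ (which collapse onto the vertices $y=\lambda_1 t$ and $y=\lambda_2 t$ as $\eta\to0$). On the bulk, $\gamma_2\geqslant c(\eta)t$, so $1+\gamma_2\geqslant c'(\eta)(1+t)$, and together with $g\leqslant-\frac{c_\ast y^2}{2(1+t)}$ this gives the desired bound. On each endpoint region $\gamma_2$ is $O(\eta t)$ while $\gamma_1$ is linearly negative in $t$: near $y=\lambda_1 t$ one has $\gamma_1(\lambda_1 t,t)=\frac{-\delta\lambda_1 t}{\lambda_1-\lambda_2}<0$ (Lemma~\ref{L24}) and $\gamma_1$ is affine in $y$, so $\gamma_1\leqslant-ct$ there once $\eta$ is small, and symmetrically near $y=\lambda_2 t$; choosing $\eta$ small enough to absorb the $O(\eta t)$ contribution of $\gamma_2$ gives $\gamma_1+\gamma_2\leqslant-\frac{c}{2}t$ on these regions. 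Since there $|y|\leqslant\max(\lambda_1,-\lambda_2)\,t$, we have $\frac{y^2}{1+t}\leqslant\max(\lambda_1^2,\lambda_2^2)\,t$, so for $a_0$ small enough relative to $c,\lambda_1,\lambda_2$ the pure exponential $e^{-ct/2}$ dominates $C(1+t)^{-1/2}e^{-a_0 y^2/(1+t)}$. Taking $a_0$ to be the minimum of the constants produced by the bulk and endpoint cases, and $C_{0,0}$ large enough, finishes the proof.

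The main obstacle is this last point: the heat-kernel prefactor $(1+t)^{-1/2}$ cannot be extracted from $\gamma_2$ uniformly, because $\gamma_2$ vanishes at the two vertices of the cone. The resolution is the observation that precisely at those vertices $\gamma_1$ is negative and linear in $t$, so the whole of $V$ decays exponentially there and the weaker bound is more than sufficient. Everything else — the pointwise bound on $I_0$, the completing-the-square identity for $Q$, the concavity of $g$, and the elementary control of $\gamma_1$ near the vertices — is routine computation with the explicit formula.
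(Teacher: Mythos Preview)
Your proof is correct and follows essentially the same approach as the paper: both use the Bessel asymptotic $I_0(x)\lesssim e^x/\sqrt{1+x}$, bound $\gamma_1+\gamma_2$ by $-c\,y^2/t$, and split the cone into a bulk region (where $\gamma_2\gtrsim t$ supplies the $(1+t)^{-1/2}$) and endpoint regions (where $\gamma_1\leqslant -ct$ gives exponential decay). The only cosmetic difference is that the paper obtains the Gaussian bound on $g=\gamma_1+\gamma_2$ via the explicit identity $g=-\frac{\delta}{(\lambda_1-\lambda_2)^2}\frac{y^2}{t}-tF_0(y/t)$ with $F_0\geqslant0$, whereas you derive it from the concavity $\partial_y^2 g\leqslant -c_\ast/t$ together with $g(0,t)=\partial_y g(0,t)=0$; both are valid and yield the same conclusion (with slightly different constants).
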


\begin{proof}
  First we check easily the estimate if \ $0 \leqslant t \leqslant 1, y \in
  [\lambda_2 t, \lambda_1 t]$ (which is simply the fact that $V$ is bounded if
  $t$ and $y$ are bounded). We now suppose that $t \geqslant 1$. We define as
  in the proof of Lemma \ref{L24} the quantities
  \[ \gamma_1 (y, t) = \frac{- 2 \delta}{(\lambda_1 - \lambda_2)^2} \left( -
     \lambda_1 \lambda_2 t + \frac{\lambda_1 + \lambda_2}{2} y \right) \]
  and
  \[ \gamma_2 (y, t) = \frac{2 \delta \sqrt{- \lambda_1 \lambda_2}}{(\lambda_1
     - \lambda_2)^2} \sqrt{- (y - \lambda_1 t) (y - \lambda_2 t)}, \]
  so that $V (y, t) = \frac{e^{\gamma_1 (y, t)} I_0 (\gamma_2 (y,
  t))}{\lambda_1 - \lambda_2}$. We also write
  \[ \gamma_2 (y, t) = t G_0 \left( \frac{y}{t} \right) \]
  with
  \[ G_0 (z) \assign \frac{2 \delta \sqrt{- \lambda_1 \lambda_2}}{(\lambda_1 -
     \lambda_2)^2} \sqrt{- (z - \lambda_1) (z - \lambda_2)} . \]
  We also define
  \[ H (x) \assign e^{- x} I_0 (x) \]
  which is a smooth decreasing function on $\mathbb{R}^+$ that satisfies,
  using Lemma \ref{L24vf}, that
  \[ H (0) = 1, H (x) \sim \frac{1}{\sqrt{2 \pi x}} \]
  when $x \rightarrow + \infty$. Remark that $0 \leqslant G_0 (x) \leqslant K
  (\lambda_1, \lambda_2, \delta)$ for $x \in [\lambda_2, \lambda_1]$, and that
  for $x \in \left[ \frac{\lambda_2}{2}, \frac{\lambda_1}{2} \right]$ we have
  \[ G_0 (x) \geqslant C_1 (\lambda_1 \comma \lambda_2, \delta) > 0, \]
  as well as
  \[ V (y, t) = \frac{e^{\gamma_1 (y, t) + \gamma_2 (y, t)}}{\lambda_1 -
     \lambda_2} H \left( t G_0 \left( \frac{y}{t} \right) \right) . \]
  We compute that
  \[ \gamma_1 (y, t) + \gamma_2 (y, t) = \frac{- \delta}{(\lambda_1 -
     \lambda_2)^2} \frac{y^2}{t} - t F_0 \left( \frac{y}{t} \right) \]
  where
  \[ F_0 (x) \assign \frac{2 \delta \lambda_1 \lambda_2}{(\lambda_1 -
     \lambda_2)^2} \left( - 1 + \frac{1}{2} \left( \frac{\lambda_1 +
     \lambda_2}{\lambda_1 \lambda_2} x - \frac{x^2}{\lambda_1 \lambda_2}
     \right) + \sqrt{1 - \frac{(\lambda_1 + \lambda_2)}{\lambda_1 \lambda_2} x
     + \frac{x^2}{\lambda_1 \lambda_2}} \right) . \]
  Indeed, we have
  \begin{eqnarray*}
    &  & \gamma_1 (y, t) + \gamma_2 (y, t)\\
    & = & \frac{2 \delta}{(\lambda_1 - \lambda_2)^2} \left( \lambda_1
    \lambda_2 t - \frac{\lambda_1 + \lambda_2}{2} y + \sqrt{\lambda_1
    \lambda_2 (y - \lambda_1 t) (y - \lambda_2 t)} \right)\\
    & = & \frac{2 \delta t}{(\lambda_1 - \lambda_2)^2} \left( \lambda_1
    \lambda_2 - \frac{\lambda_1 + \lambda_2}{2} \frac{y}{t} + \sqrt{(\lambda_1
    \lambda_2)^2 - \frac{y}{t} (\lambda_1 \lambda_2^2 + \lambda_1^2 \lambda_2)
    + \lambda_1 \lambda_2 \frac{y^2}{t^2}} \right)\\
    & = & \frac{2 \delta \lambda_1 \lambda_2 t}{(\lambda_1 - \lambda_2)^2}
    \left( 1 - \frac{1}{2} \left( \frac{1}{\lambda_1} + \frac{1}{\lambda_2}
    \right) \frac{y}{t} - \sqrt{1 - \frac{y}{t} \left( \frac{1}{\lambda_1} +
    \frac{1}{\lambda_2} \right) + \frac{1}{\lambda_1 \lambda_2}
    \frac{y^2}{t^2}} \right)\\
    & = & - t F_0 \left( \frac{y}{t} \right) + \frac{- \delta}{(\lambda_1 -
    \lambda_2)^2} \frac{y^2}{t} .
  \end{eqnarray*}
  Remark that with $z = \frac{\lambda_1 + \lambda_2}{\lambda_1 \lambda_2} x -
  \frac{x^2}{\lambda_1 \lambda_2}$ we have $F_0 (x) = \frac{2 \delta \lambda_1
  \lambda_2}{(\lambda_1 - \lambda_2)^2} \left( - 1 + \frac{1}{2} z + \sqrt{1 -
  z} \right)$. This implies that we have $F_0 (x) \geqslant 0$ for all $x \in
  [\lambda_2, \lambda_1]$ and for $x \in [\lambda_2, \lambda_1] \backslash
  \left[ \frac{\lambda_2}{2}, \frac{\lambda_1}{2} \right],$
  \[ F_0 (x) \geqslant C_2 (\lambda_1, \lambda_2, \delta) > 0. \]
  We have the decomposition
  \[ V (y, t) = \frac{e^{\frac{- \delta}{(\lambda_1 - \lambda_2)^2}
     \frac{y^2}{t}}}{\lambda_1 - \lambda_2} e^{- t F_0 \left( \frac{y}{t}
     \right)} H \left( t G_0 \left( \frac{y}{t} \right) \right) . \]
  For $ \frac{y}{t} \in \left[ \frac{\lambda_2}{2}, \frac{\lambda_1}{2}
  \right]$, $\left| e^{- t F_0 \left( \frac{y}{t} \right)} \right| \leqslant
  1$ and $H \left( t G_0 \left( \frac{y}{t} \right) \right) \leqslant H (C_1
  (\lambda_1 \comma \lambda_2, \delta) t) \leqslant \frac{K (\lambda_1,
  \lambda_2, \delta)}{\sqrt{1 + t}}$ thus for $a_0 = \frac{\delta}{(\lambda_1
  - \lambda_2)^2} > 0$, we have
  \[ | V (y, t) | \leqslant \frac{C (\lambda_1, \lambda_2, \delta) e^{\frac{-
     \delta}{(\lambda_1 - \lambda_2)^2} \frac{y^2}{t}}}{(1 + t)^{\frac{1}{2}}}
     \leqslant \frac{K_1 (\lambda_1, \lambda_2, \delta) e^{a_0 \frac{y^2}{1 +
     t}}}{(1 + t)^{\frac{1}{2}}} . \]
  And for $\frac{y}{t} \in [\lambda_2, \lambda_1] \backslash \left[
  \frac{\lambda_2}{2}, \frac{\lambda_1}{2} \right]$, we have $e^{- t F_0
  \left( \frac{y}{t} \right)} \leqslant e^{- C_2 (\lambda_1, \lambda_2,
  \delta) t}, \left| H \left( t G_0 \left( \frac{y}{t} \right) \right) \right|
  \leqslant K$ and thus
  \[ | V (y, t) | \leqslant C (\lambda_1, \lambda_2, \delta) e^{\frac{-
     \delta}{(\lambda_1 - \lambda_2)^2} \frac{y^2}{t}} e^{- C_2 (\lambda_1,
     \lambda_2, \delta) t} \leqslant \frac{K_2 (\lambda_1, \lambda_2, \delta)
     e^{a_0 \frac{y^2}{1 + t}}}{(1 + t)^{\frac{1}{2}}} . \]
  This concludes the estimate for $C_{0, 0} = \max (K_1, K_2)$.
\end{proof}

\subsubsection{Estimates on derivatives of $V$}

\begin{lemma}
  For $\delta > 0, \lambda_1 > 0, \lambda_2 < 0$, the function
  \[ V (y, t) = \frac{e^{- \frac{2 \delta}{(\lambda_1 - \lambda_2)^2} \left( -
     \lambda_1 \lambda_2 t + \frac{\lambda_1 + \lambda_2}{2} y
     \right)}}{\lambda_1 - \lambda_2} I_0 \left( \frac{2 \delta \sqrt{-
     \lambda_1 \lambda_2}}{(\lambda_1 - \lambda_2)^2} \sqrt{- (y - \lambda_1
     t) (y - \lambda_2 t)} \right) \]
  is smooth in both $y$ and $t$ on $t \geqslant 0, y \in [\lambda_2 t,
  \lambda_1 t]$, including at the boundary.
\end{lemma}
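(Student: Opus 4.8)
The plan is to observe that the only apparent source of non-smoothness — the square root $\sqrt{-(y-\lambda_1 t)(y-\lambda_2 t)}$ inside $I_0$, which vanishes on the two boundary lines $y=\lambda_1 t$ and $y=\lambda_2 t$ — is in fact harmless, because $I_0$ is an even function, so only the square of this quantity, a polynomial, ever really appears.

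First I would record that the prefactor $\frac{1}{\lambda_1-\lambda_2}\,e^{-\frac{2\delta}{(\lambda_1-\lambda_2)^2}\left(-\lambda_1\lambda_2 t+\frac{\lambda_1+\lambda_2}{2}y\right)}$ is the exponential of an affine function of $(y,t)$, hence real-analytic, in particular $C^\infty$, on all of $\mathbb{R}^2$. It remains to handle the Bessel factor. Set $\kappa:=\frac{2\delta\sqrt{-\lambda_1\lambda_2}}{(\lambda_1-\lambda_2)^2}>0$ and $P(y,t):=-(y-\lambda_1 t)(y-\lambda_2 t)$, which is a polynomial in $(y,t)$ and therefore $C^\infty$ on $\mathbb{R}^2$; on the region $\{t\geq 0,\ \lambda_2 t\leq y\leq \lambda_1 t\}$ one has $y-\lambda_1 t\leq 0$ and $y-\lambda_2 t\geq 0$, so $P\geq 0$ and $\sqrt{P}$ is well-defined there.

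Next I would invoke the power series of $I_0$ recalled in Lemma \ref{L24vf}: since it contains only even powers,
\[ I_0\bigl(\kappa\sqrt{P}\bigr)=\sum_{n=0}^{+\infty}\frac{1}{n!\,\Gamma(n+1)}\Bigl(\frac{\kappa\sqrt{P}}{2}\Bigr)^{2n}=\sum_{n=0}^{+\infty}\frac{(\kappa^2/4)^n}{n!\,\Gamma(n+1)}\,P^n=:\Phi(P). \]
The ratio test applied to $\Phi(w)=\sum_n\frac{(\kappa^2/4)^n}{(n!)^2}w^n$ shows it has infinite radius of convergence, so $\Phi$ is an entire (in particular $C^\infty$) function of one real variable whose derivatives are obtained by term-by-term differentiation. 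Hence $\Phi\circ P$ is a composition of $C^\infty$ maps, so $C^\infty$ on all of $\mathbb{R}^2$, and on the region in question $V$ coincides with the globally smooth function $(y,t)\mapsto\frac{1}{\lambda_1-\lambda_2}\,e^{-\frac{2\delta}{(\lambda_1-\lambda_2)^2}\left(-\lambda_1\lambda_2 t+\frac{\lambda_1+\lambda_2}{2}y\right)}\Phi\bigl(P(y,t)\bigr)$. In particular $V$ is smooth up to and including the boundary lines $y=\lambda_1 t$ and $y=\lambda_2 t$, which is the claim.

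There is essentially no hard step here: the one thing to get right is the identity $I_0(\kappa\sqrt{P})=\Phi(P)$ together with the observation that it is legitimate even where $P=0$, since it follows purely formally from the evenness of the defining series, so the branch point of the square root never actually enters. If one preferred to avoid series manipulation, an equivalent route is to note that $z\mapsto I_0(\sqrt{z})$ extends to the entire function $\mathcal{I}(z):=\sum_{n\geq 0}\frac{z^n}{4^n(n!)^2}$, whence $V=\frac{1}{\lambda_1-\lambda_2}e^{L}\,\mathcal{I}(\kappa^2 P)$ with $L$ affine and $P$ polynomial, and smoothness on $\mathbb{R}^2$ is immediate; I would present the series version since the expansion of $I_0$ is already at hand from Lemma \ref{L24vf}.
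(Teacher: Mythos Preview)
Your proof is correct and follows essentially the same approach as the paper: both use the even power series of $I_0$ from Lemma~\ref{L24vf} to rewrite the Bessel factor as a convergent series in the polynomial $-(y-\lambda_1 t)(y-\lambda_2 t)$, so that $V$ is manifestly the product of an exponential of an affine function with an entire function of a polynomial, hence smooth on all of $\mathbb{R}^2$ and in particular up to the boundary. The paper's version is slightly terser but the idea is identical.
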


\begin{proof}
  We simply check, with Lemma \ref{L24vf}, that
  \begin{eqnarray}
    &  & I_0 \left( \frac{2 \delta \sqrt{- \lambda_1 \lambda_2}}{(\lambda_1 -
    \lambda_2)^2} \sqrt{- (y - \lambda_1 t) (y - \lambda_2 t)} \right)
    \nonumber\\
    & = & \sum_{n = 0}^{+ \infty} \frac{1}{n! \Gamma (n + 1)} \left(
    \frac{\delta^2 \lambda_1 \lambda_2}{(\lambda_1 - \lambda_2)^4} (y -
    \lambda_1 t) (y - \lambda_2 t) \right)^n,  \label{eq23vf}
  \end{eqnarray}
  which leads to the differentiability of $V$ on $t \geqslant 0, y \in
  [\lambda_2 t, \lambda_1 t]$, even at the boundary $y = \lambda_2 t$ or $y =
  \lambda_1 t$. In fact, by this formula,
  \[ V (y, t) = \frac{e^{- \frac{2 \delta}{(\lambda_1 - \lambda_2)^2} \left( -
     \lambda_1 \lambda_2 t + \frac{\lambda_1 + \lambda_2}{2} y
     \right)}}{\lambda_1 - \lambda_2} \sum_{n = 0}^{+ \infty} \frac{1}{n!
     \Gamma (n + 1)} \left( \frac{\delta^2 \lambda_1 \lambda_2}{(\lambda_1 -
     \lambda_2)^4} (y - \lambda_1 t) (y - \lambda_2 t) \right)^n \]
  can be extended to $\mathbb{R}_y \times \mathbb{R}_t$ and is smooth on it.
\end{proof}

We can now conclude the proof of Lemma \ref{Vest}.

\begin{proof}[of Lemma \ref{Vest}]
  We check easily the estimate for $0 \leqslant t \leqslant 1$ and we suppose
  from now on that $t \geqslant 1$. We have, under the notations of the proof
  of Lemma \ref{Vest1}, that
  \[ V (y, t) = \frac{e^{\gamma_1 (y, t)}}{\lambda_1 - \lambda_2} I_0
     (\gamma_2 (y, t)) \]
  with
  \[ \gamma_1 (y, t) = \frac{- 2 \delta}{(\lambda_1 - \lambda_2)^2} \left( -
     \lambda_1 \lambda_2 t + \frac{\lambda_1 + \lambda_2}{2} y \right) \]
  and
  \[ \gamma_2 (y, t) = \frac{2 \delta \sqrt{- \lambda_1 \lambda_2}}{(\lambda_1
     - \lambda_2)^2} \sqrt{- (y - \lambda_1 t) (y - \lambda_2 t)} . \]
  We compute that
  \[ \partial_t V = V \left( \partial_t \gamma_1 + \partial_t (\gamma_2)
     \frac{I_0' (\gamma_2)}{I_0 (\gamma_2)} \right) = V \left( \partial_t
     \gamma_1 + \partial_t \gamma_2 + \partial_t \gamma_2 \left( \frac{I_0'
     (\gamma_2)}{I_0 (\gamma_2)} - 1 \right) \right) . \]
  Under the previous notations, we have
  \[ \gamma_2 (y, t) = t G_0 \left( \frac{y}{t} \right), \]
  where
  \[ G_0 (z) = \frac{2 \delta \sqrt{- \lambda_1 \lambda_2}}{(\lambda_1 -
     \lambda_2)^2} \sqrt{- (z - \lambda_1) (z - \lambda_2)} . \]
  We check that $\partial_t \gamma_1 = - G_0 (0)$ and
  \[ \partial_t \gamma_2 = G_0 \left( \frac{y}{t} \right) - \frac{y}{t} G_0'
     \left( \frac{y}{t} \right) . \]
  Therefore,
  \[ \partial_t \gamma_1 + \partial_t \gamma_2 = - G_0 (0) + G_0 \left(
     \frac{y}{t} \right) - \frac{y}{t} G_0' \left( \frac{y}{t} \right), \]
  We have
  \begin{eqnarray*}
    \partial_t V & = & \frac{e^{\frac{- \delta}{(\lambda_1 - \lambda_2)^2}
    \frac{y^2}{t}}}{\lambda_1 - \lambda_2} e^{- t F_0 \left( \frac{y}{t}
    \right)} H \left( t G_0 \left( \frac{y}{t} \right) \right) \left( - G_0
    (0) + G_0 \left( \frac{y}{t} \right) - \frac{y}{t} G_0' \left( \frac{y}{t}
    \right) \right)\\
    & + & \frac{e^{\frac{- \delta}{(\lambda_1 - \lambda_2)^2}
    \frac{y^2}{t}}}{\lambda_1 - \lambda_2} e^{- t F_0 \left( \frac{y}{t}
    \right)} H \left( t G_0 \left( \frac{y}{t} \right) \right) \partial_t
    \gamma_2 \left( \frac{I_0' (\gamma_2)}{I_0 (\gamma_2)} - 1 \right) .
  \end{eqnarray*}
  We have
  \[ G_0'' (0) = \frac{1}{- \lambda_1 \lambda_2} \left( \sqrt{- \lambda_1
     \lambda_2} + \frac{\lambda_1 + \lambda_2}{2} \right) > 0 \]
  (it is $0$ if and only if $0 < \lambda_1 - \lambda_2 = 0$) and therefore
  \[ - G_0 (0) + G_0 \left( \frac{y}{t} \right) - \frac{y}{t} G_0' \left(
     \frac{y}{t} \right) = O_{\frac{y}{t} \rightarrow 0} \left(
     \frac{y^2}{t^2} \right) . \]
  We compute
  \[ F_0'' (0) = \frac{- \delta (\lambda_1 + \lambda_2)^2}{2 \lambda_1
     \lambda_2 (\lambda_1 - \lambda_2)^2} \geqslant 0 \]
  but it can be $0$ if $\lambda_1 + \lambda_2 = 0$. We therefore estimate
  \begin{eqnarray*}
    &  & | \partial_t V (y, t) |\\
    & \leqslant & \frac{C (\lambda_1, \lambda_2, \delta)}{\sqrt{t}}
    e^{\frac{- \delta}{2 (\lambda_1 - \lambda_2)^2} \frac{y^2}{t}} e^{- t
    \left( F_0 \left( \frac{y}{t} \right) + \frac{\delta y^2}{2 (\lambda_1 -
    \lambda_2)^2 t^2} \right)} \left| - G_0 (0) + G_0 \left( \frac{y}{t}
    \right) - \frac{y}{t} G_0' \left( \frac{y}{t} \right) \right|\\
    & + & \frac{C (\lambda_1, \lambda_2, \delta)}{\sqrt{t}} e^{\frac{-
    \delta}{2 (\lambda_1 - \lambda_2)^2} \frac{y^2}{t}} e^{- t \left( F_0
    \left( \frac{y}{t} \right) + \frac{\delta y^2}{2 (\lambda_1 - \lambda_2)^2
    t^2} \right)} \left| \partial_t \gamma_2 \left( \frac{I_0' (\gamma_2)}{I_0
    (\gamma_2)} - 1 \right) \right| .
  \end{eqnarray*}
  Since $\left( F_0 (x) + \frac{\delta x^2}{2 (\lambda_1 - \lambda_2)^2}
  \right)'' (0) > 0$ and $G_0'' (0) > 0$, we check that for $\frac{y}{t} \in
  \left[ \frac{\lambda_2}{2}, \frac{\lambda_1}{2} \right]$,
  \begin{eqnarray*}
    &  & e^{- t \left( F_0 \left( \frac{y}{t} \right) + \frac{\delta y^2}{2
    (\lambda_1 - \lambda_2)^2 t^2} \right)} \left| - G_0 (0) + G_0 \left(
    \frac{y}{t} \right) - \frac{y}{t} G_0' \left( \frac{y}{t} \right)
    \right|\\
    & \leqslant & \frac{C (\lambda_1, \lambda_2, \delta)}{t} .
  \end{eqnarray*}
  This is a consequence of the fact that for $j \geqslant 1, t > 0$,
  \begin{equation}
    \sup_{y \geqslant 0} \{ e^{- t y^2} y^j \} = \frac{1}{t^{j / 2}} \left(
    \frac{j}{2 e} \right)^{j / 2} . \label{eqq240vf}
  \end{equation}
  Similarly, for $\frac{y}{t} \in \left[ \frac{\lambda_2}{2},
  \frac{\lambda_1}{2} \right]$,
  \begin{eqnarray*}
    &  & e^{- t \left( F_0 \left( \frac{y}{t} \right) + \frac{\delta y^2}{2
    (\lambda_1 - \lambda_2)^2 t^2} \right)} \left| \partial_t \gamma_2 \left(
    \frac{I_0' (\gamma_2)}{I_0 (\gamma_2)} - 1 \right) \right|\\
    & \leqslant & C (\lambda_1, \lambda_2, \delta) e^{- t \left( F_0 \left(
    \frac{y}{t} \right) + \frac{\delta y^2}{2 (\lambda_1 - \lambda_2)^2 t^2}
    \right)} \left| \frac{\partial_t \gamma_2}{\gamma_2} \right|\\
    & \leqslant & \frac{C (\lambda_1, \lambda_2, \delta)}{t} .
  \end{eqnarray*}
  For $\frac{y}{t} \in [\lambda_2, \lambda_1] \backslash \left[
  \frac{\lambda_2}{2}, \frac{\lambda_1}{2} \right]$, we simply go back to
  \[ V (y, t) = \frac{e^{- \frac{2 \delta}{(\lambda_1 - \lambda_2)^2} \left( -
     \lambda_1 \lambda_2 t + \frac{\lambda_1 + \lambda_2}{2} y
     \right)}}{\lambda_1 - \lambda_2} I_0 \left( \frac{2 \delta \sqrt{-
     \lambda_1 \lambda_2}}{(\lambda_1 - \lambda_2)^2} \sqrt{- (y - \lambda_1
     t) (y - \lambda_2 t)} \right), \]
  and remark that there exists $a > 0$ such that
  \[ e^{- \frac{2 \delta}{(\lambda_1 - \lambda_2)^2} \left( - \lambda_1
     \lambda_2 t + \frac{\lambda_1 + \lambda_2}{2} y \right)} \leqslant e^{- a
     t} . \]
  With equation (\ref{eq23vf}) we check that
  \[ \left| \partial_t \left( I_0 \left( \frac{2 \delta \sqrt{- \lambda_1
     \lambda_2}}{(\lambda_1 - \lambda_2)^2} \sqrt{- (y - \lambda_1 t) (y -
     \lambda_2 t)} \right) \right) \right| \leqslant K (\lambda_1, \lambda_2,
     \delta) (1 + t) \]
  if $\frac{y}{t} \in [\lambda_2, \lambda_1]$. We deduce that for $\frac{y}{t}
  \in [\lambda_2, \lambda_1] \backslash \left[ \frac{\lambda_2}{2},
  \frac{\lambda_1}{2} \right], t \geqslant 1$, $t \geqslant \alpha
  \frac{y^2}{1 + t}$ for some $\alpha > 0$, thus
  \begin{equation}
    | \partial_t V (y, t) | \leqslant K (1 + t) e^{- a t} \leqslant K (1 + t)
    e^{- \frac{a}{2} t} e^{- \frac{a}{2 \alpha} \frac{y^2}{1 + t}} \leqslant
    \frac{K}{t^{\frac{1}{2}}} e^{- \frac{a}{2 \alpha} \frac{y^2}{t}} .
    \label{eq24vf}
  \end{equation}
  We now generalize to higher order derivatives in time. We recall that
  \[ \gamma_2 (y, t) = t G_0 \left( \frac{y}{t} \right) . \]
  We show by induction that
  \[ \partial_t^n \gamma_2 (y, t) = t^{1 - n} G_n \left( \frac{y}{t} \right),
  \]
  with
  \[ G_{n + 1} (x) = (1 - n) G_n (x) - x G_n' (x) . \]
  Furthermore, we have
  \[ I_0 (x), I_0' (x) = \frac{e^x}{\sqrt{2 \pi x}} \left( 1 + O_{x
     \rightarrow + \infty} \left( \frac{1}{x} \right) \right), \]
  therefore
  \[ \frac{I_0' (x)}{I_0 (x)} = 1 + O_{x \rightarrow + \infty} \left(
     \frac{1}{x} \right) . \]
  We can show, using the development of $I_0, I_0' = I_1$ from
  {\cite{MR1349110}}, that for $x \geqslant 0, k \geqslant 1$,
  \begin{equation}
    \left| \partial_x^k \left( \frac{I_0' (x)}{I_0 (x)} \right) \right|
    \leqslant \frac{K (k)}{(1 + x)^{k + 1}} . \label{eq26vf}
  \end{equation}
  We recall that
  \[ \partial_t V = V \left( \partial_t \gamma_1 + \partial_t \gamma_2 \left.
     \frac{I_0' (\gamma_2)}{I_0 (\gamma_2)} \right) \right. \]
  and $\partial_t^2 \gamma_1 = 0$. For $n \geqslant 1$ we have
  \[ \partial_t^{n + 1} V = \sum_{k = 0}^n \left(\begin{array}{c}
       n\\
       k
     \end{array}\right) \partial_t^{n - k} V \partial_t^k \left( \partial_t
     \gamma_1 + \partial_t \gamma_2 \frac{I_0' (\gamma_2)}{I_0 (\gamma_2)}
     \right) . \]
  We check that
  \[ \left| \partial_t^k \left( \partial_t \gamma_2 \left. \frac{I_0'
     (\gamma_2)}{I_0 (\gamma_2)} \right) \right) \right| \leqslant \frac{K
     (k)}{(1 + t)^k} . \]
  Indeed, each derivatives hit either a term of the form $\partial_t^j
  \gamma_2$, which yields an additional $\frac{1}{1 + t}$, of it hit a term of
  the form $\partial_x^j \left( \frac{I_0'}{I_0} \right) (\gamma_2)$, and
  \[ \partial_t \left( \partial_x^j \left( \frac{I_0'}{I_0} \right) (\gamma_2)
     \right) = \partial_t \gamma_2 \partial_x^{j + 1} \left( \frac{I_0'}{I_0}
     \right) (\gamma_2), \]
  adding a term in $\frac{1}{(1 + \gamma_2)} \sim \frac{1}{1 + t}$ if
  $\frac{y}{t} \in \left[ \frac{\lambda_2}{2}, \frac{\lambda_1}{2} \right]$.
  Outside of $\left[ \frac{\lambda_2}{2}, \frac{\lambda_1}{2} \right]$, we use
  the exponential decay as for the proof of (\ref{eq24vf}).
  
  For the term $\partial_t^n V \left( \partial_t \gamma_1 + \partial_t
  \gamma_2 \frac{I_0' (\gamma_2)}{I_0 (\gamma_2)} \right)$, we still use the
  fact that
  \[ \left| \partial_t \gamma_1 + \partial_t \gamma_2 \frac{I_0'
     (\gamma_2)}{I_0 (\gamma_2)} \right| \leqslant O_{\frac{y}{t} \rightarrow
     0} \left( \frac{y^2}{t^2} \right) + \frac{K}{t} \]
  and we simply use equation (\ref{eqq240vf}). By induction we deduce the
  estimate on $\partial_t^n V$.
  
  \
  
  Now, we have
  \[ \partial_y V = V \left( \partial_y \gamma_1 + \partial_y \gamma_2 +
     \partial_y \gamma_2 \left( \frac{I_0' (\gamma_2)}{I_0 (\gamma_2)} - 1
     \right) \right) \]
  and we compute that
  \[ \partial_y \gamma_1 = - \frac{\delta (\lambda_1 + \lambda_2)}{(\lambda_1
     - \lambda_2)^2} = - G_0' (0) \]
  and
  \[ \partial_y \gamma_2 = G_0' \left( \frac{y}{t} \right), \]
  therefore
  \[ \partial_y \gamma_1 + \partial_y \gamma_2 = G_0' \left( \frac{y}{t}
     \right) - G_0' (0) = O_{\frac{y}{t} \rightarrow 0} \left( \frac{y}{t}
     \right) . \]
  We conclude that
  \[ | \partial_y V | \leqslant \frac{C (\lambda_1, \lambda_2, \delta)
     e^{\frac{- \delta}{2 (\lambda_1 - \lambda_2)^2} \frac{y^2}{t}}}{t} . \]
  Now, we have
  \begin{equation}
    \partial_y^{n + 1} V = \sum_{k = 0}^n \left(\begin{array}{c}
      n\\
      k
    \end{array}\right) \partial_y^{n - k} V \partial_y^k \left( \partial_y
    \gamma_1 + \partial_y \gamma_2 \frac{I_0' (\gamma_2)}{I_0 (\gamma_2)}
    \right) . \label{eq27vf}
  \end{equation}
  We have $\partial_y^2 \gamma_1 = 0$ and $\gamma_2 (y, t) = t G_0 \left(
  \frac{y}{t} \right)$, therefore
  \[ \partial_y^j \gamma_2 = t^{1 - j} G_0^{(j)} \left( \frac{y}{t} \right) .
  \]
  In the sum, we deal with the case $k = 0$ as in the case of the derivative
  in time. Otherwise, we estimate that for $k \geqslant 1$,
  \[ \left| \partial_y^k \left( \partial_y \gamma_2 \frac{I_0' (\gamma_2)}{I_0
     (\gamma_2)} \right) \right| \leqslant \frac{K (k)}{(1 + t)^k} . \]
  Indeed, derivatives either falls on term of the form $\partial_y^j
  \gamma_2$, adding a factor $\frac{1}{t}$, or it falls on a term of the form
  $\partial^j_y \left( \frac{I_0' (\gamma_2)}{I_0 (\gamma_2)} \right)$, and we
  have $\partial_y \left( \partial^j_y \left( \frac{I_0' (\gamma_2)}{I_0
  (\gamma_2)} \right) \right) = \partial_y \gamma_2 \partial^{j + 1}_y \left(
  \frac{I_0' (\gamma_2)}{I_0 (\gamma_2)} \right) .$ Since $\partial_y
  \gamma_2$ is bounded in time and $\gamma_2 \simeq t$, we get an additional
  factor $\frac{1}{t}$ from (\ref{eq26vf}).
  
  \
  
  We conclude by induction that
  \[ | \partial_y^n V | \leqslant \frac{K (n, \lambda_1, \lambda_2,
     \delta)}{(1 + t)^{1 / 2 + n / 2}} e^{\frac{- \delta}{(\lambda_1 -
     \lambda_2)^2} \frac{y^2}{t}} . \]
  Remark that the only term with the decay in time $\frac{1}{(1 + t)^{1 / 2 +
  n / 2}}$ is the case $k = 0$ in the sum (\ref{eq27vf}).
  
  We complete with the cross derivatives. Starting from
  \[ \partial_t V = V \left( \partial_t \gamma_1 + \partial_t \gamma_2 \left.
     \frac{I_0' (\gamma_2)}{I_0 (\gamma_2)} \right) \right., \]
  we have
  \[ \partial_t^{n + 1} \partial_y^k V = \sum_{i = 0}^n \sum_{j = 0}^k
     \left(\begin{array}{c}
       k\\
       j
     \end{array}\right) \left(\begin{array}{c}
       n\\
       i
     \end{array}\right) \partial_y^{k - j} \partial_t^{n - i} V \partial_y^j
     \partial_t^i \left( \partial_t \gamma_1 + \partial_t \gamma_2 \left.
     \frac{I_0' (\gamma_2)}{I_0 (\gamma_2)} \right) \right. . \]
  By a similar induction estimate, separating the case $i = 0, j = 0$ from the
  other, and by remarking that $\partial_t \partial_y \gamma_1 = 0$, we
  conclude the proof of this lemma.
\end{proof}

\section{Estimation on some convolutions}\label{sect3vf}

We recall the quantities for $\mu_1 > 0, \mu_2 < 0, \mu \in \mathbb{R}, a,
\delta, \gamma > 0, w \in L^1_{\tmop{loc}} (\mathbb{R}, \mathbb{R}^+)$ defined
in the introduction
\[ F_{a, \gamma, w}^{\mu_1, \mu_2} (x, t) = \frac{1}{(1 + t)^{\gamma}}
   \int_{\mu_2 t}^{\mu_1 t} e^{- a \frac{y^2}{1 + t}} w (x - y) d y \]
and
\[ G_{b, \delta, w}^{\mu} (x, t) = e^{- b t} \sup_{z \in [- \delta t, \delta
   t]} w (x - \mu t + z) . \]
Remark that since $w \geqslant 0,$we have $F_{a, \gamma, w}^{\mu_1, \mu_2}
\geqslant 0$ and $G_{b, \delta, w}^{\mu} \geqslant 0$. In this section, we are
going to estimate some convolutions involving these two quantities, that will
appear in the Duhamel formulation of the linear and nonlinear equation. They
are in subsection \ref{31usef} for the ones concerning $F_{a, \gamma,
w}^{\mu_1, \mu_2}$, then in subsection \ref{ss32vf} for the ones concerning
$G_{b, \delta, w}^{\mu}$. Furthermore, in subsection \ref{ss33vf}, we will
give bounds on $F_{a, \gamma, w}^{\mu_1, \mu_2}$ and $G_{b, \delta, w}^{\mu}$
in $L^1 (\mathbb{R})$ and $L^{\infty} (\mathbb{R})$.

\subsection{Convolutions involving $F_{a, \gamma, w}^{\mu_1,
\mu_2}$}\label{31usef}

\begin{lemma}
  \label{L33b4}Consider, for $\mu_1 > 0, \mu_2 < 0, \mu \in [\mu_2, \mu_1], a
  > 0, a_1 > 0, \gamma > 0, w \in L^1_{\tmop{loc}} (\mathbb{R}, \mathbb{R}^+)$
  the function
  \[ H (x, t) = \int_0^t e^{- a_1 (t - s)} F_{a, \gamma, w}^{\mu_1, \mu_2} (x
     - \mu (t - s), s) d s. \]
  Then, there exists $K (a_1, \gamma) > 0$ such that, if $a \leqslant
  \frac{a_1}{2 \mu^2}$, then
  \[ H (x, t) \leqslant K (a_1, \gamma) F_{a, \gamma, w}^{\mu_1, \mu_2} (x, t)
     . \]
\end{lemma}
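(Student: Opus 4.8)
The plan is to unfold the definition of $F^{\mu_1,\mu_2}_{a,\gamma,w}$, get rid of the translation by $\mu(t-s)$ via a change of the inner integration variable, and then --- all integrands being nonnegative since $w\ge 0$ --- interchange the order of integration (Tonelli), which reduces the assertion to a one-variable estimate.

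Concretely, $F^{\mu_1,\mu_2}_{a,\gamma,w}(x-\mu(t-s),s)=\frac{1}{(1+s)^\gamma}\int_{\mu_2 s}^{\mu_1 s}e^{-ay^2/(1+s)}w(x-\mu(t-s)-y)\,dy$, and the substitution $z=y+\mu(t-s)$ rewrites this as $\frac{1}{(1+s)^\gamma}\int_{\mu_2 s+\mu(t-s)}^{\mu_1 s+\mu(t-s)}e^{-a(z-\mu(t-s))^2/(1+s)}w(x-z)\,dz$. Since $\mu\in[\mu_2,\mu_1]$ and $t-s\ge 0$, the interval $[\mu_2 s+\mu(t-s),\mu_1 s+\mu(t-s)]$ is contained in $[\mu_2 t,\mu_1 t]$. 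Hence, by Tonelli, $H(x,t)=\int_{\mu_2 t}^{\mu_1 t}w(x-z)\,\psi(z,t)\,dz$, where $\psi(z,t)$ is the integral over a subset of $[0,t]$ of a nonnegative function of $s$; dropping that constraint on $s$ and setting $r=t-s$ bounds $\psi(z,t)$ by $\int_0^t e^{-a_1 r}(1+t-r)^{-\gamma}e^{-a(z-\mu r)^2/(1+t-r)}\,dr$. Comparing with the definition of $F^{\mu_1,\mu_2}_{a,\gamma,w}(x,t)$, it therefore suffices to establish, for all $z\in\mathbb R$ and $t\ge 0$,
\[ \int_0^t \frac{e^{-a_1 r}}{(1+t-r)^{\gamma}}\,e^{-a\frac{(z-\mu r)^2}{1+t-r}}\,dr \;\le\; \frac{K(a_1,\gamma)}{(1+t)^{\gamma}}\,e^{-a\frac{z^2}{1+t}}. \]

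The heart of the matter is the pointwise inequality: if $a\le\frac{a_1}{2\mu^2}$, then for all $0\le r\le t$ and $z\in\mathbb R$,
\[ \tfrac{a_1}{2}\,r + a\,\frac{(z-\mu r)^2}{1+t-r}\;\ge\; a\,\frac{z^2}{1+t}. \]
I would prove it by dividing by $a$, writing $D:=\frac{a_1}{2a}\ge\mu^2$ and recasting the claim as $\frac{(z-\mu r)^2}{1+t-r}\ge\frac{z^2}{1+t}-Dr$: the right-hand side being $\le 0$ makes it trivial, and otherwise (the case $r=0$ being an equality, so assume $r>0$) one cross-multiplies by $(1+t-r)(1+t)>0$, divides by $r$, and is left with a quadratic inequality in $z$ which holds for every $z$ precisely when its discriminant is $\le 0$, that is, precisely when $\mu^2\le D$. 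Granting this, $e^{-a_1 r}e^{-a(z-\mu r)^2/(1+t-r)}\le e^{-a_1 r/2}\,e^{-az^2/(1+t)}$, and the reserved factor $e^{-a_1 r/2}$ is exactly what will absorb the discrepancy between $(1+t-r)^{-\gamma}$ and $(1+t)^{-\gamma}$.

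Finally, after pulling $e^{-az^2/(1+t)}$ out, one is reduced to the elementary bound $\int_0^t e^{-a_1 r/2}(1+t-r)^{-\gamma}\,dr\le K(a_1,\gamma)(1+t)^{-\gamma}$, which I would obtain by splitting the integral at $r=t/2$: on $[0,t/2]$ one has $1+t-r\ge\frac12(1+t)$, so that piece is $\le 2^{\gamma}(1+t)^{-\gamma}\int_0^{\infty}e^{-a_1 r/2}\,dr$; on $[t/2,t]$ one uses $e^{-a_1 r/2}\le e^{-a_1 t/4}$, then $\int_{t/2}^t(1+t-r)^{-\gamma}\,dr=\int_1^{1+t/2}\rho^{-\gamma}\,d\rho\le t/2$ (substituting $\rho=1+t-r$), and $e^{-a_1 t/4}\,t\le C(a_1,\gamma)(1+t)^{-\gamma}$ (exponential beats polynomial). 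Substituting back yields the displayed one-variable estimate, hence $\psi(z,t)\le K(a_1,\gamma)(1+t)^{-\gamma}e^{-az^2/(1+t)}$, and therefore $H(x,t)\le K(a_1,\gamma)\,F^{\mu_1,\mu_2}_{a,\gamma,w}(x,t)$.

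The only genuinely delicate step is the pointwise exponent inequality --- in particular, the observation that the hypothesis $a\le\frac{a_1}{2\mu^2}$ provides a factor $2$ more damping than is strictly needed ($a\le\frac{a_1}{\mu^2}$ already makes the quadratic nonnegative), precisely so that half of $e^{-a_1 r}$ stays free to trade the time weight at $s$ against the time weight at $t$. If $\mu=0$ the hypothesis is vacuous and the inequality $\frac{a_1}{2}r+a\frac{z^2}{1+t-r}\ge a\frac{z^2}{1+t}$ is immediate; and since only $\mu^2$ appears, the sign of $\mu$ plays no role.
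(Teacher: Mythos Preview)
Your proof is correct and follows essentially the same route as the paper: the same change of variable $z=y+\mu(t-s)$ and interval inclusion, the same reduction to a pointwise exponent inequality leaving a spare factor $e^{-\frac{a_1}{2}(t-s)}$, and the same final time-convolution bound $\int_0^t \frac{e^{-\frac{a_1}{2}(t-s)}}{(1+s)^{\gamma}}\,ds\le K(a_1,\gamma)(1+t)^{-\gamma}$. The only cosmetic difference is that the paper verifies the exponent inequality via the explicit identity $\frac{y^2}{t-s}+\frac{(x-y)^2}{1+s}-\frac{x^2}{1+t}=\frac{(x(t-s)-y(1+t))^2}{(t-s)(1+s)(1+t)}$ (with $x=z$, $y=\mu(t-s)$), whereas you argue by the discriminant of the resulting quadratic in $z$; these are two equivalent ways of completing the same square.
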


\begin{proof}
  We have
  \[ H (x, t) = \int_0^t \frac{e^{- a_1 (t - s)}}{(1 + s)^{\gamma}}
     \int_{\mu_2 s}^{\mu_1 s} e^{- a \frac{y^2}{(1 + s)}} w (x - \mu (t - s) -
     y) d y d s. \]
  We first do the change of variable $z = y + \mu (t - s)$ leading to
  \begin{eqnarray*}
    &  & \int_{\mu_2 s}^{\mu_1 s} e^{- a \frac{y^2}{(1 + s)}} w (x - \mu (t -
    s) - y) d y\\
    & = & \int_{\mu_2 s + \mu (t - s)}^{\mu_1 s + \mu (t - s)} e^{- a
    \frac{(z - \mu (t - s))^2}{(1 + s)}} w (x - z) d z.
  \end{eqnarray*}
  Remark that for $s \in [0, t]$ and $\mu \in [\mu_2, \mu_1]$, we have
  \[ \mu_2 s + \mu (t - s) \geqslant \mu_2 t \]
  and
  \[ \mu_1 s + \mu (t - s) \leqslant \mu_1 t. \]
  We deduce that
  \[ \int_{\mu_2 s + \mu (t - s)}^{\mu_1 s + \mu (t - s)} e^{- a \frac{(z -
     \mu (t - s))^2}{(1 + s)}} w (x - z) d z \leqslant \int_{\mu_2 t}^{\mu_1
     t} e^{- a \frac{(z - \mu (t - s))^2}{(1 + s)}} w (x - z) d z \]
  and thus
  \[ H (x, t) \leqslant \int_0^t \frac{1}{(1 + s)^{\gamma}} \int_{\mu_2
     t}^{\mu_1 t} e^{- a \frac{z^2}{(1 + t)}} e^{f (z, s, t)} | w (x - z) | d
     z d s \]
  where
  \[ f (z, s, t) = - a_1 (t - s) - a \frac{(z - \mu (t - s))^2}{(1 + s)} + a
     \frac{z^2}{(1 + t)} . \]
  For $s \in [0, t]$, we deduce that if $a \leqslant \frac{a_1}{2 \mu^2}$,
  then
  \[ f (z, s, t) \leqslant a \left( - \frac{(\mu (t - s))^2}{t - s} - \frac{(z
     - \mu (t - s))^2}{(1 + s)} + \frac{z^2}{(1 + t)} \right) - \frac{a_1}{2}
     (t - s) . \]
  We compute in general that for $x, y \in \mathbb{R}$,
  \begin{eqnarray}
    &  & \frac{y^2}{t - s} + \frac{(x - y)^2}{1 + s} - \frac{x^2}{1 + t}
    \nonumber\\
    & = & \frac{(1 + s) (1 + t) y^2 + (x - y)^2 (t - s) (1 + t) - x^2 (1 + s)
    (t - s)}{(t - s) (1 + s) (1 + t)} \nonumber\\
    & = & \frac{(y^2 (1 + t)^2 + x^2 (t - s)^2 - 2 x y (t - s) (1 + t))}{(t -
    s) (1 + s) (1 + t)} \nonumber\\
    & = & \frac{(x (t - s) - y (1 + t))^2}{(t - s) (1 + s) (1 + t)} . 
    \label{31ca}
  \end{eqnarray}
  Applying it to $x = z, y = \mu (t - s)$, we deduce that for $s \in [0, t]$,
  \begin{eqnarray*}
    &  & - \frac{(\mu (t - s))^2}{t - s} - \frac{(z - \mu (t - s))^2}{(1 +
    s)} + \frac{z^2}{(1 + t)}\\
    & = & - \frac{(t - s) (z - \mu (1 + t))^2}{(1 + s) (1 + t)} \leqslant 0.
  \end{eqnarray*}
  We deduce that
  \begin{eqnarray*}
    H (x, t) & \leqslant & \int_{\mu_2 t}^{\mu_1 t} e^{- a \frac{z^2}{(1 +
    t)}} w (x - z) \int_0^t \frac{1}{(1 + s)^{\gamma}} e^{- \frac{a_1}{2} (t -
    s)} d s d z\\
    & \leqslant & \frac{K (a_1, \gamma)}{(1 + t)^{\gamma}} \int_{\mu_2
    t}^{\mu_1 t} e^{- a \frac{z^2}{(1 + t)}} w (x - z) d z.
  \end{eqnarray*}
  This concludes the proof of this lemma.
\end{proof}

\begin{lemma}
  \label{L34v4}Consider, for $\mu_1 > 0, \mu_2 < 0, a, a_0, \alpha, \beta,
  \delta > 0, w \in L^1_{\tmop{loc}} (\mathbb{R}, \mathbb{R}^+)$, with
  $\delta$ small compared to $\mu_1, | \mu_2 |$, the function
  \[ H (x, t) = \int_0^t \int_{\mu_2 (t - s)}^{\mu_1 (t - s)} \frac{e^{- a_0
     \frac{y^2}{(1 + t - s)}}}{(1 + t - s)^{\alpha}} F_{a, \beta, w}^{\mu_1 +
     \delta, \mu_2 - \delta} (x - y, s) d y d s. \]
  Then, there exists $K > 0$ depending on $\alpha, \beta, a_0, a, \mu_1,
  \mu_2$ and $\varepsilon (a_0) > 0$ such that if $a \leqslant \varepsilon
  (a_0)$, then
  \begin{eqnarray}
    &  & H (x, t) \nonumber\\
    & \leqslant & K \left( \frac{L_{\beta - 1 / 2} (t)}{(1 + t)^{\alpha}} +
    \frac{L_{\alpha - 1 / 2} (t)}{(1 + t)^{\beta}} \right) \int_{(\mu_2 -
    \delta) t}^{(\mu_1 + \delta) t} w (x - r) e^{- a \frac{r^2}{(1 + t)}} d r,
    \label{eq32vf}
  \end{eqnarray}
  where
  \[ L_{\gamma} (t) \assign \int_0^t \frac{d s}{(1 + s)^{\gamma}} . \]
\end{lemma}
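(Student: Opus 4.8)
The plan is to reduce the double integral to a spatial convolution against $w$ by absorbing, via an exponent comparison like the one used in Lemma~\ref{L33b4}, all the Gaussian-type factors into a single Gaussian $e^{-a r^2/(1+t)}$ with the remaining time integrals producing the two terms in the prefactor. First I would write $H$ out explicitly, expanding $F_{a,\beta,w}^{\mu_1+\delta,\mu_2-\delta}(x-y,s)$:
\[
H(x,t) = \int_0^t \int_{\mu_2(t-s)}^{\mu_1(t-s)} \frac{e^{-a_0\frac{y^2}{1+t-s}}}{(1+t-s)^{\alpha}} \frac{1}{(1+s)^{\beta}} \int_{(\mu_2-\delta)s}^{(\mu_1+\delta)s} e^{-a\frac{z^2}{1+s}} w(x-y-z)\,dz\,dy\,ds.
\]
Then I would substitute $r = y+z$ in the innermost integral. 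The new variable $r$ ranges over an interval contained in $[(\mu_2-\delta)t,(\mu_1+\delta)t]$ — this is the same geometric observation as in Lemma~\ref{L33b4}, using $\mu_2(t-s) + (\mu_2-\delta)s \ge (\mu_2-\delta)t$ and symmetrically on the right — so after enlarging the domain to $[(\mu_2-\delta)t,(\mu_1+\delta)t]$ and applying Fubini, everything reduces to estimating, for fixed $r$,
\[
\int_0^t \int \frac{e^{-a_0\frac{y^2}{1+t-s}}}{(1+t-s)^\alpha}\,\frac{e^{-a\frac{(r-y)^2}{1+s}}}{(1+s)^\beta}\,dy\,ds,
\]
and showing it is bounded by $K\bigl(\tfrac{L_{\beta-1/2}(t)}{(1+t)^\alpha} + \tfrac{L_{\alpha-1/2}(t)}{(1+t)^\beta}\bigr) e^{-a r^2/(1+t)}$.

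**Key steps.** The heart of the matter is the pointwise exponent inequality: for $a$ small enough relative to $a_0$,
\[
-a_0\frac{y^2}{1+t-s} - a\frac{(r-y)^2}{1+s} \le -a\frac{r^2}{1+t} - c\frac{y^2}{1+t-s}
\]
for some $c>0$. The cleanest route is the elementary identity analogous to \eqref{31ca}: since $a \le a_0$, bound $a_0 \ge a$ on the first term only up to a fraction, i.e. write $a_0 y^2/(1+t-s) \ge a y^2/(1+t-s) + (a_0-a)y^2/(1+t-s)$, use the convexity/Cauchy–Schwarz bound $\frac{y^2}{1+t-s} + \frac{(r-y)^2}{1+s} \ge \frac{r^2}{1+t}$ (valid because the Gaussian semigroup is a semigroup — or directly as in \eqref{31ca} with the roles of the time increments adjusted), and keep $(a_0-a)y^2/(1+t-s)$ as the genuine decay in $y$. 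Taking $a \le a_0/2$ gives $c = a_0/2$. After this, the $y$-integral of $e^{-c y^2/(1+t-s)}$ contributes a factor $(1+t-s)^{1/2}$, leaving
\[
H(x,t) \le K \int_{(\mu_2-\delta)t}^{(\mu_1+\delta)t} w(x-r) e^{-a\frac{r^2}{1+t}} \left(\int_0^t \frac{(1+t-s)^{1/2}}{(1+t-s)^{\alpha}(1+s)^{\beta}}\,ds\right) dr.
\]
The final step is the standard "beta-function" split of $\int_0^t (1+t-s)^{1/2-\alpha}(1+s)^{-\beta}ds$: on $s\in[0,t/2]$ bound $(1+t-s)^{1/2-\alpha} \le K(1+t)^{1/2-\alpha}$ (using $1+t-s \ge (1+t)/2$) and pull it out, leaving $\int_0^{t/2}(1+s)^{-\beta}ds \le L_\beta(t) \le K L_{\beta-1/2}(t)$ times... — more carefully, one gets $(1+t)^{-\alpha}$ times $(1+t)^{1/2}\int_0^{t/2}(1+s)^{-\beta}ds$; matching the claimed form $L_{\beta-1/2}(t)/(1+t)^\alpha$ requires noting $(1+t)^{1/2}\int_0^{t/2}(1+s)^{-\beta}ds \le K\int_0^t (1+s)^{1/2-\beta}\,\min(\ldots)$ — in practice the bound $(1+t)^{1/2-\beta}\cdot t \le \ldots$ and on $s\in[t/2,t]$ the symmetric estimate gives the $L_{\alpha-1/2}(t)/(1+t)^\beta$ term; one checks the two pieces combine into the stated sum.

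**Main obstacle.** The exponent inequality itself is routine once set up as above, and the domain-inclusion step is a verbatim copy of the idea in Lemma~\ref{L33b4}. The genuinely delicate point is the \emph{bookkeeping of the time integral} to land it in exactly the form $\frac{L_{\beta-1/2}(t)}{(1+t)^\alpha} + \frac{L_{\alpha-1/2}(t)}{(1+t)^\beta}$: one must be careful that the half-power $(1+t-s)^{1/2}$ gained from the $y$-integration is distributed correctly between the two regimes $s\le t/2$ and $s\ge t/2$, and that when $\alpha$ or $\beta$ is close to $1/2$ (so $L_{\alpha-1/2}$ or $L_{\beta-1/2}$ is logarithmic or growing) the estimate still closes without spurious extra powers of $t$. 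The constant $\varepsilon(a_0)$ in the statement is precisely the threshold $a \le a_0/2$ (or whatever fraction makes the leftover $y$-Gaussian integrable with room to spare), so I would state it as such and not worry about optimizing it.
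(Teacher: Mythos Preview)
Your setup --- the change of variable $r=y+z$, the domain inclusion $[(\mu_2-\delta)s+y,(\mu_1+\delta)s+y]\subset[(\mu_2-\delta)t,(\mu_1+\delta)t]$, and the exponent comparison via the identity \eqref{31ca} --- matches the paper's proof exactly. The gap is in the final time-integral step, and it is a real gap, not just a matter of care.

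After extracting $e^{-a r^2/(1+t)}$ you keep only the residual $e^{-c\,y^2/(1+t-s)}$ for the $y$-integration, which produces a factor $(1+t-s)^{1/2}$ for \emph{every} $s$. On $s\in[0,t/2]$ this leaves you with $(1+t)^{1/2-\alpha}\int_0^{t/2}(1+s)^{-\beta}\,ds$, and the inequality $(1+t)^{1/2}L_\beta(t)\le K\,L_{\beta-1/2}(t)$ you would then need is simply false in general (take $\beta=2$: the left side grows like $t^{1/2}$, the right side is bounded). You correctly flag this as the delicate point but do not resolve it, and it cannot be resolved by bookkeeping alone --- your residual is too coarse on the range $s\le t/2$.

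The missing idea, which the paper uses, is that the full residual exponent is a quadratic in $y$ with leading coefficient $A=a_0/(1+t-s)+a/(1+s)$, and for $s\le t/2$ one must exploit $A\ge a/(1+s)$ rather than $A\ge a_0/(1+t-s)$. Equivalently: on $s\le t/2$ keep the term $-a(r-y)^2/(1+s)$, not $-c\,y^2/(1+t-s)$, as the source of decay in $y$; since $f\le 0$ has already been established, the Gaussian $y$-integral is then bounded by $K(1+s)^{1/2}$ instead of $K(1+t-s)^{1/2}$. The time integral on $[0,t/2]$ becomes
\[
\int_0^{t/2}\frac{1}{(1+t-s)^{\alpha}(1+s)^{\beta-1/2}}\,ds\le \frac{K}{(1+t)^{\alpha}}\,L_{\beta-1/2}(t),
\]
which is exactly the first term in the stated bound. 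On $s\ge t/2$ your $(1+t-s)^{1/2}$ is the correct choice and yields the second term $L_{\alpha-1/2}(t)/(1+t)^\beta$.
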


Equation (\ref{eq32vf}) is an estimate where the right hand side if $F_{a,
\gamma, w}^{\mu_1 + \delta, \mu_2 - \delta}$ for some $\gamma \in \mathbb{R}$.
In particular, if $\alpha, \beta \neq \frac{3}{2}$, then
\[ \frac{L_{\beta - 1 / 2} (t)}{(1 + t)^{\alpha}} + \frac{L_{\alpha - 1 / 2}
   (t)}{(1 + t)^{\beta}} \leqslant \frac{K}{(1 + t)^{\alpha + \beta -
   \frac{3}{2}}} \]
for $K$ depending on $\alpha, \beta$ and thus
\begin{equation}
  H (x, t) \leqslant K F_{a, \alpha + \beta - \frac{3}{2}, w}^{\mu_1 + \delta,
  \mu_2 - \delta} (x, t) .
\end{equation}
If $\alpha = \frac{3}{2}$ or $\beta = \frac{3}{2}$, then there is an
additional factor $\ln (1 + t)$. This is the reason why in the estimates of
Theorem \ref{NLconstlocTheorem} there is a small loss of decay in time for the
derivatives of $u$ and $\rho$.

\begin{proof}
  We have
  \[ H (x, t) = \int_0^t \int_{\mu_2 (t - s)}^{\mu_1 (t - s)} \frac{e^{- a_0
     \frac{y^2}{(1 + t - s)}}}{(1 + t - s)^{\alpha} (1 + s)^{\beta}}
     \int_{(\mu_2 - \delta) s}^{(\mu_1 + \delta) s} e^{- a \frac{z^2}{(1 +
     s)}} w (x - y - z) d z d y d s. \]
  By the change of variable $r = y + z,$ we have
  \[ \int_{(\mu_2 - \delta) s}^{(\mu_1 + \delta) s} e^{- a \frac{z^2}{(1 +
     s)}} w (x - y - z) d z = \int_{(\mu_2 - \delta) s + y}^{(\mu_1 + \delta)
     s + y} e^{- a \frac{(r - y)^2}{(1 + s)}} w (x - r) d r. \]
  For $y \in [\mu_2 (t - s), \mu_1 (t - s)]$ and $s \in [0, t]$, we have that
  \[ (\mu_2 - \delta) s + y \geqslant (\mu_2 - \delta) t \text{ and } (\mu_1 +
     \delta) s + y \leqslant (\mu_1 + \delta) t, \]
  therefore
  \begin{eqnarray*}
    &  & H (x, t)\\
    & \leqslant & \int_{(\mu_2 - \delta) t}^{(\mu_1 + \delta) t} w (x - r)
    e^{- a \frac{r^2}{(1 + t)}} \int_0^t \int_{\mu_2 (t - s)}^{\mu_1 (t - s)}
    \frac{e^{- \frac{a_0}{2} \frac{y^2}{(1 + t - s)}}}{(1 + t - s)^{\alpha} (1
    + s)^{\beta}} e^{f (r, y, t, s)} d y d s d r
  \end{eqnarray*}
  with
  \[ f (r, y, t, s) = a \frac{r^2}{(1 + t)} - \frac{a_0}{2} \frac{y^2}{(1 + t
     - s)} - a \frac{(r - y)^2}{(1 + s)} . \]
  If $t \leqslant 3$, then the domain of integration of $r, y$ and $s$ are all
  bounded and then for $r \in [(\mu_2 - \delta) t, (\mu_1 + \delta) t]$ we
  have
  \[ \int_0^t \int_{\mu_2 (t - s)}^{\mu_1 (t - s)} \frac{e^{- \frac{a_0}{2}
     \frac{y^2}{(1 + t - s)}}}{(1 + t - s)^{\alpha} (1 + s)^{\beta}} e^{f (r,
     y, t, s)} d y d s \leqslant K (\mu_1, \mu_2, a_0, a), \]
  concluding the proof. Now, if $t \geqslant 3$, we look first at the case $s
  \in [t - 2, t]$. Then,
  \[ f (r, y, t, s) \leqslant a \frac{r^2}{1 + t} - \frac{a_0}{6} y^2 -
     \frac{a}{t - 1} (r - y)^2 \leqslant 0 \]
  given that $t \geqslant 3$ and $a \leqslant \varepsilon (a_0)$ for some
  function $\varepsilon (a_0) > 0$. In that case, $y \in [2 \mu_2, 2 \mu_1]$
  and therefore
  \[ \int_{t - 2}^t \int_{\mu_2 (t - s)}^{\mu_1 (t - s)} \frac{e^{-
     \frac{a_0}{2} \frac{y^2}{(1 + t - s)}}}{(1 + t - s)^{\alpha} (1 +
     s)^{\beta}} e^{f (r, y, t, s)} d y d s \leqslant \frac{K (\mu_1, \mu_2,
     a_0, a)}{(1 + t)^{\beta}} . \]
  Now, if $t \geqslant 3$ and $s \in [0, t - 2]$, which is the main case, we
  have
  \[ f (r, y, t, s) \leqslant a \frac{r^2}{(1 + t)} - \frac{a_0}{6}
     \frac{y^2}{t - s} - a \frac{(r - y)^2}{(1 + s)} \]
  and by equation (\ref{31ca}), we deduce that for $a \leqslant \varepsilon
  (a_0)$, we have
  \[ f (r, y, t, s) \leqslant 0. \]
  Therefore, for the case $t - 2 \geqslant s \geqslant t / 2$,
  \begin{eqnarray*}
    &  & \int_{t / 2}^{t - 2} \int_{\mu_2 (t - s)}^{\mu_1 (t - s)} \frac{e^{-
    \frac{a_0}{2} \frac{y^2}{(1 + t - s)}}}{(1 + t - s)^{\alpha} (1 +
    s)^{\beta}} e^{f (r, y, t, s)} d y d s\\
    & \leqslant & \int_{t / 2}^{t - 2} \int_{\mu_2 (t - s)}^{\mu_1 (t - s)}
    \frac{e^{- \frac{a_0}{2} \frac{y^2}{(1 + t - s)}}}{(1 + t - s)^{\alpha} (1
    + s)^{\beta}} d y d s\\
    & \leqslant & K (a_0) \int_{t / 2}^t \frac{1}{(1 + t - s)^{\alpha - 1 /
    2} (1 + s)^{\beta}} d s\\
    & \leqslant & \frac{K (a_0)}{(1 + t)^{\beta}} L_{\alpha - 1 / 2} (t) .
  \end{eqnarray*}
  Now, remark that if $s \leqslant t / 2$ and $a \leqslant \varepsilon (a_0)$,
  there exists $a_2 (a_0, a) > 0$ such that
  \[ f (r, y, t, s) \leqslant - a_2 \frac{y^2}{(1 + s)} . \]
  We deduce that
  \begin{eqnarray*}
    &  & \int_0^{t / 2} \int_{\mu_2 (t - s)}^{\mu_1 (t - s)} \frac{e^{-
    \frac{a_0}{2} \frac{y^2}{(1 + t - s)}}}{(1 + t - s)^{\alpha} (1 +
    s)^{\beta}} e^{f (r, y, t, s)} d y d s\\
    & \leqslant & \int_0^{t / 2} \int_{\mu_2 (t - s)}^{\mu_1 (t - s)}
    \frac{e^{- a_2 \frac{y^2}{(1 + s)}}}{(1 + t - s)^{\alpha} (1 + s)^{\beta}}
    d y d s\\
    & \leqslant & \int_0^{t / 2} \frac{1}{(1 + t - s)^{\alpha} (1 + s)^{\beta
    - 1 / 2}} d s\\
    & \leqslant & \frac{K (\alpha)}{(1 + t)^{\alpha}} L_{\beta - 1 / 2} (t) .
  \end{eqnarray*}
  This concludes the proof of this lemma.
\end{proof}

\subsection{Convolution with a boundary term}\label{ss32vf}

We recall the notation
\[ F_{a, \gamma, w}^{\mu_1, \mu_2} (x, t) = \frac{1}{(1 + t)^{\gamma}}
   \int_{\mu_2 t}^{\mu_1 t} e^{- a \frac{y^2}{1 + t}} w (x - y) d y. \]
We are interested here in the quantity for $b, \delta > 0,$ $\mu \in
\mathbb{R}$ and $w \in L^1_{\tmop{loc}} (\mathbb{R}, \mathbb{R}^+)$:
\[ G_{b, \delta, w}^{\mu} (x, t) = e^{- b t} \sup_{y \in [- \delta t, \delta
   t]} w (x - \mu t + y) \geqslant 0. \]
\begin{lemma}
  \label{L36v4}Consider $b, \delta > 0, \mu \in \{ \mu_1, \mu_2 \}, w \in
  L^1_{\tmop{loc}} (\mathbb{R}, \mathbb{R}^+)$ and the function
  \[ G_{b, \delta, w}^{\mu} (x, t) = e^{- b t} \sup_{z \in [- \delta t, \delta
     t]} w (x - \mu t + z) . \]
  Then, for any $k, n \in \mathbb{N}$, there exists $C > 0$ depending on
  $\mu_1, \mu_2, b, n, k$ and $a > 0$ depending on $\mu_1, \mu_2, b$ such that
  \begin{eqnarray*}
    &  & \int_0^t \int_{\mu_2 (t - s)}^{\mu_1 (t - s)} | \partial_x^k
    \partial_t^n V (y, t - s) G_{b, \delta, w}^{\mu} (x - y, s) | d y d s\\
    & \leqslant & C F_{a, \frac{1}{2} + \frac{k}{2} + n, w}^{\mu_1 + \delta,
    \mu_2 - \delta} (x, t) .
  \end{eqnarray*}
\end{lemma}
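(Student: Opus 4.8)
The plan is to combine the pointwise kernel estimate of Lemma~\ref{Vest} with a change of variables that turns the supremum built into $G$ into an ordinary convolution with $w$, and then to reduce the whole bound to a one‑dimensional integral in the time variable, tamed by the exponential weight $e^{-bt}$. I would take $\mu=\mu_1$ (the case $\mu=\mu_2$ is symmetric, $\mu_2$ playing below the role of $\mu_1$) and set $\gamma:=\frac12+\frac k2+n$. By Lemma~\ref{Vest} applied to the kernel $V$ of Proposition~\ref{dwesol} (with $\lambda_1=\mu_1$, $\lambda_2=\mu_2$), there are $a_0,C>0$ depending only on $\mu_1,\mu_2$ (and the damping of $V$) with $|\partial_x^k\partial_t^n V(y,\tau)|\leqslant C(1+\tau)^{-\gamma}e^{-a_0 y^2/(1+\tau)}$ on $y\in[\mu_2\tau,\mu_1\tau]$. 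Inserting this and the definition of $G_{b,\delta,w}^{\mu_1}$, the quantity to be bounded, call it $I(x,t)$, satisfies
\[ I(x,t)\leqslant C\int_0^t\frac{e^{-bs}}{(1+t-s)^{\gamma}}\int_{\mu_2(t-s)}^{\mu_1(t-s)}e^{-a_0\frac{y^2}{1+t-s}}\sup_{|z|\leqslant\delta s}w(x-y-\mu_1 s+z)\,dy\,ds. \]

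For fixed $s$ I would then, in the inner integral, replace $\sup_{|z|\leqslant\delta s}w(x-y-\mu_1 s+z)$ by the value of $w$ at the extremal point and change variables from $y$ to $r:=y+\mu_1 s-z$ (Jacobian $1$); since $w\geqslant0$, enlarging the range of $r$ to all attainable values only increases the integral. An elementary check using $\mu_1>0>\mu_2$, $0\leqslant s\leqslant t$ and $|z|\leqslant\delta s\leqslant\delta t$ gives $r\in[(\mu_2-\delta)t,(\mu_1+\delta)t]$ and $|y-r|\leqslant(\mu_1+\delta)s$, so that
\[ I(x,t)\leqslant C\int_{(\mu_2-\delta)t}^{(\mu_1+\delta)t}w(x-r)\,\Psi(r,t)\,dr,\qquad \Psi(r,t):=\int_0^t\frac{e^{-bs}}{(1+t-s)^{\gamma}}e^{-a_0\frac{(r-\mu_1 s+\theta(r,s))^2}{1+t-s}}\,ds, \]
with $|\theta(r,s)|\leqslant(\mu_1+\delta)s$. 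Comparing with the definition of $F_{a,\gamma,w}^{\mu_1+\delta,\mu_2-\delta}$, it then remains only to show $\Psi(r,t)\leqslant K(1+t)^{-\gamma}e^{-ar^2/(1+t)}$ for $r$ in that range, a small $a>0$, and $K=K(\mu_1,\mu_2,b,n,k)$.

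To estimate $\Psi$ I would split the $s$‑integral at $s=\sqrt{1+t}$. On $[0,\sqrt{1+t}]$ one has $1+t-s\geqslant\frac12(1+t)$ and $|\mu_1 s-\theta(r,s)|\leqslant c_0\sqrt{1+t}$ with $c_0:=2\mu_1+\delta$; combined with $(r-u)^2\geqslant\frac12 r^2-u^2$ this yields $e^{-a_0(r-\mu_1 s+\theta)^2/(1+t-s)}\leqslant e^{a_0c_0^2}e^{-\frac{a_0}{2}r^2/(1+t)}$, and since $\int_0^\infty e^{-bs}\,ds=\frac1b$ this portion is bounded by a constant (depending on $\mu_1,\mu_2,b,\gamma$) times $(1+t)^{-\gamma}e^{-\frac{a_0}{2}r^2/(1+t)}$, which is of the desired form once $a\leqslant a_0/2$. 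On $[\sqrt{1+t},t]$ the prefactor $e^{-bs}$ is exponentially small but, along the boundary of the light cone where $|r|\sim t$, not by itself small enough to beat $(1+t)^{-\gamma}e^{-ar^2/(1+t)}$; there I would substitute $u=t-s$, $\rho:=\mu_1 t-r$ and run a routine case analysis according to whether $\rho$ is much smaller than, comparable to, or much larger than $\mu_1 u$ — using the Gaussian decay in $u$ in the first and third cases, and in the intermediate (near‑characteristic) case the gain from $e^{-bt}$ together with the integration over the Gaussian's width — which shows that, for $a$ small enough in terms of $a_0$, $b$, $\mu_1$, $\mu_2$, this portion too is $\leqslant K(1+t)^{-\gamma}e^{-ar^2/(1+t)}$. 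Together these bound $\Psi$, hence $I$.

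The main obstacle is not any single computation but the bookkeeping forced by two features. First, trading the supremum over the window $[-\delta s,\delta s]$ for an honest convolution with $w$ is exactly what compels the output transport bounds to be widened from $\mu_i$ to $\mu_i\pm\delta$ and the Gaussian constant to be lowered from $a_0$ to $a$. Second, because the translation $\mu_1 s$ inside the Gaussian is of order $t$ when $s$ is of order $t$, the factor $e^{-a_0(r-\mu_1 s)^2/(1+t-s)}$ can be as large as $e^{ct}$ near the edge of the light cone, so the whole purpose of the choice of $a$ (and of the comparison between $a_0$, $b$ and $\mu_1,\mu_2$ it encodes) is to make the regime $s\sim t$ harmless. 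Everything else reduces to elementary Gaussian and exponential estimates of the kind already used in Lemmas~\ref{Vest} and \ref{L34v4}.
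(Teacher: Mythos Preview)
Your overall plan --- insert the pointwise kernel bound from Lemma~\ref{Vest}, convert the $\sup_z$ in $G$ into an integral against $w$ by a spatial change of variables, then reduce to a scalar time integral --- is the same route the paper takes. The paper, however, splits the $s$-integral at $s=t/2$ rather than $s=\sqrt{1+t}$; with that split the large-$s$ regime is handled in one line (since $e^{-bs}\leqslant e^{-bt/4}e^{-bs/2}$ there, and $t\gtrsim y^2/(1+t)$ on the cone), so the ``routine case analysis'' you leave unspecified is avoided entirely.

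The step that does not go through as written is your change of variables. You replace $\sup_{|z|\leqslant\delta s}w(x-y-\mu_1 s+z)$ by its value at the extremal $z=z^\ast(y,s)$ and then set $r=y+\mu_1 s-z^\ast$, claiming ``Jacobian $1$''. But $z^\ast$ depends on $y$, so the Jacobian of $y\mapsto r$ is $1-\partial_y z^\ast$, which is neither equal to $1$ nor bounded away from zero in general; the map need not even be injective. Concretely, for $w=N\mathbf{1}_{[0,1/N]}$ one has $\sup_{|z|\leqslant\delta s}w(\cdot+z)=N$ on an interval of length $2\delta s+1/N$, so the inner $y$-integral at $x=0$ picks up a factor of order $N\delta s$ that is absent from $F$, and the full double integral is of order $N\delta\cdot F_{a,\gamma,w}^{\mu_1+\delta,\mu_2-\delta}(0,t)$, not $C\cdot F$ with $C$ independent of $w$. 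The paper's own proof passes the $\sup_z$ from $w$ onto the kernel $V$ at the analogous point (writing $\int|V(y)|\sup_z w(\ldots)\,dy\leqslant\sup_z\int|V(y-\mu s+z)|w(x-y)\,dy$), which runs into the same obstruction: in general $\int g\cdot\sup_z h(\cdot,z)\geqslant\sup_z\int g\cdot h(\cdot,z)$, not $\leqslant$. In the paper's application $w=w_i\in C^2\cap L^\infty$ and the estimate is likely recoverable with constants depending on those norms, but for arbitrary $w\in L^1_{\mathrm{loc}}(\mathbb{R},\mathbb{R}^+)$ the passage from the maximal function of $w$ back to $w$ needs a genuine argument beyond a change of variables.
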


\begin{proof}
  First, by a change of variable on $y$, we compute that
  \begin{eqnarray*}
    &  & \int_0^t \int_{\mu_2 (t - s)}^{\mu_1 (t - s)} | \partial_x^k
    \partial_t^n V (y, t - s) e^{- b s} \sup_{z \in [- \delta s, \delta s]} w
    (x - y - \mu s + z) | d y d s\\
    & \leqslant & \int_0^t \sup_{z \in [- \delta s, \delta s]} \int_{\mu_2 (t
    - s) + \mu s - z}^{\mu_1 (t - s) + \mu s - z} | \partial_x^k \partial_t^n
    V (y - \mu s + z, t - s) | e^{- b s} w (x - y) d y d s.
  \end{eqnarray*}
  Remark that for $\mu \in \{ \mu_1, \mu_2 \}$ and $s \in [0, t]$, we have
  $\mu_1 (t - s) + \mu s \leqslant \mu_1 t$ and $\mu_2 (t - s) + \mu s
  \geqslant \mu_2 t$. We deduce that, denoting
  \[ A = \int_0^t \int_{\mu_2 (t - s)}^{\mu_1 (t - s)} | \partial_x^k
     \partial_t^n V (y, t - s) e^{- b s} \sup_{z \in [- \delta s, \delta s]} w
     (x - y - \mu s + z) | d y d s, \]
  that
  \[ A \leqslant \int_{(\mu_2 - \delta) t}^{(\mu_1 + \delta) t} w (x - y)
     \left( \int_0^t \sup_{z \in [- \delta s, \delta s]} | \partial_x^k
     \partial_t^n V (y - \mu s + z, t - s) | e^{- b s} d s \right) d y. \]
  We recall from Lemma \ref{Vest} that there exists $a_0 > 0$ such that
  \[ | \partial_x^k \partial_t^n V (y - \mu s + z, t - s) | \leqslant \frac{K
     (k, n, \mu_1, \mu_2)}{(1 + t - s)^{\frac{1}{2} + \frac{k}{2} + n}} e^{-
     a_0 \frac{(y - \mu s + z)^2}{1 + t - s}} . \]
  Now, first for the case $s \geqslant t / 2$, we estimate using simply $|
  \partial_x^k \partial_t^n V (y - \mu s, t - s) | \leqslant K (k, n)$ that
  \begin{eqnarray*}
    &  & \int_{\frac{t}{2}}^t \sup_{z \in [- \delta s, \delta s]} |
    \partial_x^k \partial_t^n V (y - \mu s + z, t - s) | e^{- b s} d s\\
    & \leqslant & K (k, n, \mu_1, \mu_2) e^{- \frac{b}{4} t} \int_{t / 2}^t
    e^{- \frac{b}{2} s} d s\\
    & \leqslant & K (k, n, \mu_1, \mu_2, b) e^{- \frac{b}{4} t},
  \end{eqnarray*}
  and thus, using that for $y \in [(\mu_2 - \delta) t, (\mu_1 + \delta) t]$,
  we have $t \geqslant K (\mu_1, \mu_2, \delta) \frac{y^2}{(1 + t)}$, we
  deduce
  \begin{eqnarray*}
    &  & \int_{(\mu_2 - \delta) t}^{(\mu_1 + \delta) t} w (x - y) \left(
    \int_{\frac{t}{2}}^t \sup_{z \in [- \delta s, \delta s]} | \partial_x^k
    \partial_t^n V (y - \mu s + z, t - s) | e^{- b s} d s \right) d y\\
    & \leqslant & K (k, n, \mu_1, \mu_2, b) e^{- \frac{b}{8} t} \int_{(\mu_2
    - \delta) t}^{(\mu_1 + \delta) t} w (x - y) e^{- \frac{b}{4} t} d y\\
    & \leqslant & \frac{K (k, n, \mu_1, \mu_2, b)}{(1 + t)^{\frac{1}{2} +
    \frac{k}{2} + n}} \int_{(\mu_2 - \delta) t}^{(\mu_1 + \delta) t} e^{-
    \tilde{a} \frac{y^2}{1 + t}} w (x - y) d y\\
    & \leqslant & K (k, n, \mu_1, \mu_2, b) F_{\tilde{a}, \frac{1}{2} +
    \frac{k}{2} + n, w}^{\mu_1 + \delta, \mu_2 - \delta} (x, t)
  \end{eqnarray*}
  for some small $\tilde{a} > 0$ depending on $b, \mu_1, \mu_2$. Now, for the
  case $s \leqslant t / 2$, we estimate
  \begin{eqnarray*}
    &  & \int_0^{t / 2} \sup_{z \in [- \delta s, \delta s]} | \partial_x^k
    \partial_t^n V (y - \mu s + z, t - s) | e^{- b s} d s\\
    & \leqslant & \frac{K (k, n, \mu_1, \mu_2)}{(1 + t)^{\frac{1}{2} +
    \frac{k}{2} + n}} \int_0^{t / 2} \sup_{z \in [- \delta s, \delta s]} e^{-
    \frac{a_0}{4} \frac{(y - \mu s + z)^2}{1 + t} - \frac{b}{2} s} e^{-
    \frac{b}{2} s} d s,
  \end{eqnarray*}
  and we check that for $s \in [0, t / 2], y \in [\mu_2 t, \mu_1 t], z \in [-
  \delta s, \delta s]$ there exists $a' > 0$ depending on $\mu_1, \mu_2, b,
  \delta$ such that
  \[ - \frac{a_0}{4} \frac{(y - \mu s + z)^2}{1 + t} - \frac{b}{2} s \leqslant
     - a' \frac{y^2}{1 + t} . \]
  We deduce that
  \begin{eqnarray*}
    &  & \int_0^{t / 2} \sup_{z \in [- \delta s, \delta s]} | \partial_x^k
    \partial_t^n V (y - \mu s + z, t - s) | e^{- b s} d s\\
    & \leqslant & \frac{K (k, n, \mu_1, \mu_2)}{(1 + t)^{\frac{1}{2} +
    \frac{k}{2} + n}} e^{- a' \frac{y^2}{1 + t}} \int_0^{t / 2} e^{-
    \frac{b}{2} s} d s\\
    & \leqslant & \frac{K (k, n, \mu_1, \mu_2, b)}{(1 + t)^{\frac{1}{2} +
    \frac{k}{2} + n}} e^{- a' \frac{y^2}{1 + t}}
  \end{eqnarray*}
  and therefore
  \begin{eqnarray*}
    &  & \int_{(\mu_2 - \delta) t}^{(\mu_1 + \delta) t} w (x - y) \left(
    \int_0^{t / 2} \sup_{z \in [- \delta s, \delta s]} | \partial_x^k
    \partial_t^n V (y - \mu s + z, t - s) | e^{- b s} d s \right) d y\\
    & \leqslant & \frac{K (k, n, \mu_1, \mu_2, b, \delta)}{(1 +
    t)^{\frac{1}{2} + \frac{k}{2} + n}} \int_{(\mu_2 - \delta) t}^{(\mu_1 +
    \delta) t} w (x - y) e^{- a' \frac{y^2}{1 + t}} d y\\
    & \leqslant & K (k, n, \mu_1, \mu_2, b) F_{a', \frac{1}{2} + \frac{k}{2}
    + n, w}^{\mu_1 + \delta, \mu_2 - \delta} (x, t) .
  \end{eqnarray*}
  We conclude that
  \[ A \leqslant K (k, n, \mu_1, \mu_2, b) F_{a, \frac{1}{2} + \frac{k}{2} +
     n, w}^{\mu_1 + \delta, \mu_2 - \delta} (x, t) \]
  for $a = \min (a', \tilde{a})$.
\end{proof}

\begin{lemma}
  \label{L37b4}Consider $b, \delta > 0, \mu, \mu' \in \{ \mu_1, \mu_2 \}$ with
  $b < a_0$ ($a_0$ is defined in Lemma \ref{Vest}), $\delta < \frac{1}{2} \min
  (\mu_1, | \mu_2 |)$ and the function
  \[ G_{b, \delta, w}^{\mu} (x, t) = e^{- b t} \sup_{z \in [- \delta t, \delta
     t]} w (x - \mu t + z) . \]
  Then, there exists $C > 0$ depending on $k, n, \mu_1, \mu_2$ and $a > 0$
  depending on $\mu_1, \mu_2, b$ such that
  \begin{eqnarray*}
    &  & \int_0^t | \partial_x^k \partial_t^n V (\mu' (t - s), t - s) G_{b,
    \delta, w}^{\mu} (x - \mu' (t - s), s) | d s\\
    & \leqslant & C \left( G_{b, \delta, w}^{\mu'} (x, t) + F_{a, \frac{1}{2}
    + \frac{k}{2} + n, w}^{\mu_1 + \delta, \mu_2 - \delta} (x, t) \right) .
  \end{eqnarray*}
\end{lemma}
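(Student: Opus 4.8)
The plan is to use that $V$ and all its derivatives decay exponentially along the two edges $y=\mu'(t-s)$ of the light cone, at a rate strictly larger than $b$ — and this strict inequality is exactly what the hypothesis $b<a_0$ provides. By Lemma \ref{L24} together with the power-series form of $V$ established right after Lemma \ref{Vest1}, the function $\partial_x^k\partial_t^n V(\mu'(t-s),t-s)$ equals a polynomial in $t-s$ times $e^{\gamma_1(\mu'(t-s),t-s)}$, so for every $k,n$,
\[ \bigl|\partial_x^k\partial_t^n V(\mu'(t-s),t-s)\bigr|\le C\,(1+t-s)^{-\frac12-\frac k2-n}e^{-c_V(t-s)}\qquad(t-s\ge 0),\]
for any $c_V$ below the edge decay rate (the polynomial prefactor being the one of Lemma \ref{Vest}); the point of $b<a_0$ is that it lets us fix such a $c_V$ with $c_V>b$. \textbf{Case $\mu=\mu'$.} Then $G^{\mu}_{b,\delta,w}\bigl(x-\mu(t-s),s\bigr)=e^{-bs}\sup_{|z|\le\delta s}w(x-\mu t+z)$, so the argument of $w$ does not depend on $s$: pulling the supremum out, writing $e^{-bs}=e^{-bt}e^{b(t-s)}$, and using $\int_0^t e^{-(c_V-b)(t-s)}\,ds\le (c_V-b)^{-1}$ together with the edge estimate (the range $t-s\le1$ contributing a bounded constant), one gets $\int_0^t|\partial_x^k\partial_t^n V(\mu(t-s),t-s)|\,G^{\mu}_{b,\delta,w}(x-\mu(t-s),s)\,ds\le C\,e^{-bt}\sup_{|z|\le\delta t}w(x-\mu t+z)=C\,G^{\mu'}_{b,\delta,w}(x,t)$, and no $F$-term is needed.

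\textbf{Case $\mu\ne\mu'$.} By the reflection $x\mapsto-x$, $(\mu_1,\mu_2)\mapsto(-\mu_2,-\mu_1)$ we may assume $\mu'=\mu_1$, $\mu=\mu_2$. The argument of $w$ is now $x-\mu_1 t+(\mu_1-\mu_2)s+z$, which for $s\in[0,t]$, $|z|\le\delta s$ stays in $[x-(\mu_1+\delta)t,\ x-(\mu_2-\delta)t]$ — exactly the range read by $F^{\mu_1+\delta,\mu_2-\delta}$, and this is where $\delta<\tfrac12\min(\mu_1,|\mu_2|)$ is used. Split $\int_0^t=\int_{t-1}^t+\int_0^{t-1}$ (for $t\le1$ all quantities are bounded). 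On $[t-1,t]$ the $V$-factor is $\le C$, $e^{-bs}\le e^{b}e^{-bt}$, and the argument of $w$ lies within $(\mu_1-\mu_2)+\delta t$ of $x-\mu_2 t$, so (absorbing the bounded shift for large $t$) this piece is $\le C\,e^{-bt}\sup_{|z|\le C\delta t}w(x-\mu_2 t+z)$, a $G$-term along the $\mu_2$-characteristic — this is the ``$G^{\mu'}$'' of the statement, up to the relabelling $\mu\leftrightarrow\mu'$ and an enlargement of $\delta$ by a fixed factor, both harmless since in applications it is the full sum $G^{\mu_1}_{b,\delta,\cdot}+G^{\mu_2}_{b,\delta,\cdot}$ that occurs. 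On $[0,t-1]$ one has $t-s\ge1$, hence the $V$-factor is $\le C(1+t-s)^{-\gamma}e^{-c_V(t-s)}$, $\gamma:=\tfrac12+\tfrac k2+n$; writing $e^{-bs}=e^{-bt}e^{b(t-s)}$ and changing variables $\sigma=x-\mu_1 t+(\mu_1-\mu_2)s$ (so $t-s=\bigl((x-\mu_2 t)-\sigma\bigr)/(\mu_1-\mu_2)$) bounds this piece by $C\,e^{-bt}\int_{\sigma}e^{-\frac{c_V-b}{\mu_1-\mu_2}((x-\mu_2 t)-\sigma)}\sup_{|z|\le\delta t}w(\sigma+z)\,d\sigma$, the $\sigma$-integral being over the window above. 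For $a$ small enough one checks $e^{-bt}e^{-\frac{c_V-b}{\mu_1-\mu_2}((x-\mu_2 t)-\sigma)}\le C(1+t)^{-\gamma}e^{-a(x-\sigma)^2/(1+t)}$ for all $\sigma$ in that window (the left side never exceeds $e^{-bt}$ and decays faster towards the $\mu_1$-end, while the right side is $\ge C(1+t)^{-\gamma}e^{-a(\mu_1+\delta)^2 t}$); absorbing the supremum into the $w$-integral exactly as in the proof of Lemma \ref{L36v4}, this piece is $\le C\,F^{\mu_1+\delta,\mu_2-\delta}_{a,\frac12+\frac k2+n,w}(x,t)$. Adding the two pieces finishes the case.

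The one genuinely delicate point is this last step: transferring a bound written in terms of the supremum quantity $\sup_{|z|\le\delta t}w(\cdot+z)$ to one written in terms of the integral quantity $F$ — the same manoeuvre used in Lemma \ref{L36v4}. What makes it cost nothing is precisely the exponential gain $c_V>b$ along the edge: the geometric spreading produced by moving the supremum back onto $w$ is reabsorbed by an exponential in $t$, hence disappears at the level of the final estimate. Everything else — keeping the argument of $w$ inside the enlarged light cone (using $\delta<\tfrac12\min(\mu_1,|\mu_2|)$) and fixing $a$ small enough for the pointwise exponential comparison above — is routine bookkeeping.
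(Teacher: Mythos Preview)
For $\mu=\mu'$ your argument is exactly the paper's. For $\mu\ne\mu'$ the paper does \emph{not} split the $s$-integral: it performs the single change of variable $y=\mu s+\mu'(t-s)-z$ and, after enlarging the $y$-range to $[(\mu_2-\delta)t,(\mu_1+\delta)t]$, uses the pointwise bound $e^{-a_0(t-s)-bs}\le e^{-at}e^{-ay^2/(1+t)}$ (valid for $a$ small because $b<a_0$) to reach $F$ directly, with no $G$-term arising in this case.

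Your route is different and has two gaps. First, on $[t-1,t]$ you produce a $G$-term along the $\mu$-characteristic, not $\mu'$ as the lemma asserts; these are not interchangeable (place a narrow bump $w$ at $x-\mu t$: then $G^{\mu'}_{b,\delta,w}(x,t)=0$ since $|\mu-\mu'|>2\delta$, while your $[t-1,t]$ piece is of size $\|w\|_\infty e^{-bt}$, which $F\sim\|w\|_{L^1}$ cannot absorb). You are right that only the sum $G^{\mu_1}+G^{\mu_2}$ matters in the applications, but that does not prove the lemma as written. Second, on $[0,t-1]$ you end up integrating $\sup_{|z|\le\delta t}w(\sigma+z)$ against a Gaussian in $\sigma$ and invoke the ``absorb the sup'' step of Lemma~\ref{L36v4}. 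But in that lemma the supremum is shifted onto the \emph{kernel} $V$ via the inner $y$-integral, where uniform Gaussian decay makes it harmless; here there is no inner integral, and the maximal function $\sup_{|z|\le\delta t}w(\cdot+z)$ integrated against a Gaussian is not controlled by the corresponding integral of $w$ (same bump example: the ratio is of order $\|w\|_\infty\,\delta t$). The paper's change of variable, whatever its own subtleties with the supremum, at least keeps $z$ as part of the new coordinate $y$ and so lands on an integral of $w$ itself rather than of its maximal function.
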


\begin{proof}
  We recall from Lemma \ref{Vest} that there exists $a_0 > 0$ depending on
  $\mu_1, \mu_2$ such that
  \[ | \partial_x^k \partial_t^n V (\mu' (t - s), t - s) | \leqslant K (k, n,
     \mu_1, \mu_2) e^{- a_0 (t - s)} . \]
  We deduce that
  \begin{eqnarray*}
    &  & \int_0^t | \partial_x^k \partial_t^n V (\mu' (t - s), t - s) G_{b,
    \delta, w}^{\mu} (x - \mu' (t - s), s) | d s\\
    & \leqslant & K (k, n, \mu_1, \mu_2) \int_0^t e^{- a_0 (t - s)} e^{- b s}
    \sup_{z \in [- \delta s, \delta s]} w (x - \mu s - \mu' (t - s) + z) d s.
  \end{eqnarray*}
  If $\mu = \mu'$, then $- \mu s - \mu' (t - s) = - \mu' t$ and
  \begin{eqnarray*}
    &  & \int_0^t | \partial_x^k \partial_t^n V (\mu' (t - s), t - s) G_{b,
    \delta, w}^{\mu} (x - \mu' (t - s), s) | d s\\
    & \leqslant & K (k, n, \mu_1, \mu_2) \sup_{z \in [- \delta t, \delta t]}
    w (x - \mu' t + z) \int_0^t e^{- a_0 (t - s)} e^{- b s} d s\\
    & \leqslant & K (k, n, \mu_1, \mu_2, b) G_{b, \delta, w}^{\mu'} (x, t) .
  \end{eqnarray*}
  Now, if $\mu \neq \mu'$, then we do the change of variable $y = \mu s + \mu'
  (t - s) - z$, leading to
  \begin{eqnarray*}
    &  & \int_0^t | \partial_x^k \partial_t^n V (\mu' (t - s), t - s) G_{b,
    \delta, w}^{\mu} (x - \mu' (t - s), s) | d s\\
    & \leqslant & \frac{K (k, n, \mu_1, \mu_2)}{\mu - \mu'} \sup_{z \in [-
    \delta t, \delta t]} \int_{\mu' t + z}^{\mu t + z} e^{- a_0 (t - s)} e^{-
    b s} w (x - y) d y\\
    & \leqslant & K (k, n, \mu_1, \mu_2) \int_{(\mu_2 - \delta) t}^{(\mu_1 +
    \delta) t} e^{- a_0 (t - s)} e^{- b s} w (x - y) d y
  \end{eqnarray*}
  and there exists $a > 0$ small enough depending on $a_0$ and $b$ such that
  $e^{- a_0 (t - s)} e^{- b s} \leqslant e^{- a t} e^{- a \frac{y^2}{1 + t}}$
  for all $y \in [\mu_2 t, \mu_1 t]$, therefore
  \begin{eqnarray*}
    &  & \int_0^t | \partial_x^k \partial_t^n V (\mu' (t - s), t - s) G_{b,
    \delta, w}^{\mu} (x - \mu' (t - s), s) d s |\\
    & \leqslant & K (k, n, \mu_1, \mu_2) e^{- a t} \int_{(\mu_2 - \delta)
    t}^{(\mu_1 + \delta) t} e^{- a \frac{y^2}{1 + t}} w (x - y) d y\\
    & \leqslant & K (k, n, \mu_1, \mu_2) F_{a, \frac{1}{2} + \frac{k}{2} + n,
    w}^{\mu_1 + \delta, \mu_2 - \delta} (x, t),
  \end{eqnarray*}
  concluding the proof of this lemma.
\end{proof}

\subsection{Estimates on $F_{a, \gamma, w}^{\mu_1, \mu_2}$ and $G_{b, \delta,
w}^{\mu}$}\label{ss33vf}

\begin{lemma}
  Consider for $\gamma, a > 0, \mu_1 > 0, \mu_2 < 0, w \in L^1 (\mathbb{R},
  \mathbb{R}^+)$ the function
  \[ F_{a, \gamma, w}^{\mu_1, \mu_2} (x, t) = \frac{1}{(1 + t)^{\gamma}}
     \int_{\mu_2 t}^{\mu_1 t} e^{- a \frac{y^2}{1 + t}} w (x - y) d y. \]
  Then, there exists $K > 0$ depending on $\mu_1, \mu_2, a$ such that for all
  $x \in \mathbb{R}, t \geqslant 0$,
  \[ \| F_{a, \gamma, w}^{\mu_1, \mu_2} \|_{L^1_x} (t) \leqslant \frac{K}{(1 +
     t)^{\gamma - 1 / 2}} \| w \|_{L^1 (\mathbb{R})} \]
  and
  \[ | F_{a, \gamma, w}^{\mu_1, \mu_2} (x, t) | \leqslant \frac{K}{(1 +
     t)^{\gamma}} \| w \|_{L^1 (\mathbb{R})} . \]
\end{lemma}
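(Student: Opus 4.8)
The plan is to prove both inequalities by working directly with the defining integral, using nothing beyond Fubini's theorem (legitimate since $w \geqslant 0$ makes every integrand nonnegative) and the elementary Gaussian bound $\int_{\mathbb{R}} e^{-a y^2/(1+t)}\,dy = \sqrt{\pi (1+t)/a}$. No case distinction in $t$ is needed, since these estimates are already uniform down to $t = 0$.

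For the $L^1$ bound I would write
\[
\| F_{a, \gamma, w}^{\mu_1, \mu_2} \|_{L^1_x}(t) = \frac{1}{(1+t)^{\gamma}} \int_{\mathbb{R}} \int_{\mu_2 t}^{\mu_1 t} e^{- a \frac{y^2}{1+t}} w(x - y)\, dy\, dx,
\]
and swap the order of integration. The inner integral $\int_{\mathbb{R}} w(x-y)\, dx$ equals $\| w \|_{L^1(\mathbb{R})}$ for every fixed $y$, so the whole expression becomes $\frac{\| w \|_{L^1}}{(1+t)^{\gamma}} \int_{\mu_2 t}^{\mu_1 t} e^{-a y^2/(1+t)}\, dy$. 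Bounding the remaining integral over $[\mu_2 t, \mu_1 t]$ by the integral over all of $\mathbb{R}$ produces the factor $\sqrt{\pi (1+t)/a}$, which gives the claimed decay $(1+t)^{-(\gamma - 1/2)}$ with $K = \sqrt{\pi / a}$.

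For the pointwise bound I would simply estimate $e^{-a y^2/(1+t)} \leqslant 1$ inside the integral defining $F$, which leaves
\[
| F_{a, \gamma, w}^{\mu_1, \mu_2}(x, t) | \leqslant \frac{1}{(1+t)^{\gamma}} \int_{\mu_2 t}^{\mu_1 t} w(x - y)\, dy \leqslant \frac{1}{(1+t)^{\gamma}} \int_{\mathbb{R}} w(x-y)\, dy = \frac{\| w \|_{L^1}}{(1+t)^{\gamma}},
\]
so this holds with $K = 1$. Taking the larger of the two constants yields one $K$ serving both inequalities.

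There is essentially no obstacle here: the only two points deserving a word of care are the applicability of Fubini—guaranteed by nonnegativity of the integrand, and if one wants to be fully careful about $w$ merely being in $L^1$ one can truncate $w$ to compact support, apply the identity, and pass to the limit by monotone convergence—and the observation that the resulting constant $K$ depends only on $a$ (in particular not on $\gamma$, nor on the endpoints $\mu_1, \mu_2$), which is consistent with, and slightly stronger than, the dependence claimed in the statement.
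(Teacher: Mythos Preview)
Your proof is correct and follows essentially the same approach as the paper: Fubini plus the Gaussian integral for the $L^1$ bound, and the trivial estimate $e^{-a y^2/(1+t)} \leqslant 1$ for the pointwise bound. Your remark that $K$ can be taken to depend only on $a$ is a slight sharpening of the stated dependence, and is accurate.
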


\begin{proof}
  We simply compute
  \begin{eqnarray*}
    \int_{\mathbb{R}} F_{a, \gamma, w}^{\mu_1, \mu_2} (x, t) d x & \leqslant &
    \frac{\| w \|_{L^1 (\mathbb{R})}}{(1 + t)^{\gamma}} \int_{\mu_2 t}^{\mu_1
    t} e^{- a \frac{y^2}{1 + t}} d y\\
    & \leqslant & \frac{K \| w \|_{L^1 (\mathbb{R})}}{(1 + t)^{\gamma - 1 /
    2}}
  \end{eqnarray*}
  and
  \begin{eqnarray*}
    F_{a, \gamma, w}^{\mu_1, \mu_2} (x, t) & \leqslant & \frac{\| w \|_{L^1
    (\mathbb{R})}}{(1 + t)^{\gamma}} \left\| e^{- a \frac{y^2}{1 + t}}
    \right\|_{L^{\infty} (\mathbb{R})}\\
    & \leqslant & \frac{\| w \|_{L^1 (\mathbb{R})}}{(1 + t)^{\gamma}},
  \end{eqnarray*}
  concluding the proof.
\end{proof}

For the estimates on
\[ G_{b, \delta, w}^{\mu} (x, t) = e^{- b t} \sup_{z \in [- \delta t, \delta
   t]} w (x - \mu t + z), \]
we check easily that
\begin{equation}
  | G_{b, \delta, w}^{\mu} (x, t) | \leqslant e^{- b t} \| w \|_{L^{\infty}
  (\mathbb{R})}
\end{equation}
and
\begin{equation}
  \| G_{b, \delta, w}^{\mu} (., t) \|_{L^1 (\mathbb{R})} \leqslant e^{- b t}
  \| w \|_{L^1 (\mathbb{R})} .
\end{equation}
\section{Nonlinear change of variable to straighten the light
cone}\label{ss33v4}

For the equation
\[ \partial_t^2 f + (\lambda_1 + \lambda_2) \partial_{x t}^2 f + \lambda_1
   \lambda_2 \partial_x^2 f + \delta \partial_t f = 0 \]
with $\lambda_1 > 0, \lambda_2 < 0, \delta > 0$, we have an explicit solution
thanks to Proposition \ref{dwesol} in the case that these three coefficient
are constants. However, we see that in the nonlinear stability problem (Lemma
\ref{lem32}), these coefficients depends on the unknown. We can try to put
them as a source term. That is, if $\lambda_1 \simeq \lambda_1^0, \lambda_2
\simeq \lambda_2^0, \delta \simeq \delta^0$ where $\lambda_1^0, \lambda_2^0,
\delta^0$ are constants and we write it
\[ \partial_t^2 f + (\lambda_1^0 + \lambda_2^0) \partial_{x t}^2 f +
   \lambda_1^0 \lambda^0_2 \partial_x^2 f + \delta \partial_t f = S \]
with
\[ S = - (\lambda_1 + \lambda_2 - \lambda_1^0 - \lambda_2^0) \partial_{x t}^2
   f - (\lambda_1 \lambda_2 - \lambda_1^0 \lambda^0_2) \partial_x^2 f -
   (\delta - \delta^0) \partial_t f. \]
If we consider $f_L$ the solution to the linear problem
\[ \left\{\begin{array}{l}
     \partial_t^2 f_L + (\lambda_1^0 + \lambda_2^0) \partial_{x t}^2 f_L +
     \lambda_1^0 \lambda^0_2 \partial_x^2 f_L + \delta^0 \partial_t f_L = 0\\
     f_{L | t = 0 \nobracket} = f_0, \partial_t f_{L | t = 0 \nobracket} =
     f_1,
   \end{array}\right. \]
we have from Proposition \ref{dwesol} that
\begin{eqnarray*}
  f (x, t) & = & f_L (x, t)\\
  & + & \int_0^t \int_{\lambda_2^0 (t - s)}^{\lambda^0_1 (t - s)} V (y, t -
  s) S (x - y, s) d y d s,
\end{eqnarray*}
where $V$ is defined with $\lambda^0_1, \lambda^0_2, \delta^0$. If we want to
do an estimate on $f$ with a bootstrap, we see that with this formulation,
provided some estimates on the second derivatives of $f$, we can estimate $f$
and its first derivatives. Indeed, if we differentiate it with respect to $t$,
the derivative never fall on $S$, and if we differentiate with respect to $x$,
when the derivative fall on $S$ we can do an integration by parts.

However, it is not (a priori) possible to estimate the second derivatives of
$f$ without informations on its third derivatives. In this sense, we have a
loss of derivative in this problem. To solve this issue, we will do a
nonlinear change of variable that will change the caracteristic speeds
$\lambda_1, \lambda_2$ to constants $\lambda_1^0, \lambda_2^0$. This will be
use only to estimate second derivatives.

\subsection{Reformulation of the equation}

We consider the equation
\[ \partial_t^2 f + (\lambda_1 + \lambda_2) \partial_{x t}^2 f + \lambda_1
   \lambda_2 \partial_x^2 f + \delta \partial_t f + w \partial_x f = 0, \]
where $\lambda_1, \lambda_2, \delta, w$ depend on $(x, t)$ and $f_{| t = 0
\nobracket} = \partial_t f_{| t = 0 \nobracket} = 0$. In fact, it can depend
on them through $f$, we will precise this later. We suppose that $\lambda_1,
\lambda_2$ are close respectfully to $\lambda_1^0 > 0, \lambda_2^0 < 0$. For
now, we just want to define the change of variable. Specific estimate on the
functions that will appear will be done later.

We write
\[ f (x, t) = g (h_1 (x, t), h_2 (x, t)) . \]
In the case $\lambda_1 = \lambda_1^0, \lambda_2 = \lambda_2^0$ where
$\lambda_1^0, \lambda_2^0$ are constants, we would take $h_1 (x, t) = x$ and
$h_2 (x, t) = t$. Here, we only suppose that
\[ h_1 (x, 0) = x, h_2 (x, 0) = 0. \label{hinit} \]
We denote these new variables by $(y, \nu) \assign (h_1 (x, t), h_2 (x, t))$.
Here and afterward, $g$ is taken in $(y, \nu) = (h_1 (x, t), h_2 (x, t))$ and
$\lambda_1, \lambda_2, \delta, w, h_1, h_2$ are taken in $(x, t)$. We compute
\[ \partial_x f = \partial_x h_1 \partial_y g + \partial_x h_2 \partial_{\nu}
   g, \]
\[ \partial_x^2 f = (\partial_x h_1)^2 \partial_y^2 g + 2 \partial_x h_1
   \partial_x h_2 \partial_{y \nu}^2 g + (\partial_x h_2)^2 \partial_{\nu}^2 g
   + \partial_x^2 h_1 \partial_y g + \partial_x^2 h_2 \partial_{\nu} g, \]
\begin{eqnarray*}
  \partial_{x t}^2 f & = & \partial_x h_1 \partial_t h_1 \partial_y^2 g +
  (\partial_x h_1 \partial_t h_2 + \partial_x h_2 \partial_t h_1) \partial_{y
  \nu}^2 g + \partial_x h_2 \partial_t h_2 \partial_{\nu}^2 g\\
  & + & \partial_{x t}^2 h_1 \partial_y g + \partial_{x t}^2 h_2
  \partial_{\nu} g,
\end{eqnarray*}
\[ \partial_t f = \partial_t h_1 \partial_y g + \partial_t h_2 \partial_{\nu}
   g \]
and
\[ \partial_t^2 f = (\partial_t h_1)^2 \partial_y^2 g + 2 \partial_t h_1
   \partial_t h_2 \partial_{y \nu}^2 g + (\partial_t h_2)^2 \partial_{\nu}^2 g
   + \partial_t^2 h_1 \partial_y g + \partial_t^2 h_2 \partial_{\nu} g. \]
We deduce that $g$ satisfies the equation
\begin{eqnarray}
  &  & \partial_{\nu}^2 g ((\partial_t h_2)^2 + (\lambda_1 + \lambda_2)
  \partial_x h_2 \partial_t h_2 + \lambda_1 \lambda_2 (\partial_x h_2)^2)
  \nonumber\\
  & + & \partial_{y \nu}^2 g (2 \partial_t h_1 \partial_t h_2 + (\lambda_1 +
  \lambda_2) (\partial_x h_1 \partial_t h_2 + \partial_x h_2 \partial_t h_1) +
  2 \lambda_1 \lambda_2 \partial_x h_1 \partial_x h_2) \nonumber\\
  & + & \partial_y^2 g ((\partial_t h_1)^2 + (\lambda_1 + \lambda_2)
  \partial_x h_1 \partial_t h_1 + \lambda_1 \lambda_2 (\partial_x h_1)^2)
  \nonumber\\
  & + & \partial_{\nu} g (\partial_t^2 h_2 + (\lambda_1 + \lambda_2)
  \partial_{x t}^2 h_2 + \lambda_1 \lambda_2 \partial_x^2 h_2 + \delta
  \partial_t h_2 + w \partial_x h_2) \nonumber\\
  & + & \partial_y g (\partial_t^2 h_1 + (\lambda_1 + \lambda_2) \partial_{x
  t}^2 h_1 + \lambda_1 \lambda_2 \partial_x^2 h_1 + \delta \partial_t h_1 + w
  \partial_x h_1) \nonumber\\
  & = & 0.  \label{fatcomp}
\end{eqnarray}
Dividing by $(\partial_x h_2)^2 + (\lambda_1 + \lambda_2) \partial_x h_2
\partial_t h_2 + \lambda_1 \lambda_2 (\partial_x h_2)^2$, we therefore want to
choose $h_1, h_2$ such that
\[ \frac{2 \partial_t h_1 \partial_t h_2 + (\lambda_1 + \lambda_2) (\partial_x
   h_1 \partial_t h_2 + \partial_x h_2 \partial_t h_1) + 2 \lambda_1 \lambda_2
   \partial_x h_1 \partial_x h_2}{(\partial_t h_2)^2 + (\lambda_1 + \lambda_2)
   \partial_x h_2 \partial_t h_2 + \lambda_1 \lambda_2 (\partial_x h_2)^2} =
   \lambda_1^0 + \lambda_2^0 \]
and
\[ \frac{(\partial_t h_1)^2 + (\lambda_1 + \lambda_2) \partial_x h_1
   \partial_t h_1 + \lambda_1 \lambda_2 (\partial_x h_1)^2}{(\partial_t h_2)^2
   + (\lambda_1 + \lambda_2) \partial_x h_2 \partial_t h_2 + \lambda_1
   \lambda_2 (\partial_x h_2)^2} = \lambda_1^0 \lambda_2^0, \]
where $\lambda_1^0, \lambda_2^0$ are constants. We remark that
\begin{eqnarray*}
  &  & (\partial_t h_2)^2 + (\lambda_1 + \lambda_2) \partial_x h_2 \partial_t
  h_2 + \lambda_1 \lambda_2 (\partial_x h_2)^2\\
  & = & (\partial_t h_2 + \lambda_1 \partial_x h_2) (\partial_t h_2 +
  \lambda_2 \partial_x h_2),
\end{eqnarray*}
\begin{eqnarray*}
  &  & (\partial_t h_1)^2 + (\lambda_1 + \lambda_2) \partial_x h_1 \partial_t
  h_1 + \lambda_1 \lambda_2 (\partial_x h_1)^2\\
  & = & (\partial_t h_1 + \lambda_1 \partial_x h_1) (\partial_t h_1 +
  \lambda_2 \partial_x h_1)
\end{eqnarray*}
and
\begin{eqnarray*}
  &  & 2 \partial_t h_1 \partial_t h_2 + (\lambda_1 + \lambda_2) (\partial_x
  h_1 \partial_t h_2 + \partial_x h_2 \partial_t h_1) + 2 \lambda_1 \lambda_2
  \partial_x h_1 \partial_x h_2\\
  & = & (\partial_t h_1 + \lambda_1 \partial_x h_1) (\partial_t h_2 +
  \lambda_2 \partial_x h_2) + (\partial_t h_1 + \lambda_2 \partial_x h_1)
  (\partial_t h_2 + \lambda_1 \partial_x h_2) .
\end{eqnarray*}
Therefore, if we define
\[ A_1 \assign \partial_t h_1 + \lambda_1 \partial_x h_1, A_2 = \partial_t h_1
   + \lambda_2 \partial_x h_1 \]
as well as
\[ B_1 \assign \partial_t h_2 + \lambda_1 \partial_x h_2, B_2 = \partial_t h_2
   + \lambda_2 \partial_x h_2, \]
we have
\begin{equation}
  A_1 A_2 = \lambda_1^0 \lambda_2^0 B_1 B_2, A_1 B_2 + A_2 B_1 = (\lambda_1^0
  + \lambda_2^0) B_1 B_2, \label{420}
\end{equation}
therefore if $B_1, B_2 \neq 0$,
\[ \frac{A_1}{B_1} \frac{A_2}{B_2} = \lambda_1^0 \lambda_2^0, \frac{A_1}{B_1}
   + \frac{A_2}{B_2} = \lambda_1^0 + \lambda_2^0 . \]
Equality (\ref{420}) is satisfied if
\[ A_1 = \lambda_1^0 B_1, A_2 = \lambda_2^0 B_2, \]
that is
\[ (\partial_t + \lambda_1 \partial_x) (h_1 - \lambda_1^0 h_2) = (\partial_t +
   \lambda_2 \partial_x) (h_1 - \lambda_2^0 h_2) = 0. \]
We define $\tilde{h}_1, \tilde{h}_2$ by
\[ h_1 (x, t) = x + \tilde{h}_1 (x, t), h_2 (x, t) = t + \tilde{h}_2 (x, t) \]
and
\[ H_1 \assign \tilde{h}_1 - \lambda_1^0 \tilde{h}_2, H_2 \assign \tilde{h}_1
   - \lambda_2^0 \tilde{h}_2, \]
with $H_{1 | t = 0 \nobracket} = H_{2 | t = 0 \nobracket} = 0$ (by
\ref{hinit}). Then,
\begin{equation}
  (\partial_t + \lambda_1 \partial_x) H_1 = \lambda_1^0 - \lambda_1,
  (\partial_t + \lambda_2 \partial_x) H_2 = \lambda_2^0 - \lambda_2 .
  \label{29eq}
\end{equation}
We find back $\tilde{h}_1, \tilde{h}_2$ by
\begin{equation}
  \tilde{h}_1 = \frac{\lambda_2^0 H_1 - \lambda_1^0 H_2}{\lambda_2^0 -
  \lambda_1^0}, \tilde{h}_2 = \frac{H_1 - H_2}{\lambda_2^0 - \lambda_1^0} .
  \label{htilde27}
\end{equation}
We assume for now that we are able to show that for some small
$\tilde{\varepsilon} > 0$, we have
\[ | \partial_t H_1 (x, t) | + | \partial_x H_1 (x, t) | + | \partial_t H_2
   (x, t) | + | \partial_x H_2 (x, t) | \leqslant \tilde{\varepsilon} . \]
This will be done using equation (\ref{29eq}) and some estimate on
$\lambda_1^0 - \lambda_1, \lambda_2^0 - \lambda_2$ later on. We deduce in that
case that
\[ (x, t) \rightarrow (x + \tilde{h}_1 (x, t), t + \tilde{h}_2 (x, t)) = (y
   (x, t), \nu (x, t)) \]
is an inversible function since
\[ | \partial_t \tilde{h}_1 | + | \partial_x \tilde{h}_1 | + | \partial_t
   \tilde{h}_2 | + | \partial_x \tilde{h}_2 | \leqslant K \tilde{\varepsilon}
   \ll 1. \]
It is in fact close to the identity, in particular this function and its
inverse have a norm close to one. This allows us to define $g$ through $f$.

We check indeed that $t + \tilde{h}_2 (x, t) \geqslant 0$ for all $t \geqslant
0$, using the fact that $\tilde{h}_2 (x, 0) = 0$ and $| \partial_t \tilde{h}_2
| \leqslant K \varepsilon$ which implies that $| \tilde{h}_2 (x, t) |
\leqslant K \tilde{\varepsilon} t \leqslant \frac{t}{2}$ given
$\tilde{\varepsilon}$ small enough. We have then
\begin{equation}
  \partial_t h_2 (x, t) = \partial_t (t + \tilde{h}_2 (x, t)) = 1 + \partial_t
  \tilde{h}_2 (x, t) \geqslant 1 - \tilde{\varepsilon} \label{time} .
\end{equation}

We go back on the equation (\ref{fatcomp}) satisfied by $g$. We have
\begin{eqnarray*}
  &  & \partial_{\nu}^2 g + (\lambda_1^0 + \lambda_2^0) \partial_{y \nu}^2 g
  + \lambda_1^0 \lambda_2^0 \partial_y^2 g\\
  & + & \frac{\partial_t^2 h_2 + (\lambda_1 + \lambda_2) \partial_{x t}^2 h_2
  + \lambda_1 \lambda_2 \partial_x^2 h_2 + \delta \partial_t h_2 + w
  \partial_x h_2}{(\partial_t h_2)^2 + (\lambda_1 + \lambda_2) \partial_x h_2
  \partial_t h_2 + \lambda_1 \lambda_2 (\partial_x h_2)^2} \partial_{\nu} g\\
  & + & \frac{\partial_t^2 h_1 + (\lambda_1 + \lambda_2) \partial_{x t}^2 h_1
  + \lambda_1 \lambda_2 \partial_x^2 h_1 + \delta \partial_t h_1 + w
  \partial_x h_1}{(\partial_t h_2)^2 + (\lambda_1 + \lambda_2) \partial_x h_2
  \partial_t h_2 + \lambda_1 \lambda_2 (\partial_x h_2)^2} \partial_y g\\
  & = & 0.
\end{eqnarray*}
Remark that $g$ is taken in $(h_1 (x, t), h_2 (x, t)) = (y, \nu)$ and $h_1,
h_2, \lambda_1, \lambda_2, \delta, w$ in $(x, t)$. We define $R_1 (x, t), R_2
(x, t)$ by
\begin{equation}
  \frac{\partial_t^2 h_2 + (\lambda_1 + \lambda_2) \partial_{x t}^2 h_2 +
  \lambda_1 \lambda_2 \partial_x^2 h_2 + \delta \partial_t h_2 + w \partial_x
  h_2}{(\partial_t h_2)^2 + (\lambda_1 + \lambda_2) \partial_x h_2 \partial_t
  h_2 + \lambda_1 \lambda_2 (\partial_x h_2)^2} (x, t) = \delta^0 + R_1 (x, t)
  \label{link3}
\end{equation}
and
\[ \frac{\partial_t^2 h_1 + (\lambda_1 + \lambda_2) \partial_{x t}^2 h_1 +
   \lambda_1 \lambda_2 \partial_x^2 h_1 + \delta \partial_t h_1 + w \partial_x
   h_1}{(\partial_t h_2)^2 + (\lambda_1 + \lambda_2) \partial_x h_2 \partial_t
   h_2 + \lambda_1 \lambda_2 (\partial_x h_2)^2} = R_2 (x, t) \]
We recall that $(x, t) \rightarrow (h_1 (x, t), h_2 (x, t))$ is smooth,
invertible and close to the identity for $\tilde{\varepsilon}$ small enough.
We define $\pi$ its inverse (that is, $\pi (y, \nu) = (\pi_1 (y, \nu), \pi_2
(y, \nu))$ is such that $y = h_1 (\pi_1 (y, \nu), \pi_2 (y, \nu)), \nu = h_2
(\pi_1 (y \comma \nu), \pi_2 (y, \nu))$). Then,
\[ g (y, \nu) = (f o \pi) (y, \nu) . \]
In particular,
\[ \partial_y g = \partial_y \pi_1 (\partial_x f o \pi) + \partial_y \pi_2
   (\partial_t f o \pi) \]
and
\[ \partial_{\nu} g = \partial_{\nu} \pi_1 (\partial_x f o \pi) +
   \partial_{\nu} \pi_2 (\partial_t f o \pi) . \]
Now, letting the variables be $(y, \nu) = (h_1 (x, t), h_2 (x, t))$, the
equation satisfied by $g$ is
\begin{eqnarray}
  &  & (\partial_{\nu}^2 g + (\lambda_1^0 + \lambda_2^0) \partial_{y \nu}^2 g
  + \lambda_1^0 \lambda_2^0 \partial_y^2 g + \delta^0 \partial_{\nu} g) (y,
  \nu) \nonumber\\
  & = & - (R_1 o \pi) \partial_{\nu} g - R_2 o \pi \partial_y
  g.  \label{eqg}
\end{eqnarray}
At $\nu = 0$, we have $t + h_2 (x, t) = 0$ and from (\ref{time}) this implies
that $t = 0$, hence $g (y, 0) = f (y, 0), \partial_{\nu} g (y, 0) =
\partial_{\nu} \pi_1 (y, 0) \partial_x f (y, 0) + \partial_{\nu} \pi_2 (y, 0)
\partial_t f (y, 0) .$ From Proposition \ref{dwesol}, this implies that for
any $y \in \mathbb{R}, \nu \geqslant 0$,
\begin{eqnarray*}
  g (y, \nu) & = g_L (y, \nu) + & \int_0^{\nu} \int_{\lambda_2^0 (\nu -
  s)}^{\lambda^0_1 (\nu - s)} V (z, \nu - s) S (y - z, s) d z d s,
\end{eqnarray*}
where $S (y - z, s) = (- R_1 o \pi \partial_{\nu} g - R_2 o \pi
\partial_y g) (y - z, s)$ and $g_L$ is solution to $\partial_{\nu}^2 g_L +
(\lambda_1^0 + \lambda_2^0) \partial_{y \nu}^2 g_L + \lambda_1^0 \lambda_2^0
\partial_y^2 g_L + \frac{1}{\tau} \partial_{\nu} g_L = 0$ with the initial
condition $g_L (y, 0) = f_0$, $\partial_{\nu} g_L (y, 0) = \partial_{\nu}
\pi_1 (y, 0) \partial_x f (y, 0) + \partial_{\nu} \pi_2 (y, 0) \partial_t f
(y, 0)$.

\

Our goal is to estimate $R_1, R_2$ with estimates on $\lambda_1, \lambda_2,
\delta, w$ that require the less possible amount of derivatives on these
functions. This is because they will depend on $f$, and to do a bootstrap we
will have information only on second derivatives of $f$.

First, we remark that for any function $h$, we have
\[ \partial_t^2 h + (\lambda_1 + \lambda_2) \partial_{x t}^2 h + \lambda_1
   \lambda_2 \partial_x^2 h = (\partial_t + \lambda_1 \partial_x) (\partial_t
   + \lambda_2 \partial_x) h - (\partial_t \lambda_2 + \lambda_1 \partial_x
   \lambda_2) \partial_x h \]
and
\[ \partial_t^2 h + (\lambda_1 + \lambda_2) \partial_{x t}^2 h + \lambda_1
   \lambda_2 \partial_x^2 h = (\partial_t + \lambda_2 \partial_x) (\partial_t
   + \lambda_1 \partial_x) h - (\partial_t \lambda_1 + \lambda_2 \partial_x
   \lambda_1) \partial_x h. \]
Therefore, since $h_1 = x + \frac{\lambda_2^0 H_1 - \lambda_1^0
H_2}{\lambda_2^0 - \lambda_1^0}$, we compute
\begin{eqnarray*}
  &  & \partial_t^2 h_1 + (\lambda_1 + \lambda_2) \partial_{x t}^2 h_1 +
  \lambda_1 \lambda_2 \partial_x^2 h_1\\
  & = & \frac{\lambda_2^0}{\lambda_2^0 - \lambda_1^0} (\partial_t^2 +
  (\lambda_1 + \lambda_2) \partial_{x t}^2 + \lambda_1 \lambda_2 \partial_x^2)
  H_1\\
  & - & \frac{\lambda_1^0}{\lambda_2^0 - \lambda_1^0} (\partial_t^2 +
  (\lambda_1 + \lambda_2) \partial_{x t}^2 + \lambda_1 \lambda_2 \partial_x^2)
  H_2\\
  & = & \frac{\lambda_2^0}{\lambda_2^0 - \lambda_1^0} ((\partial_t +
  \lambda_2 \partial_x) (\partial_t + \lambda_1 \partial_x) H_1 - (\partial_t
  \lambda_1 + \lambda_2 \partial_x \lambda_1) \partial_x H_1)\\
  & - & \frac{\lambda_1^0}{\lambda_2^0 - \lambda_1^0} ((\partial_t +
  \lambda_1 \partial_x) (\partial_t + \lambda_2 \partial_x) H_2 - (\partial_t
  \lambda_2 + \lambda_1 \partial_x \lambda_2) \partial_x H_2)\\
  & = & \frac{\lambda_2^0}{\lambda_2^0 - \lambda_1^0} (- (\partial_t
  \lambda_1 + \lambda_2 \partial_x \lambda_1) - (\partial_t \lambda_1 +
  \lambda_2 \partial_x \lambda_1) \partial_x H_1)\\
  & - & \frac{\lambda_1^0}{\lambda_2^0 - \lambda_1^0} (- (\partial_t
  \lambda_2 + \lambda_1 \partial_x \lambda_2) - (\partial_t \lambda_2 +
  \lambda_1 \partial_x \lambda_2) \partial_x H_2) .
\end{eqnarray*}
A similar estimate can be made for $\partial_t^2 h_2 + (\lambda_1 + \lambda_2)
\partial_{x t}^2 h_2 + \lambda_1 \lambda_2 \partial_x^2 h_2$. Remark that
$\partial_t^2 h_1 + (\lambda_1 + \lambda_2) \partial_{x t}^2 h_1 + \lambda_1
\lambda_2 \partial_x^2 h_1$ was the only component of $R_1$ that has two
derivatives in $h_1$ (see (\ref{link3})). The situation is similar for
$\partial_t^2 h_2 + (\lambda_1 + \lambda_2) \partial_{x t}^2 h_2 + \lambda_1
\lambda_2 \partial_x^2 h_2$ and $R_2$.

\subsection{Summary}\label{sumsum}

Consider the equation
\[ \partial_t^2 f + (\lambda_1 + \lambda_2) \partial_{x t}^2 f + \lambda_1
   \lambda_2 \partial_x^2 f + \delta \partial_t f + w \partial_x f = 0, \]
where $\lambda_1, \lambda_2, \delta, w$ depend on $(x, t)$. We suppose that
there exists constants $\lambda_1^0 > 0, \lambda_2^0 < 0, \delta^0 > 0$ that
are close respectfully to $\lambda_1, \lambda_2, \delta$. Then, we define
$H_1, H_2$ the solutions to the problems
\[ (\partial_t + \lambda_1 \partial_x) H_1 = \lambda_1^0 - \lambda_1,
   (\partial_t + \lambda_2 \partial_x) H_2 = \lambda_2^0 - \lambda_2 \]
with $H_{1 | t = 0 \nobracket} = H_{2 | t = 0 \nobracket} = 0$, as well as
\[ \tilde{h}_1 = \frac{\lambda_2^0 H_1 - \lambda_1^0 H_2}{\lambda_2^0 -
   \lambda_1^0}, \tilde{h}_2 = \frac{H_1 - H_2}{\lambda_2^0 - \lambda_1^0} .
\]
We suppose that we can show that
\[ | \partial_t \tilde{h}_1 | + | \partial_x \tilde{h}_1 | + | \partial_t
   \tilde{h}_2 | + | \partial_x \tilde{h}_2 | \ll 1, \]
in particular, the change of variable
\[ (y, \nu) = (x + \tilde{h}_1 (x, t), t + \tilde{h}_2 (x, t)) \]
is invertible and close to the identity. We then define the function $g$ by
\[ g (y, \nu) = f (\pi (y, \nu)) = f (x, t), \]
where $\pi (y, \nu) = (x, t)$ are obtain by solving $y = x + \tilde{h}_1 (x,
t), \nu = t + \tilde{h}_2 (x, t)$. Then, the function $g$ satisfies the
equation
\begin{eqnarray*}
  &  & (\partial_{\nu}^2 g + (\lambda_1^0 + \lambda_2^0) \partial_{y \nu}^2 g
  + \lambda_1^0 \lambda_2^0 \partial_y^2 g + \delta^0 \partial_{\nu} g) (y,
  \nu)\\
  & = & - (R_1 o \pi) \partial_{\nu} g - R_2 o \pi \partial_y
  g,
\end{eqnarray*}
with $g (y, 0) = f (y, 0), \partial_{\nu} g (y, 0) = \partial_{\nu} \pi_1 (y,
0) \partial_x f (y, 0) + \partial_{\nu} \pi_2 (y, 0) \partial_t f (y, 0)$, and
where $R_1, R_2$ are small functions depending on $H_1, H_2, \partial_x H_1,
\partial_x H_2, \partial_t H_1, \partial_t H_2, \partial_x \lambda_1,
\partial_t \lambda_1, \partial_x \lambda_2, \partial_t \lambda_2, \delta -
\delta^0, w$. In particular, they do not depend on second derivatives of $H_1,
H_2, \lambda_1$ or $\lambda_2$. It means that $R_1, R_2, \partial_t R_1,
\partial_x R_1, \partial_t R_2$ and $\partial_x R_2$ are small if we control
$H_1, H_2, \lambda_1, \lambda_2$ and their first and second derivatives, but
we do not need any informations on their third or more derivatives. Now, we
can use Proposition \ref{dwesol} on $g$, and since we have only first
derivatives in the right hand side (we are going to be able to control $n$
derivatives of $H_1, H_2$ with $n$ derivatives on our original variables),
there is no longer loss of derivatives. However, if we try to go back to $f$
by this method, all the derivatives are ``mixed'' together. We can not expect
to get a better decay than the worst derivative, that is $\partial_x f$, on
which the decay in time is $\frac{1}{(1 + t)^{1 - \nu}}$, the $\nu$-loss being
caused by convolutions (see the remark below Lemma \ref{L34v4}).

\section{Proof of the linear stability (Proposition
\ref{constlinstab})}\label{sec5vf}

\subsection{The equations of the linear problem}

\begin{lemma}
  \label{lineq31}The linear problem around the constant flow is
  \[ \partial_t \left(\begin{array}{c}
       \rho\\
       u
     \end{array}\right) + \left(\begin{array}{cc}
       U (\rho_0) & \rho_0\\
       0 & U (\rho_0) - \rho_0 h' (\rho_0)
     \end{array}\right) \partial_x \left(\begin{array}{c}
       \rho\\
       u
     \end{array}\right) + \frac{1}{\tau} \left(\begin{array}{cc}
       0 & 0\\
       U_f & 1
     \end{array}\right) \left(\begin{array}{c}
       \rho\\
       u
     \end{array}\right) = 0. \]
  In particular, with $\lambda_{\ast} = U (\rho_0) - \rho_0 U_f$, the
  functions
  \[ q (x, t) \assign \rho (x - \lambda_{\ast} t, t), v (x, t) \assign u (x -
     \lambda_{\ast} t, t) \]
  satisfies the equation
  \[ \partial_t^2 q + (\lambda^0_1 + \lambda^0_2) \partial_{x t}^2 q +
     \lambda^0_1 \lambda^0_2 \partial_x^2 q + \frac{1}{\tau} \partial_t q = 0
  \]
  and
  \[ \partial_t^2 v + (\lambda^0_1 + \lambda^0_2) \partial_{x t}^2 v +
     \lambda^0_1 \lambda^0_2 \partial_x^2 v + \frac{1}{\tau} \partial_t v = 0,
  \]
  where $\lambda^0_1 = \rho_0 U_f, \lambda^0_2 = \rho_0 U_f - \rho_0 h'
  (\rho_0)$.
\end{lemma}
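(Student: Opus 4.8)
The plan is to establish the two assertions of the lemma in turn: first recover the matrix form of the linearised equations from $(\tmop{ARZ})$, then decouple it into two identical damped wave equations by passing to the frame moving at speed $\lambda_{\ast}$.

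For the matrix form I would write the solution of (\ref{maineq}) as $(\rho_0 + \rho, U(\rho_0) + u)$, expand $h(\rho_0 + \rho) = h(\rho_0) + h'(\rho_0)\rho + O(\rho^2)$ and $U(\rho_0 + \rho) = U(\rho_0) - U_f\rho$, and discard every term that is at least quadratic in $(\rho,u)$. Conservation of mass gives directly $\partial_t\rho + U(\rho_0)\partial_x\rho + \rho_0\partial_x u = 0$. Expanding $u + h(\rho)$ in the second equation of $(\tmop{ARZ})$ yields $\partial_t u + h'(\rho_0)\partial_t\rho + U(\rho_0)\partial_x u + U(\rho_0)h'(\rho_0)\partial_x\rho + \frac{1}{\tau}(U_f\rho + u) = 0$; replacing $\partial_t\rho$ by $-U(\rho_0)\partial_x\rho - \rho_0\partial_x u$ from the linearised mass equation makes the two $\partial_x\rho$ terms cancel and leaves $\partial_t u + (U(\rho_0) - \rho_0 h'(\rho_0))\partial_x u + \frac{1}{\tau}(U_f\rho + u) = 0$, which is the announced system.

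For the decoupling I would set $q(x,t) = \rho(x + \lambda_{\ast}t, t)$ and $v(x,t) = u(x + \lambda_{\ast}t, t)$ with $\lambda_{\ast} = U(\rho_0) - \rho_0 U_f$. Differentiating along $x \mapsto x + \lambda_{\ast}t$ turns $(\partial_t + c\partial_x)\rho$, evaluated at the shifted argument, into $(\partial_t + (c - \lambda_{\ast})\partial_x)q$ (and similarly for $u$, $v$), and since $U(\rho_0) - \lambda_{\ast} = \rho_0 U_f = \lambda_1^0$ while $(U(\rho_0) - \rho_0 h'(\rho_0)) - \lambda_{\ast} = \rho_0(U_f - h'(\rho_0)) = \lambda_2^0$, the linearised system becomes, writing $L_i \assign \partial_t + \lambda_i^0\partial_x$,
\[ L_1 q + \rho_0\partial_x v = 0, \qquad L_2 v + \frac{1}{\tau}(U_f q + v) = 0 . \]
To get the scalar equation for $q$ I would apply $\rho_0\partial_x$ to the second identity; using that $L_2$ commutes with $\rho_0\partial_x$ and substituting $\rho_0\partial_x v = -L_1 q$ from the first identity, this gives $-L_2 L_1 q = -\frac{1}{\tau}(U_f\rho_0\partial_x q - L_1 q)$, i.e. $L_2 L_1 q = \frac{1}{\tau}(\lambda_1^0\partial_x q - L_1 q) = -\frac{1}{\tau}\partial_t q$ since $\rho_0 U_f = \lambda_1^0$. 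As $L_2 L_1 = \partial_t^2 + (\lambda_1^0 + \lambda_2^0)\partial_{xt}^2 + \lambda_1^0\lambda_2^0\partial_x^2$, this is exactly the claimed damped wave equation for $q$. The equation for $v$ follows the same way: apply $L_1$ to the second identity, eliminate $q$ through $L_1 q = -\rho_0\partial_x v$, and use $\rho_0 U_f = \lambda_1^0$ once more to cancel the first-order term, obtaining $L_2 L_1 v + \frac{1}{\tau}\partial_t v = 0$.

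This computation carries no genuine analytic difficulty; the entire content lies in the algebra of the cancellations. The decisive point — the one I would flag as the crux — is that $\lambda_{\ast} = U(\rho_0) - \rho_0 U_f$ is tuned so that the two hyperbolic speeds become precisely $\lambda_1^0, \lambda_2^0$ \emph{and} at the same time $\rho_0 U_f - \lambda_1^0 = 0$, so that the spurious term $\frac{\rho_0 U_f - \lambda_1^0}{\tau}\partial_x q$ produced by the elimination disappears and only the damping $\frac{1}{\tau}\partial_t q$ survives; the auxiliary use of the linearised mass equation to remove $\partial_t\rho$ from the momentum equation is the only other place where a little care is needed.
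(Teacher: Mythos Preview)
Your proposal is correct and follows essentially the same strategy as the paper: linearise $(\tmop{ARZ})$ around $(\rho_0,U(\rho_0))$, pass to the frame moving at speed $\lambda_\ast$, and eliminate one unknown to obtain the damped wave equation. The only cosmetic differences are that the paper combines the two first-order equations \emph{before} shifting to the moving frame (you shift first), and for the $q$-equation the paper applies $\partial_t+(U(\rho_0)-\rho_0 h'(\rho_0))\partial_x+\tfrac{1}{\tau}$ to the mass equation whereas you apply $\rho_0\partial_x$ to the momentum equation; both routes rely on the same cancellation $\rho_0 U_f=\lambda_1^0$ and are algebraically equivalent.
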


\begin{proof}
  The linear equations satisfied by $\rho$ and $u$ are
  \[ \partial_t \rho + U (\rho_0) \partial_x \rho + \rho_0 \partial_x u = 0 \]
  and
  \[ \partial_t u + (U (\rho_0) - \rho_0 h' (\rho_0)) \partial_x u +
     \frac{1}{\tau} (U_f \rho + u) = 0. \]
  Taking $\partial_t + U (\rho_0) \partial_x$ of the second equation, we have
  \begin{eqnarray*}
    &  & \partial_t^2 u + (2 U (\rho_0) - \rho_0 h' (\rho_0)) \partial_{x
    t}^2 u + U (\rho_0) (U (\rho_0) - \rho_0 h' (\rho_0)) \partial_x^2 u\\
    & + & \frac{1}{\tau} (\partial_t u + (U (\rho_0) - \rho_0 U_f) \partial_x
    u)\\
    & = & 0.
  \end{eqnarray*}
  The equation on $v$ follows from the change of variable $ v (x, t) = u (x -
  \lambda_{\ast} t, t)$. To get the equation on $\rho$, we take $\partial_t +
  (U (\rho_0) - \rho_0 h' (\rho_0)) \partial_x + \frac{1}{\tau}$ of the first
  equation and the result follows.
\end{proof}

\subsection{Proof of Proposition \ref{constlinstab}}

\begin{proof}
  From Lemma \ref{lineq31}, the function
  \[ v (x, t) = u (x - \lambda_{\ast} t, t) \]
  satisfies
  \[ \partial_t^2 v + (\lambda_1^0 + \lambda^0_2) \partial_{x t}^2 v +
     \lambda_1^0 \lambda^0_2 \partial_x^2 v + \frac{1}{\tau} \partial_t v = 0,
  \]
  where $\lambda_1^0 = \rho_0 U_f > 0, \lambda^0_2 = \rho_0 U_f - \rho_0 h'
  (\rho_0) < 0$ since $s_{\tmop{cc}} (\rho_0) = h' (\rho_0) - U_f > 0$. We
  have
  \[ v_0 = v_{| t = 0 \nobracket} = u_{| t = 0 \nobracket}, v_1 = \partial_t
     v_{| t = 0 \nobracket} = (\partial_t u + (U (\rho_0) - \rho_0 U_f)
     \partial_x u)_{| t = 0 \nobracket} . \]
  From Proposition \ref{dwesol}, we deduce that
  \begin{eqnarray*}
    v (x, t) & = & \int_{\lambda^0_2 t}^{\lambda_1^0 t} V (y, t) \left(
    \frac{v_0}{\tau} + (\lambda_1 + \lambda_2) v_0' + v_1 \right) (x - y) d
    y\\
    & + & \int_{\lambda^0_2 t}^{\lambda_1^0 t} \partial_t V (y, t) v_0 (x -
    y) d y.\\
    & + & \lambda_1^0 \frac{e^{\frac{- \lambda_1^0 t}{\tau (\lambda_1^0 -
    \lambda^0_2)}}}{\lambda_1^0 - \lambda^0_2} v_0 (x - \lambda_1^0 t) -
    \lambda^0_2 \frac{e^{\frac{\lambda^0_2 t}{\tau (\lambda_1^0 -
    \lambda^0_2)}}}{\lambda_1^0 - \lambda^0_2} v_0 (x - \lambda^0_2 t),
  \end{eqnarray*}
  where
  \[ V (y, t) = \frac{e^{- \frac{2}{\tau (\lambda_1^0 - \lambda^0_2)^2} \left(
     - \lambda_1^0 \lambda^0_2 t + \frac{\lambda_1^0 + \lambda^0_2}{2} y
     \right)}}{\lambda_1^0 - \lambda^0_2} I_0 \left( \frac{2 \sqrt{-
     \lambda_1^0 \lambda^0_2}}{\tau (\lambda_1^0 - \lambda^0_2)^2} \sqrt{- (y
     - \lambda_1^0 t) (y - \lambda^0_2 t)} \right) . \]
  We check easily that this quantity is well defined and $C^2$ with respect
  both to the position and derivative, and thus is the classical solution of
  (\ref{linconst}).
  
  We recall from Lemma \ref{Vest} that there exists $a_0 > 0$ (depending on
  $\lambda_1^0, \lambda^0_2, \tau$) such that for all $k, n \in \mathbb{N}$,
  there exists $C_{k, n} (\lambda_1^0, \lambda^0_2, \tau) > 0$ such that for
  $t \geqslant 0, y \in [\lambda_2 t, \lambda_1 t]$,
  \begin{equation}
    | \partial_x^k \partial_t^n V (y, t) | \leqslant \frac{C_{k, n}
    (\lambda_1^0, \lambda^0_2, \tau) e^{- a_0 \frac{y^2}{1 + t}}}{(1 +
    t)^{\frac{1}{2} + \frac{k}{2} + n}} . \label{eq318v4}
  \end{equation}
  We recall also the notation
  \[ F_{a, \gamma, w}^{\lambda_1^0, \lambda^0_2} (x, t) = \frac{1}{(1 +
     t)^{\gamma}} \int_{\lambda^0_2 t}^{\lambda_1^0 t} e^{- a \frac{y^2}{1 +
     t}} w (x - y) d y. \]
  We first estimate with (\ref{eq318v4}) that
  \begin{eqnarray}
    &  & \left| \int_{\lambda^0_2 t}^{\lambda_1^0 t} V (y, t) v_0 (x - y) d y
    \right| \nonumber\\
    & \leqslant & \frac{K (\lambda_1^0, \lambda^0_2, \tau)}{(1 +
    t)^{\frac{1}{2}}} \int_{\lambda^0_2 t}^{\lambda_1^0 t} e^{- a_0
    \frac{y^2}{1 + t}} | v_0 (x - y) | d y \nonumber\\
    & \leqslant & K (\lambda_1^0, \lambda^0_2, \tau) F_{a_0, \frac{1}{2}, |
    v_0 |}^{\lambda_1^0, \lambda^0_2} (x, t) . 
  \end{eqnarray}
  We check easily that there exists $b_1 > 0$ depending on $\lambda_1^0,
  \lambda^0_2, \tau$ such that
  \begin{eqnarray*}
    &  & \left| \lambda_1^0 \frac{e^{\frac{- \lambda_1^0 t}{\tau (\lambda_1^0
    - \lambda^0_2)}}}{\lambda_1^0 - \lambda^0_2} v_0 (x - \lambda_1^0 t) -
    \lambda_2 \frac{e^{\frac{\lambda^0_2 t}{\tau (\lambda_1^0 -
    \lambda^0_2)}}}{\lambda_1^0 - \lambda^0_2} v_0 (x - \lambda^0_2 t)
    \right|\\
    & \leqslant & C (\lambda_1^0, \lambda^0_2) e^{- b_1 t} (| v_0 (x -
    \lambda_1^0 t) | + | v_0 (x - \lambda^0_2 t) |) .
  \end{eqnarray*}
  Furthermore, by integration by parts,
  \begin{eqnarray}
    &  & \int_{\lambda^0_2 t}^{\lambda_1^0 t} V (y, t) v_0' (x - y) d y
    \nonumber\\
    & = & - \int_{\lambda^0_2 t}^{\lambda_1^0 t} V (y, t) \partial_y (v_0 (x
    - y)) d y \nonumber\\
    & = & - V (\lambda_1^0 t, t) v_0 (x - \lambda_1^0 t) + V (\lambda^0_2 t,
    t) v_0 (x - \lambda^0_2 t) \nonumber\\
    & + & \int_{\lambda^0_2 t}^{\lambda_1^0 t} \partial_y V (y, t) v_0 (x -
    y) d y.  \label{320v4}
  \end{eqnarray}
  From (\ref{eq318v4}) and Lemma \ref{Vest}, we check that there exists $b_2 >
  0$ depending on $\lambda_1^0, \lambda^0_2, \tau$ such that
  \begin{eqnarray*}
    &  & \left| \int_{\lambda^0_2 t}^{\lambda_1^0 t} V (y, t) v_0' (x - y) d
    y \right|\\
    & \leqslant & K (\lambda_1^0, \lambda^0_2, \tau) F_{a_0, \frac{1}{2}, |
    v_0 |}^{\lambda_1^0, \lambda_2} (x, t)\\
    & + & C (\lambda_1^0, \lambda^0_2) e^{- b_2 t} (| v_0 (x - \lambda_1^0 t)
    | + | v_0 (x - \lambda^0_2 t) |)
  \end{eqnarray*}
  for the same $a_0$ as equation (\ref{eq318v4}). With these estimate and
  integration by parts, we can check that for the initial datas
  \[ u_i = u_{| t = 0 \nobracket}, \rho_i = \rho_{| t = 0 \nobracket} \in
     C^j_{\tmop{loc}} (\mathbb{R}), \]
  we have from Lemma \ref{lineq31}, $\partial_t u + (U (\rho_0) - \rho_0 h'
  (\rho_0)) \partial_x u + \frac{1}{\tau} (U_f \rho + u) = 0$ (that we take at
  $t = 0$ to write $\partial_t u$ in terms of $\partial_x u, \rho, u$ at $t =
  0$), therefore there exists $b_0 > 0$ depending on $\lambda_1^0,
  \lambda^0_2, \tau$ such that we have
  \begin{eqnarray*}
    | v (x, t) | & \leqslant & K (\lambda_1^0, \lambda^0_2, \tau) F_{a_0,
    \frac{1}{2}, | u_i | + | \rho_i |}^{\lambda_1^0, \lambda^0_2} (x, t)\\
    & + & C (\lambda_1^0, \lambda^0_2) e^{- b_0 t} (| u_i (x - \lambda_1^0 t)
    | + | u_i (x - \lambda^0_2 t) |)\\
    & + & C (\lambda_1^0, \lambda^0_2) e^{- b_0 t} (| \rho_i (x - \lambda_1^0
    t) | + | \rho_i (x - \lambda^0_2 t) |) .
  \end{eqnarray*}
  For $n, k \in \mathbb{N}$, we now want to estimate $| \partial_t^n
  \partial_x^k v (x, t) |$. First, we check that
  \begin{eqnarray}
    &  & \left| \partial_t^n \partial_x^k \left( \lambda_1^0 \frac{e^{\frac{-
    \lambda_1^0 t}{\tau (\lambda_1^0 - \lambda^0_2)}}}{\lambda_1^0 -
    \lambda^0_2} u_i (x - \lambda_1^0 t) - \lambda_2 \frac{e^{\frac{\lambda_2
    t}{\tau (\lambda_1^0 - \lambda^0_2)}}}{\lambda_1^0 - \lambda^0_2} u_i (x -
    \lambda^0_2 t) \right) \right| \nonumber\\
    & \leqslant & K (n, \lambda_1^0, \lambda^0_2, \tau) e^{- b_1 t} \sum_{j =
    k}^{n + k} (| u_i^{(j)} (x - \lambda_1^0 t) | + | u_i^{(j)} (x -
    \lambda^0_2 t) |) .  \label{319}
  \end{eqnarray}
  Let us now look at
  \[ \partial_t^n \partial_x^k \left( \int_{\lambda^0_2 t}^{\lambda_1^0 t} V
     (y, t) u_i (x - y) d y \right) = \partial_t^n \left( \int_{\lambda^0_2
     t}^{\lambda_1^0 t} V (y, t) u^{(k)}_i (x - y) d y \right) . \]
  Either at least one derivative in time fall on the boundaries of the
  integral. In that case, the term coming from it can be estimated as
  (\ref{319}). If none does, then we are left with the term
  \[ \int_{\lambda^0_2 t}^{\lambda_1^0 t} \partial_t^n V (y, t) u^{(k)}_i (x -
     y) d y. \]
  On this term, we do integration by parts, in the spirit of (\ref{320v4}), to
  move the $k$ derivatives from $v_0$ to $V$. In the process, we create
  boundary terms that can be estimated as follow for some $b_2 > 0$:
  \begin{eqnarray}
    &  & \left| \int_{\lambda^0_2 t}^{\lambda_1^0 t} \partial_t^n V (y, t)
    u^{(k)}_i (x - y) d y - \int_{\lambda^0_2 t}^{\lambda_1^0 t} \partial_t^n
    \partial_y^k V (y, t) u^{}_i (x - y) d y \right| \nonumber\\
    & \leqslant & K (n, k, \lambda_1^0, \lambda^0_2, \tau) e^{- b_2 t}
    \sum_{j = 0}^{k - 1} (| u_i^{(j)} (x - \lambda_1^0 t) | + | u_i^{(j)} (x -
    \lambda_2 t) |) .  \label{320}
  \end{eqnarray}
  Finally, we estimate with Lemma \ref{Vest} that
  \begin{eqnarray*}
    &  & \left| \int_{\lambda^0_2 t}^{\lambda_1^0 t} \partial_t^n
    \partial_y^k V (y, t) u^{}_i (x - y) d y \right|\\
    & \leqslant & \frac{C_{k, n} (\lambda_1^0, \lambda^0_2, \tau)}{(1 +
    t)^{\frac{1}{2} + \frac{k}{2} + n}} \int_{\lambda^0_2 t}^{\lambda_1^0 t}
    e^{- a_0 \frac{y^2}{1 + t}} | u_i (x - y) | d y\\
    & \leqslant & C_{k, n} (\lambda_1^0, \lambda^0_2, \tau) F_{a_0,
    \frac{1}{2} + \frac{k}{2} + n, | u_i |}^{\lambda_1^0, \lambda^0_2} (x, t)
    .
  \end{eqnarray*}
  We can then estimate every other term by this methods. We put all
  derivatives on $V$, and the boundary terms this create can be estimated as
  in (\ref{320}). At the end, on $V$ we will have at least $k$ derivatives in
  position and $n$ in time, leading to the estimate on $\partial_t^n
  \partial_x^k v$.
  
  The estimate on $\partial_t^n \partial_x^k q$ follows from a similar proof,
  and this concludes the proof of Proposition \ref{constlinstab}.
\end{proof}

\section{Proof of the nonlinear stability (Theorem
\ref{NLconstlocTheorem})}\label{sec6vf}

\subsection{The equations of the nonlinear problem}

\begin{lemma}
  \label{lem32}Taking the problem (ARZ) for the variable $(\rho_0 + \rho, U
  (\rho_0) + u)$ yield the equations
  \begin{equation}
    \partial_t \left(\begin{array}{c}
      \rho\\
      u
    \end{array}\right) + A \partial_x \left(\begin{array}{c}
      \rho\\
      u
    \end{array}\right) + B \left(\begin{array}{c}
      \rho\\
      u
    \end{array}\right) = 0, \label{nleqconst}
  \end{equation}
  where
  \[ A = \left(\begin{array}{cc}
       U (\rho_0) + u & \rho_0 + \rho\\
       0 & U (\rho_0) + u - h' (\rho_0 + \rho) (\rho_0 + \rho)
     \end{array}\right) \]
  and
  \[ B = \frac{1}{\tau} \left(\begin{array}{cc}
       0 & 0\\
       U_f & 1
     \end{array}\right) . \]
  In particular, with $\lambda_{\ast} = U (\rho_0) - \rho_0 U_f$, the function
  \[ q (x, t) \assign \rho (x - \lambda_{\ast} t, t), v (x, t) \assign u (x -
     \lambda_{\ast} t, t) \]
  satisfies, with the nonlinear caracteristic speeds
  \[ \lambda_1 \assign \rho_0 U_f + v, \lambda_2 \assign \rho_0 U_f - (\rho_0
     + q) h' (\rho_0 + q) + v, \]
  the equations
  \begin{equation}
    \partial_t^2 v + (\lambda_1 + \lambda_2) \partial_{x t}^2 v + \lambda_1
    \lambda_2 \partial_x^2 v + \frac{1}{\tau} \partial_t v + \Omega_1
    \partial_x v = 0
  \end{equation}
  with
  \[ \Omega_1 \assign \partial_t \lambda_2 + \lambda_1 \partial_x \lambda_2 +
     \frac{1}{\tau} (v - U_f q) \]
  and
  \begin{equation}
    \partial_t^2 q + (\lambda_1 + \lambda_2) \partial_{x t}^2 q + \lambda_1
    \lambda_2 \partial_x^2 q + \left( \frac{1}{\tau} + \Omega_2 \right)
    \partial_t q + \Omega_3 \partial_x q = 0,
  \end{equation}
  with
  \[ \Omega_2 \assign \partial_x v, \Omega_3 \assign \frac{v - U_f q}{\tau} +
     \lambda_2 \partial_x v. \]
\end{lemma}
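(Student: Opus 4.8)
The statement is a computation; the plan is to carry it out in three stages — substitute the constant state and triangularise, translate to the moving frame, then cross-differentiate. For the first stage, inserting $\rho\mapsto\rho_0+\rho$ and $u\mapsto U(\rho_0)+u$ into $\partial_t\rho+\partial_x(\rho u)=0$ and using $\partial_x\rho_0=0$, $\partial_x U(\rho_0)=0$ gives at once $\partial_t\rho+(U(\rho_0)+u)\partial_x\rho+(\rho_0+\rho)\partial_x u=0$, the first row of $(\ref{nleqconst})$. For the second ARZ equation, expand $\partial_t\big((U(\rho_0)+u)+h(\rho_0+\rho)\big)+(U(\rho_0)+u)\partial_x\big((U(\rho_0)+u)+h(\rho_0+\rho)\big)$, use $U(\rho_0+\rho)-U(\rho_0)=-U_f\rho$ on the right-hand side, and replace $\partial_t\rho$ by $-(U(\rho_0)+u)\partial_x\rho-(\rho_0+\rho)\partial_x u$ from the first equation; the two terms carrying $h'(\rho_0+\rho)\partial_x\rho$ cancel, leaving $\partial_t u+\big(U(\rho_0)+u-(\rho_0+\rho)h'(\rho_0+\rho)\big)\partial_x u+\frac{1}{\tau}(U_f\rho+u)=0$, the second row. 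This proves $(\ref{nleqconst})$ with the stated $A$ and $B$.

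For the second stage, in the frame attached to $q,v$ a time derivative of the original fields corresponds to a derivative of $q,v$ along the line of speed $\lambda_{\ast}$ while space derivatives are unchanged; since $U(\rho_0)-\lambda_{\ast}=\rho_0 U_f$, the system becomes
\[(\partial_t+\lambda_1\partial_x)q=-(\rho_0+q)\partial_x v,\qquad (\partial_t+\lambda_2\partial_x)v=-\frac{1}{\tau}(U_f q+v),\]
with $\lambda_1=\rho_0 U_f+v$ and $\lambda_2=\rho_0 U_f+v-(\rho_0+q)h'(\rho_0+q)$ as in the statement.

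For the third stage I use the identity $(\partial_t+\mu_1\partial_x)(\partial_t+\mu_2\partial_x)=\partial_t^2+(\mu_1+\mu_2)\partial_{xt}^2+\mu_1\mu_2\partial_x^2+(\partial_t\mu_2+\mu_1\partial_x\mu_2)\partial_x$, valid for variable $\mu_i$. Applying $(\partial_t+\lambda_1\partial_x)$ to the $v$-equation, the left side is the claimed principal part plus $(\partial_t\lambda_2+\lambda_1\partial_x\lambda_2)\partial_x v$, while the right side equals $-\frac{1}{\tau}\big(U_f(\partial_t+\lambda_1\partial_x)q+(\partial_t+\lambda_1\partial_x)v\big)$; inserting $(\partial_t+\lambda_1\partial_x)q=-(\rho_0+q)\partial_x v$ produces the $\frac{1}{\tau}\partial_t v$ term and, after using $\lambda_1-U_f(\rho_0+q)=v-U_f q$, collects the remaining $\partial_x v$ terms into $\Omega_1\partial_x v$ with $\Omega_1=\partial_t\lambda_2+\lambda_1\partial_x\lambda_2+\frac{1}{\tau}(v-U_f q)$. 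Applying $(\partial_t+\lambda_2\partial_x)$ to the $q$-equation is analogous: since $\partial_t\lambda_1=\partial_t v$ and $\partial_x\lambda_1=\partial_x v$, the commutator term is $(\partial_t\lambda_1+\lambda_2\partial_x\lambda_1)\partial_x q=(\partial_t+\lambda_2\partial_x)v\cdot\partial_x q=-\frac{1}{\tau}(U_f q+v)\partial_x q$; the right side $-(\partial_t+\lambda_2\partial_x)\big((\rho_0+q)\partial_x v\big)$ is expanded by Leibniz, its second-order-in-$v$ part $-(\rho_0+q)(\partial_{xt}^2 v+\lambda_2\partial_x^2 v)$ is rewritten using the $x$-derivative of the $v$-equation, and the surviving first-order products are reassembled — using $(\rho_0+q)\partial_x v=-(\partial_t+\lambda_1\partial_x)q$ and $\lambda_1-\lambda_2=(\rho_0+q)h'(\rho_0+q)$ — into the coefficients $\frac{1}{\tau}+\Omega_2$ of $\partial_t q$ and $\Omega_3$ of $\partial_x q$.

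I expect the one delicate point to be the accounting in this last step: unlike the $v$-equation, the source of the $q$-equation already carries a derivative of $v$, so differentiating it brings in terms such as $\partial_x\lambda_2\cdot\partial_x v$, and one has to track carefully which of these are re-absorbed through the mass equation into combinations of $\partial_t q$ and $\partial_x q$ and which survive as coefficients. Everything else is a routine use of the product rule together with the relations $\partial_x\lambda_1=\partial_x v$ and $\lambda_1-\lambda_2=(\rho_0+q)h'(\rho_0+q)$.
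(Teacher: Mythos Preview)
Your proposal is correct and follows essentially the same route as the paper: derive the quasilinear first-order system by direct substitution (with the same cancellation of the $h'(\rho_0+\rho)\partial_x\rho$ terms), pass to the moving frame to obtain the two transport equations $(\partial_t+\lambda_1\partial_x)q=-(\rho_0+q)\partial_x v$ and $(\partial_t+\lambda_2\partial_x)v=-\tfrac{1}{\tau}(U_f q+v)$, and then cross-apply $(\partial_t+\lambda_1\partial_x)$ and $(\partial_t+\lambda_2\partial_x)$. The paper's proof is terser on the $q$-equation (it simply says ``taking $\partial_t+\lambda_2\partial_x$ of the first equation''), whereas you spell out how to eliminate the second derivatives of $v$ via the $x$-derivative of the $v$-equation and reabsorb the $(\rho_0+q)\partial_x v$ factors through the mass equation; this is exactly the bookkeeping the paper leaves implicit.
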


\

\begin{proof}
  We decompose $P = \rho_0 + \rho, V = U (\rho_0) + u$ in
  \[ \left\{\begin{array}{l}
       \partial_t P + \partial_x (P V) = 0\\
       \partial_t (V + h (P)) + V \partial_x (V + h (P)) = \frac{1}{\tau} (U
       (P) - V),
     \end{array}\right. \]
  where $(\rho_0, U (\rho_0))$ is a constant solution to $(\tmop{ARZ})$. We
  have
  \begin{eqnarray*}
    0 & = & \partial_t P + \partial_x (P V)\\
    & = & \partial_t \rho + (\rho_0 + \rho) \partial_x u + (U (\rho_0) + u)
    \partial_x \rho .
  \end{eqnarray*}
  Furthermore,
  \begin{eqnarray*}
    &  & \partial_t (V + h (P))\\
    & = & \partial_t u + h' (P) \partial_t \rho\\
    & = & \partial_t u + h' (P) (- (\rho_0 + \rho) \partial_x u - (U (\rho_0)
    + u) \partial_x \rho)
  \end{eqnarray*}
  and
  \[ V \partial_x (V + h (P)) = V \partial_x u + V h' (P) \partial_x \rho . \]
  Also, since $U (\rho) = U_f (1 - \rho)$,
  \begin{eqnarray*}
    - \frac{1}{\tau} (U (P) - V) & = & \frac{- 1}{\tau} (U (\rho_0 + \rho) - U
    (\rho_0) - u)\\
    & = & \frac{- 1}{\tau} (- U_f \rho - u),
  \end{eqnarray*}
  hence
  \begin{eqnarray}
    0 & = & \partial_t (V + h (P)) + V \partial_x (V + h (P)) - \frac{1}{\tau}
    (U (P) - V) \nonumber\\
    & = & \partial_t u + h' (P) (- (\rho_0 + \rho) \partial_x u - (U (\rho_0)
    + u) \partial_x \rho) \nonumber\\
    & + & V \partial_x u + V h' (P) \partial_x \rho \nonumber\\
    & + & \frac{1}{\tau} (U_f \rho + u) .  \label{311}
  \end{eqnarray}
  We conclude the proof of (\ref{nleqconst}) by remarking that $V h' (P)
  \partial_x \rho - (U (\rho_0) + u) h' (P) \partial_x \rho = 0.$ We deduce
  that
  \[ \left\{\begin{array}{l}
       \partial_t \rho + (U (\rho_0) + u) \partial_x \rho + (\rho_0 + \rho)
       \partial_x u = 0\\
       \partial_t u + (U (\rho_0) + u - h' (\rho_0 + \rho) (\rho_0 + \rho))
       \partial_x u + \frac{1}{\tau} (U_f \rho + u) = 0
     \end{array}\right. \]
  and therefore, if we define
  \[ v (x, t) \assign u (x - \lambda_{\ast} t, t) \]
  and
  \[ q (x, t) \assign \rho (x - \lambda_{\ast} t, t), \]
  we have the equations
  \begin{equation}
    \partial_t q + \lambda_1 \partial_x q + (\rho_0 + q) \partial_x v = 0
    \label{eqv1}
  \end{equation}
  and
  \begin{equation}
    \partial_t v + \lambda_2 \partial_x v + \frac{1}{\tau} (U_f q + v) = 0,
    \label{eqq1}
  \end{equation}
  where
  \[ \lambda_1 = \rho_0 U_f + v, \lambda_2 = \rho_0 U_f - (\rho_0 + q) h'
     (\rho_0 + q) + v. \]
  Taking $\partial_t + \lambda_1 \partial_x$ of equation (\ref{eqq1}), we
  prove equation (\ref{feqv}) with
  \[ \Omega_1 = \partial_t \lambda_2 + \lambda_1 \partial_x \lambda_2 +
     \frac{1}{\tau} (\lambda_1 - U_f (\rho_0 + q)) . \]
  Since $\lambda_1 = \rho_0 U_f + v$, we have
  \[ \Omega_1 = \partial_t \lambda_2 + \lambda_1 \partial_x \lambda_2 +
     \frac{1}{\tau} (v - U_f q) . \]
  Now, taking $\partial_t + \lambda_2 \partial_x$ of equation (\ref{eqv1}), we
  show equation (\ref{feqq}).
\end{proof}

\subsection{Proof of Theorem \ref{NLconstlocTheorem}}

\begin{proof}
  Let us give here an overview of the proof. In step 1 and 2 we recall briefly
  the equation satisfied by $q, v$ and the notations. In step 3 we set up the
  bootstrap. Step 4 to 6 are devoted respectfully to estimates on $v$, its
  first derivatives, then its second derivatives. We conclude in step 7 by
  explaining how to do similar estimates on $q$.
  
  \
  
  {\tmstrong{Step 1.}} Equation satisfied by $v (x, t) = u (x + (U (\rho_0) -
  \rho_0 U_f) t, t)$ and $q (x, t) = \rho (x + (U (\rho_0) - \rho_0 U_f) t,
  t)$.
  
  \
  
  We recall that here, $\rho_0$ is a constant. By Cauchy theory, we have the
  existence at least for small times of a solution of (\ref{maineq}). It
  exists in particular as long as $\rho, u$ and its derivatives in time are
  bounded.
  
  We recall that (\ref{maineq}) is equivalent to
  \[ \left\{\begin{array}{l}
       \partial_t \rho + (U (\rho_0) + u) \partial_x \rho + (\rho_0 + \rho)
       \partial_x u = 0\\
       \partial_t u + (U (\rho_0) + u - h' (\rho_0 + \rho) (\rho_0 + \rho))
       \partial_x u + \frac{1}{\tau} (U_f \rho + u) = 0
     \end{array}\right. \]
  if we write the solution $(\rho_0 + \rho, U (\rho_0) + u)$, and therefore,
  if we define
  \[ v (x, t) = u (x - \lambda_{\ast} t, t), q (x, t) = \rho (x -
     \lambda_{\ast} t, t), \]
  we have the equations
  \begin{equation}
    \partial_t q + \lambda_1 \partial_x q + (\rho_0 + q) \partial_x v = 0
    \label{eqv1bis}
  \end{equation}
  and
  \begin{equation}
    \partial_t v + \lambda_2 \partial_x v + \frac{1}{\tau} (U_f q + v) = 0,
    \label{eqq1bis}
  \end{equation}
  where
  \[ \lambda_1 = \rho_0 U_f + v, \lambda_2 = \rho_0 U_f - (\rho_0 + q) h'
     (\rho_0 + q) + v. \]
  We define
  \[ \lambda_1^0 = \rho_0 U_f > 0, \lambda_2^0 = \rho_0 U_f - \rho_0 h'
     (\rho_0) < 0. \]
  By Lemma \ref{lem32}, we have
  \begin{equation}
    \partial_t^2 v + (\lambda_1 + \lambda_2) \partial_{x t}^2 v + \lambda_1
    \lambda_2 \partial_x^2 v + \frac{1}{\tau} \partial_t v + \Omega_1
    \partial_x v = 0, \label{feqv}
  \end{equation}
  where
  \[ \Omega_1 = \partial_t \lambda_2 + \lambda_1 \partial_x \lambda_2 +
     \frac{1}{\tau} (\lambda_1 - U_f (\rho_0 + q)) . \]
  Similarly,
  \begin{equation}
    \partial_t^2 q + (\lambda_1 + \lambda_2) \partial_{x t}^2 q + \lambda_1
    \lambda_2 \partial_x^2 q + \left( \frac{1}{\tau} + \Omega_2 \right)
    \partial_t q + \Omega_3 \partial_x q = 0, \label{feqq}
  \end{equation}
  where
  \[ \Omega_2 = \partial_x v, \Omega_3 = \frac{v - U_f q}{\tau} + \lambda_2
     \partial_x v. \]

  {\tmstrong{{\tmstrong{Step 2}}.}} Linear and nonlinear parts of the equation
  on $v$.
  
  \
  
  We decompose equation (\ref{feqv}) in
  \begin{eqnarray*}
    &  & \partial_t^2 v + (\lambda^0_1 + \lambda^0_2) \partial_{x t}^2 v +
    \lambda^0_1 \lambda^0_2 \partial_x^2 v + \frac{1}{\tau} \partial_t v\\
    & = & - \Omega_1 \partial_x v - (\lambda_1 - \lambda_1^0 + \lambda_2 -
    \lambda_2^0) \partial_{x t}^2 v - (\lambda_1 \lambda_2 - \lambda_1^0
    \lambda_2^0) \partial_x^2 v
  \end{eqnarray*}
  with $\lambda_1^0 = \rho_0 U_f, \lambda_2^0 = \rho_0 U_f - \rho_0 h'
  (\rho_0)$ which are constants. We note $v_0 = v_{| t = 0 \nobracket} = u_{|
  t = 0 \nobracket}$ and $v_1 = \partial_t v_{| t = 0 \nobracket} = \left(
  (\nobracket - U_f \rho_0 - u + (\rho + \rho_0) h' \nobracket (\rho_0 +
  \rho)) \partial_x u - \frac{1}{\tau} (U_f \rho + u) \right)_{| t = 0
  \nobracket}$.
  
  \
  
  We define the linear part of the solution, $v_L$, as the solution to
  \[ \left\{\begin{array}{l}
       \partial_t^2 v_L + (\lambda^0_1 + \lambda^0_2) \partial_{x t}^2 v_L +
       \lambda^0_1 \lambda^0_2 \partial_x^2 v_L + \frac{1}{\tau} \partial_t
       v_L = 0\\
       v_{L | t = 0 \nobracket} = v_0, \partial_t v_{L | t = 0 \nobracket} =
       v_1 .
     \end{array}\right. \]
  Estimates on $v_L$ follows from Proposition \ref{constlinstab}: there exists
  $C_0, a_1, b_1 > 0$ (depending on $\lambda^0_1, \lambda_2^0, \tau$) such
  that for $n + k \leqslant 2$, we have
  \begin{eqnarray*}
    &  & | \partial_t^n \partial_x^k v_L (x, t) |\\
    & \leqslant & \frac{C_0}{(1 + t)^{\frac{1}{2} + \frac{k}{2} + n}}
    \int_{\lambda_2^0 t}^{\lambda_1^0 t} e^{- a_1 \frac{y^2}{1 + t}} (| u_i (x
    - y) | + | \rho_i (x - y) |) d y\\
    & + & C_0 e^{- b_1 t} \left( \sum_{j = 0}^{n + k} (| \rho_i^{(j)} | + |
    u_i^{(j)} |) (x - \lambda^0_1 t) + (| \rho_i^{(j)} | + | u_i^{(j)} |) (x -
    \lambda_2^0 t) \right)
  \end{eqnarray*}
  We define
  \[ S \assign - \Omega_1 \partial_x v - (\lambda_1 - \lambda_1^0 + \lambda_2
     - \lambda_2^0) \partial_{x t}^2 v - (\lambda_1 \lambda_2 - \lambda_1^0
     \lambda_2^0) \partial_x^2 v. \]
  From Proposition \ref{dwesol}, we can write $v = v_L + v_{\tmop{NL}}$ where
  \[ v_{\tmop{NL}} (x, t) \assign \int_0^t \int_{\lambda_2^0 (t -
     s)}^{\lambda^0_1 (t - s)} V (y, t - s) S (x - y, s) d y d s. \]

  {\tmstrong{{\tmstrong{Step 3}}.}} Notations and setting up the bootstrap.

  We recall the notation
  \[ F_{a, \gamma, w}^{\lambda^{}_1, \lambda_2} (x, t) = \frac{1}{(1 +
     t)^{\gamma}} \int_{\lambda_2 t}^{\lambda_1 t} e^{- a \frac{y^2}{1 + t}} w
     (x - y) d y. \]
  we define the function
  \[ w_i \assign \sum_{j = 0}^2 | \rho_i^{(j)} | + | u_i^{(j)} | \]
  and, to simplify the notations, we define for $a > 0, \gamma \in \mathbb{R}$
  the quantity
  \[ F_{a, \gamma, \delta} (x, t) \assign F_{a, \gamma, w_i}^{\lambda_1^0 +
     \delta, \lambda_2^0 - \delta} (x, t) = \frac{1}{(1 + t)^{\gamma}}
     \int_{(\lambda_2^0 - \delta) t}^{(\lambda_1^0 + \delta) t} e^{- a
     \frac{y^2}{1 + t}} w_i (x - y) d y \]
  for some small $\delta > 0$ that will be fixed later on. We also define for
  some $b, \delta > 0$ the quantity
  \[ G_{b, \delta, w}^{\mu} (x, t) = e^{- b t} \sup_{y \in [- \delta t, \delta
     t]} w (x - \mu t + y) \]
  and
  \[ G_{b, \delta} (x, t) \assign G_{b, \delta, w_i}^{\lambda_1^0} (x, t) +
     G_{b, \delta, w_i}^{\lambda_2^0} (x, t) . \]
  Our initial estimate on $v_L$ can be translated using $F_{a, \gamma,
  \delta}$ and $G_{b, \delta}$: There exists $C_L, a_L, b_L > 0$ depending on
  $\lambda_1^0, \lambda_2^0, \tau$ such that for $n + k \leqslant 2$,
  \[ | \partial_t^n \partial_x^k v_L (x, t) | \leqslant C_L \left( F_{a_L,
     \frac{1}{2} + \frac{k}{2} + n, 0} + G_{b_L, 0} \right) (x, t) \]
  for all $x \in \mathbb{R}, t \geqslant 0$. Finally, for small $\nu > 0$
  ($\nu < \frac{1}{8}$ will be needed in the proof) and $k + n \leqslant 2$,
  we define $\gamma_{\nu} (k, n)$ by
  \[ \gamma_{\nu} (0, 0) = \frac{1}{2}, \gamma_{\nu} (1, 0) = 1 - \nu,
     \gamma_{\nu} (0, 1) = \frac{3}{2} - \nu, \gamma_{\nu} (k, n) = 1 - \nu \]
  for $k + n = 2$. Remark that in all cases,
  \[ \gamma_{\nu} (k, n) \leqslant \frac{1}{2} + \frac{k}{2} + n. \]
  We want to bootstrap the following results: there exists $a, b, \delta > 0$
  such that for any fixed small $\nu > 0$, there exists $C_0 (\nu) > 0$ such
  that for $k + n \leqslant 1$,
  \[ | \partial_t^n \partial_x^k v (x, t) | + | \partial_t^n \partial_x^k q
     (x, t) | \leqslant C_0 (\nu) (F_{a, \gamma_{\nu} (k, n), \delta} + G_{b,
     \delta}) (x, t), \]
  and for $k + n = 2$, for some $a_2, b_2, \delta_2 > 0$,
  \[ | \partial_t^n \partial_x^k v (x, t) | + | \partial_t^n \partial_x^k q
     (x, t) | \leqslant C_0 (\nu) (F_{a_2, \gamma_{\nu} (k, n), \delta_2} +
     G_{b_2, \delta_2}) (x, t) . \]
  In the bootstrap, the coefficient $a_2, b_2, \delta_2$ are not the same as
  $a, b, \delta$ (in face $a_2, b_2$ will be respecfully smaller than $a, b$).
  In the computations, we will first take $b$ small enough to beat some
  constants depending on $\rho_0, U_f, h$, then $a, a_2, b_2$ will be taken
  small enough to beat constants depending on $b, \rho_0, U_f, h$.
  
  We denote by $T^{\ast} \geqslant 0$ the maximum time such that the solution
  exists in the classical sense (that is it is a $C^2$ function) and these
  estimates hold for given $C_0 (\nu), a, b, a_2, b_2, \delta$ and $\delta_2$.
  
  \
  
  First, from standard local Cauchy theory (see for instance
  {\cite{MR1816648}}), there exists $\tau_0^{\ast} > 0$ such that the solution
  exists and is $C^2$ on $[0, \tau_0^{\ast}]$. Furthermore, still from
  {\cite{MR1816648}}, if the estimates hold on $[0, T^{\ast}]$, then there
  exists $\tau^{\ast} > 0$ such that the solution exists and is $C^2$ on $[0,
  T^{\ast} + \tau^{\ast}]$. The fact that $T^{\ast} > 0$ if we take $C_0
  (\nu)$ large enough is a consequence of the standard estimates from Kato
  (see {\cite{MR390516}}).
  
  \
  
  Our goal is to show that $T^{\ast} = + \infty$. We suppose that \ $T^{\ast}
  < + \infty$. From subsection \ref{ss33vf}, we check that on $[0, T^{\ast}]$,
  for $C_0 (\nu)$ large enough,

  \begin{equation}
    \| \partial_t^n \partial_x^k v (., t) \|_{L^{\infty} (\mathbb{R})} + \|
    \partial_t^n \partial_x^k q (., t) \|_{L^{\infty} (\mathbb{R})} \leqslant
    C_0 (\nu) \frac{\| w_i \|_{L^1 (\mathbb{R})} + \| w_i \|_{L^{\infty}
    (\mathbb{R})}}{(1 + t)^{\gamma_{\nu} (k, n)}} \label{617vf}
  \end{equation}
  and
  \begin{equation}
    \| \partial_t^n \partial_x^k v (., t) \|_{L^1 (\mathbb{R})} + \|
    \partial_t^n \partial_x^k q (., t) \|_{L^1 (\mathbb{R})} \leqslant C_0
    (\nu) \frac{\| w_i \|_{L^1 (\mathbb{R})}}{(1 + t)^{\gamma_{\nu} (k, n) -
    \frac{1}{2}}} . \label{618vf}
  \end{equation}
  We recall that $\| w_i \|_{L^1 (\mathbb{R})} + \| w_i \|_{L^{\infty}
  (\mathbb{R})} \leqslant \varepsilon \ll 1$ by hypothesis. In particular, for
  $\varepsilon$ small enough, we have $\rho_0 + \rho \in] 0, 1 [, \lambda_1 >
  0, \lambda_2 < 0$ on $[0, T^{\ast}]$.
  
  \
  
  {\tmstrong{Step 4.}} Estimates on $v$
  
  \
  
  We recall from step 2. that $v = v_L + v_{\tmop{NL}}$ with
  \[ v_{\tmop{NL}} (x, t) = \int_0^t \int_{\lambda_2^0 (t - s)}^{\lambda^0_1
     (t - s)} V (y, t - s) S (x - y, s) d y d s \]
  and $v_L$ satisfies
  \[ | v_L (x, t) | \leqslant C_L \left( F_{a_1, \frac{1}{2}, 0} + G_{b_1, 0}
     \right) (x, t) \]
  for some fixed values of $C_L, a_1, b_1$ for all positive times. We will
  decompose $S$ in a particular form for this estimate. We recall that
  \[ S = - \Omega_1 \partial_x v - (\lambda_1 - \lambda_1^0 + \lambda_2 -
     \lambda_2^0) \partial_{x t}^2 v - (\lambda_1 \lambda_2 - \lambda_1^0
     \lambda_2^0) \partial_x^2 v. \]
  Here, the difficulty is to keep the decay in time and have an estimate with
  $F_{a_1, \frac{1}{2}, \delta}$. It would be easy to have the result with
  $F_{a_1, \frac{1}{2} - \nu, \delta}$ instead (by applying Lemma
  \ref{L34v4}), but we need some specific computation to not lose the factor
  $t^{\nu}$.
  
  \
  
  We start with the estimate of
  \[ E_1 \assign \int_0^t \int_{\lambda_2^0 (t - s)}^{\lambda^0_1 (t - s)} -
     V (y, t - s) ((\lambda_1 \lambda_2 - \lambda_1^0 \lambda_2^0)
     \partial_x^2 v) (x - y, s) d y d s, \]
  one of the terms appearing in $v_{\tmop{NL}}$. We have that
  \[ \partial_x^2 v (x - y, s) = - \partial_y (\partial_x v (x - y, s)), \]
  therefore, by inegration by parts,
  \begin{eqnarray*}
    &  & E_1 (x, t)\\
    & = & \int_0^t V (\lambda^0_1 (t - s), t - s) ((\lambda_1 \lambda_2 -
    \lambda_1^0 \lambda_2^0) \partial_x v) (x - \lambda^0_1 (t - s), s) d s\\
    & - & \int_0^t V (\lambda^0_2 (t - s), t - s) ((\lambda_1 \lambda_2 -
    \lambda_1^0 \lambda_2^0) \partial_x v) (x - \lambda^0_2 (t - s), s) d s\\
    & - & \int_0^t \int_{\lambda_2^0 (t - s)}^{\lambda^0_1 (t - s)}
    \partial_y V (y, t - s) ((\lambda_1 \lambda_2 - \lambda_1^0 \lambda_2^0)
    \partial_x v) (x - y, s) d y d s\\
    & - & \int_0^t \int_{\lambda_2^0 (t - s)}^{\lambda^0_1 (t - s)} V (y, t -
    s) (\partial_y (\lambda_1 \lambda_2 - \lambda_1^0 \lambda_2^0) \partial_x
    v) (x - y, s) d y d s.
  \end{eqnarray*}
  We have
  \[ \lambda_1 \lambda_2 - \lambda_1^0 \lambda_2^0 = (\lambda_1 - \lambda_1^0)
     \lambda_2 + \lambda_1^0 (\lambda_2 - \lambda_2^0), \]
  and
  \[ \lambda_1 - \lambda_1^0 = v, \lambda_2 - \lambda_2^0 = \rho_0 h' (\rho_0)
     - (\rho_0 + q) h' (\rho_0 + q) + v. \]
  Therefore on $[0, T^{\ast}]$, by equation (\ref{617vf}), we have
  \begin{equation}
    | (\lambda_1 \lambda_2 - \lambda_1^0 \lambda_2^0) (x - \lambda^0_1 (t -
    s), s) | \leqslant K C_0 (\nu) \varepsilon . \label{eq3288}
  \end{equation}
  This implies that on $[0, T^{\ast}]$,
  \begin{eqnarray*}
    &  & \left| \int_0^t V (\lambda^0_1 (t - s), t - s) ((\lambda_1 \lambda_2
    - \lambda_1^0 \lambda_2^0) \partial_x v) (x - \lambda^0_1 (t - s), s) d s
    \right|\\
    & \leqslant & K C_0 (\nu) \varepsilon \int_0^t | V (\lambda^0_1 (t - s),
    t - s) \partial_x v (x - \lambda^0_1 (t - s), s) | d s\\
    & \leqslant & K C_0 (\nu)^2 \varepsilon \int_0^t | V (\lambda^0_1 (t -
    s), t - s) F_{a, \gamma_{\nu} (1, 0), \delta} (x - \lambda^0_1 (t - s), s)
    | d s\\
    & + & K C_0 (\nu)^2 \varepsilon \int_0^t | V (\lambda^0_1 (t - s), t - s)
    G_{b, \delta} (x - \lambda^0_1 (t - s), s) | d s.
  \end{eqnarray*}
  From Lemmas \ref{Vest} and \ref{L33b4}, we have (if $a > 0$ is small enough)
  \begin{eqnarray*}
    &  & \int_0^t | V (\lambda^0_1 (t - s), t - s) F_{a, \gamma_{\nu} (1, 0),
    \delta} (x - \lambda^0_1 (t - s), s) | d s\\
    & \leqslant & K F_{a, \gamma_{\nu} (1, 0), \delta} (x, t)\\
    & \leqslant & K F_{a, \frac{1}{2}, \delta} (x, t) .
  \end{eqnarray*}
  From Lemma \ref{L37b4}, we have (if $a$ is small enough, depending on $b$)
  \begin{eqnarray*}
    &  & \int_0^t | V (\lambda^0_1 (t - s), t - s) G_{b, \delta} (x -
    \lambda^0_1 (t - s), s) | d s\\
    & \leqslant & K \left( G_{b, \delta} (x, t) + F_{a, \frac{1}{2}, \delta}
    (x, t) \right) .
  \end{eqnarray*}
  We deduce that
  \begin{eqnarray*}
    &  & \left| \int_0^t V (\lambda^0_1 (t - s), t - s) ((\lambda_1 \lambda_2
    - \lambda_1^0 \lambda_2^0) \partial_x v) (x - \lambda^0_1 (t - s), s) d s
    \right|\\
    & \leqslant & K C_0 (\nu)^2 \varepsilon \left( G_{b, \delta} (x, t) +
    F_{a, \frac{1}{2}, \delta} (x, t) \right) .
  \end{eqnarray*}
  The same estimate holds for the second line of $E_1$. For the third line,
  using for $s \in [0, T^{\ast}]$ that
  \[ | (\lambda_1 \lambda_2 - \lambda_1^0 \lambda_2^0) (x - y, s) | \leqslant
     \frac{K C_0 (\nu) \varepsilon}{(1 + s)^{1 / 2}}, \]
  we have (with Lemma \ref{Vest})
  \begin{eqnarray*}
    &  & \left| \int_0^t \int_{\lambda_2^0 (t - s)}^{\lambda^0_1 (t - s)}
    \partial_y V (y, t - s) ((\lambda_1 \lambda_2 - \lambda_1^0 \lambda_2^0)
    \partial_x v) (x - y, s) d y d s \right|\\
    & \leqslant & K C_0 (\nu)^2 \varepsilon \int_0^t \int_{\lambda_2^0 (t -
    s)}^{\lambda^0_1 (t - s)} \frac{e^{- a_0 \frac{y^2}{1 + t - s}}}{(1 + t -
    s)} F_{a, \gamma_{\nu} (1, 0) + \frac{1}{2}, \delta} (x - y, s) d y d s\\
    & + & K C_0 (\nu)^2 \varepsilon \int_0^t \int_{\lambda_2^0 (t -
    s)}^{\lambda^0_1 (t - s)} \frac{e^{- a_0 \frac{y^2}{1 + t - s}}}{(1 + t -
    s)} G_{b, \delta} (x - y, s) d y d s.
  \end{eqnarray*}
  From Lemma \ref{L34v4}, given that $a$ is small enough, we have
  \begin{eqnarray*}
    &  & \int_0^t \int_{\lambda_2^0 (t - s)}^{\lambda^0_1 (t - s)} \frac{e^{-
    a_0 \frac{y^2}{1 + t - s}}}{(1 + t - s)} F_{a, \gamma_{\nu} (1, 0) +
    \frac{1}{2}, \delta} (x - y, s) d y d s\\
    & \leqslant & K F_{a, 1 - \nu, \delta} (x, t)\\
    & \leqslant & K F_{a, \frac{1}{2}, \delta} (x, t),
  \end{eqnarray*}
  and using Lemma \ref{L36v4}, for $a$ small enough
  \begin{eqnarray*}
    &  & \int_0^t \int_{\lambda_2^0 (t - s)}^{\lambda^0_1 (t - s)} \frac{e^{-
    a_0 \frac{y^2}{1 + t - s}}}{(1 + t - s)} G_{b, \delta} (x, t) d y d s\\
    & \leqslant & K F_{a, 1, \delta} (x, t)\\
    & \leqslant & K F_{a, \frac{1}{2}, \delta} (x, t) .
  \end{eqnarray*}
  We deduce that
  \begin{eqnarray*}
    &  & \left| \int_0^t \int_{\lambda_2^0 (t - s)}^{\lambda^0_1 (t - s)}
    \partial_y V (y, t - s) ((\lambda_1 \lambda_2 - \lambda_1^0 \lambda_2^0)
    \partial_x v) (x - y, s) d y d s \right|\\
    & \leqslant & K C_0 (\nu)^2 \varepsilon F_{a, \frac{1}{2}, \delta} (x, t)
    .
  \end{eqnarray*}
  A similar computation give the same estimate on the fourth line of $E_1$.
  This concludes the proof of
  \[ | E_1 (x, t) | \leqslant K C_0 (\nu)^2 \varepsilon \left( F_{a,
     \frac{1}{2}, \delta} (x, t) + G_{b, \delta} (x, t) \right) . \]
  We can do a similar decomposition and estimation for
  \[ E_2 (x, t) = \int_0^t \int_{\lambda_2^0 (t - s)}^{\lambda^0_1 (t - s)} -
     V (y, t - s) ((\lambda_1 - \lambda_1^0 + \lambda_2 - \lambda_2^0)
     \partial_{x t}^2 v) (x - y, s) d y d s, \]
  leading to
  \[ | E_2 (x, t) | \leqslant K C_0 (\nu)^2 \varepsilon \left( F_{a,
     \frac{1}{2}, \delta} (x, t) + G_{b, \delta} (x, t) \right) . \]
  Finally, we look at
  \[ E_3 (x, t) = \int_0^t \int_{\lambda_2^0 (t - s)}^{\lambda^0_1 (t - s)} V
     (y, t - s) (\Omega_1 \partial_x v) (x - y, s) d y d s, \]
  where (from step 1)
  \[ \Omega_1 = \partial_t \lambda_2 + \lambda_1 \partial_x \lambda_2 +
     \frac{1}{\tau} (v - U_f q) . \]
  Simply using Lemma \ref{L34v4} at this stage would lead to an estimate with
  $F_{a, \frac{1}{2} - \nu, \delta}$ instead of $F_{a, \frac{1}{2}, \delta}$.
  It is critical to get $F_{a, \frac{1}{2}, \delta}$ here to make the
  bootstrap work. We had some margin in $E_1$ and $E_2$ but it will not be the
  case here. From step 1, we recall that
  \[ \partial_t v + \lambda_2 \partial_x v + \frac{1}{\tau} (U_f q + v) = 0,
  \]
  which implies that on $[0, T^{\ast}]$ we have
  \[ | (U_f q + v) (x, t) | \leqslant K C_0 (\nu) (G_{b, \delta} (x, t) +
     F_{a, 1 - \nu, \delta} (x, t)) . \]
  Remark that this is better than estimates on $q$ or $v$ individually. We
  decompose
  \[ \Omega_1 = \frac{2}{\tau} v + \partial_t \lambda_2 + \lambda_1 \partial_x
     \lambda_2 - \frac{1}{\tau} (U_f q + v) . \]
  Remark that
  \[ \left| \partial_t \lambda_2 + \lambda_1 \partial_x \lambda_2 -
     \frac{1}{\tau} (U_f q + v) \right| (x, t) \leqslant \frac{K C_0 (\nu)
     \varepsilon}{(1 + t)^{\gamma_{\nu} (1, 0)}} \]
  on $[0, T^{\ast}]$. Therefore, using Lemmas \ref{L34v4} and \ref{L36v4}, we
  estimate
  \begin{eqnarray*}
    &  & \left| \int_0^t \int_{\lambda_2^0 (t - s)}^{\lambda^0_1 (t - s)} V
    (y, t - s) \left( \left( \partial_t \lambda_2 + \lambda_1 \partial_x
    \lambda_2 - \frac{1}{\tau} (U_f q + v) \right) \partial_x v \right) (x -
    y, s) d y d s \right|\\
    & \leqslant & K C_0 (\nu) \varepsilon \int_0^t \int_{\lambda_2^0 (t -
    s)}^{\lambda^0_1 (t - s)} | V (y, t - s) | (F_{a, 2 \gamma_{\nu} (1, 0),
    \delta} + G_{b, \delta}) (x - y, s) d y d s\\
    & \leqslant & K C_0 (\nu) \varepsilon \left( F_{a, \frac{1}{2}, \delta}
    (x, t) + G_{b, \delta} (x, t) \right) .
  \end{eqnarray*}
  Finally, by integration by parts,
  \begin{eqnarray*}
    &  & \int_0^t \int_{\lambda_2^0 (t - s)}^{\lambda^0_1 (t - s)} V (y, t -
    s) \left( \frac{2}{\tau} v \partial_x v \right) (x - y, s) d y d s\\
    & = & \frac{- 1}{\tau} \int_0^t \int_{\lambda_2^0 (t - s)}^{\lambda^0_1
    (t - s)} V (y, t - s) \partial_y (v^2 (x - y, s)) d y d s\\
    & = & \frac{- 1}{\tau} \int_0^t V (\lambda^0_1 (t - s), t - s) v^2 (x -
    \lambda^0_1 (t - s), s) d s\\
    & + & \frac{1}{\tau} \int_0^t V (\lambda_2^0 (t - s), t - s) v^2 (x -
    \lambda_2^0 (t - s), s) d s\\
    & + & \frac{1}{\tau} \int_0^t \int_{\lambda_2^0 (t - s)}^{\lambda^0_1 (t
    - s)} \partial_y V (y, t - s) v^2 (x - y, s) d y d s.
  \end{eqnarray*}
  These three terms can be estimated as previously. We deduce that
  \[ | v_{\tmop{NL}} (x, t) | \leqslant | E_1 (x, t) | + | E_2 (x, t) | + |
     E_3 (x, t) | \leqslant K C_0 (\nu)^2 \varepsilon \left( F_{a,
     \frac{1}{2}, \delta} (x, t) + G_{b, \delta} (x, t) \right), \]
  and therefore, for $a < a_L, b < b_L$,
  \[ | v (x, t) | \leqslant | v_L (x, t) | + | v_{\tmop{NL}} (x, t) |
     \leqslant (C_L + K C_0 (\nu)^2 \varepsilon) \left( F_{a, \frac{1}{2},
     \delta} + G_{b, \delta} \right) (x, t) . \]
  Taking $\varepsilon$ small enough (depending on $\nu$), we deduce that on
  $[0, T^{\ast}]$, we have
  \[ | v (x, t) | \leqslant \frac{C_0 (\nu)}{2} \left( F_{a, \frac{1}{2},
     \delta} + G_{b, \delta} \right) (x, t) . \]
  Therefore $v$ is not the one responsible for the fact that $T^{\ast} \neq +
  \infty$.
  
  \
  
  {\tmstrong{Step 5.}} Estimates on $\partial_t v$ and $\partial_x v$
  
  \
  
  We compute that
  \begin{eqnarray*}
    \partial_x v_{\tmop{NL}} (x, t) & = & \int_0^t \int_{\lambda_2^0 (t -
    s)}^{\lambda^0_1 (t - s)} V (y, t - s) (- \partial_y) (S (x - y, s)) d y d
    s\\
    & = & - \int_0^t V (\lambda^0_1 (t - s), t - s) S (x - \lambda^0_1 (t -
    s), s) d s\\
    & + & \int_0^t V (\lambda^0_2 (t - s), t - s) S (x - \lambda^0_2 (t - s),
    s) d s\\
    & + & \int_0^t \int_{\lambda_2^0 (t - s)}^{\lambda^0_1 (t - s)}
    \partial_y V (y, t - s) S (x - y, s) d y d s.
  \end{eqnarray*}
  We estimate as previously that on $[0, T^{\ast}]$, estimating the second
  derivatives of $v$ and $q$ in $L^{\infty}$ by (\ref{617vf}), that
  \begin{eqnarray*}
    | S (x, t) | & \leqslant & | \Omega_1 \partial_x v | + | (\lambda_1 -
    \lambda_1^0 + \lambda_2 - \lambda_2^0) \partial_{x t}^2 v | + | (\lambda_1
    \lambda_2 - \lambda_1^0 \lambda_2^0) \partial_x^2 v |\\
    & \leqslant & \| \partial_x v \|_{L^{\infty}_x} | \Omega_1 | + \|
    \partial^2_{x t} v \|_{L^{\infty}_x} | (\lambda_1 - \lambda_1^0 +
    \lambda_2 - \lambda_2^0) |\\
    & + & \| \partial^2_x v \|_{L^{\infty}_x} | (\lambda_1 \lambda_2 -
    \lambda_1^0 \lambda_2^0) |\\
    & \leqslant & K C_0 (\nu)^2 \varepsilon \left( F_{a, \frac{3}{2} - \nu,
    \delta} + G_{b, \delta} \right) (x, t) .
  \end{eqnarray*}
  We deduce that on $[0, T^{\ast}]$,
  \[ | \partial_x v_{\tmop{NL}} (x, t) | \leqslant K C_0 (\nu)^2 \varepsilon
     (F_{a, \gamma_{\nu} (1, 0), \delta} + G_{b, \delta}) (x, t) . \]
  Since
  \[ | \partial_x v_L (x, t) | \leqslant C_L (F_{a_L, 1, 0} + G_{b_L, 0}) (x,
     t) \leqslant C_L (F_{a, \gamma_{\nu} (1, 0), \delta} + G_{b, \delta}) (x,
     t), \]
  we deduce that, taking $\varepsilon$ small enough (depending on $\nu$), on
  $[0, T^{\ast}]$ we have
  \[ | \partial_x v (x, t) | \leqslant \frac{C_0 (\nu)}{2} (F_{a, \gamma_{\nu}
     (1, 0), \delta} + G_{b, \delta}) (x, t) . \]
  Also,
  \begin{eqnarray*}
    \partial_t v_{\tmop{NL}} (x, t) & = & \lambda_1^0 \int_0^t V (\lambda^0_1
    (t - s), t - s) S (x - \lambda^0_1 (t - s), s) d s\\
    & - & \lambda_2^0 \int_0^t V (\lambda^0_2 (t - s), t - s) S (x -
    \lambda^0_2 (t - s), s) d s\\
    & + & \int_0^t \int_{\lambda_2^0 (t - s)}^{\lambda^0_1 (t - s)}
    \partial_t V (y, t - s) S (x - y, s) d y d s,
  \end{eqnarray*}
  and with similar estimates, we deduce that for $\varepsilon$ small enough,
  \[ | \partial_t v (x, t) | \leqslant \frac{C_0 (\nu)}{2} (F_{a, \gamma_{\nu}
     (0, 1), \delta} + G_{b, \delta}) (x, t) . \]

  {\tmstrong{Step 6.}} Estimates on the second derivatives of $v$
  
  \
  
  The method used in the previous steps does not work, as some terms will
  have three derivatives That is, if we compute $\partial_t^2 v_{\tmop{NL}}$
  for instance, a derivative will fall on $S$ in a boundary term, and it can
  not be moved by integration by parts. This difficulty comes from the fact
  that $\lambda_1$ and $\lambda_2$ are not constants, and they affect the cone
  of light. We are going to use section \ref{ss33v4} to solve this issue.
  
  We recall that
  \[ \partial_t^2 v + (\lambda_1 + \lambda_2) \partial_{x t}^2 v + \lambda_1
     \lambda_2 \partial_x^2 v + \frac{1}{\tau} \partial_t v + \Omega_1
     \partial_x v = 0, \]
  and we define $\delta^0 = \frac{1}{\tau}, w = \Omega_1$ to be consistent
  with the notations of section \ref{ss33v4}.
  
  \
  
  Following subsection \ref{sumsum}, we define the functions $H_1, H_2$ by
  \begin{equation}
    \left\{\begin{array}{l}
      \partial_t H_1 + \lambda_1 \partial_x H_1 = \lambda_1^0 - \lambda_1\\
      H_{1 | t = 0 \nobracket} = 0
    \end{array}\right. \label{eq329v4}
  \end{equation}
  and
  \[ \left\{\begin{array}{l}
       \partial_t H_2 + \lambda_2 \partial_x H_2 = \lambda_2^0 - \lambda_2\\
       H_{2 | t = 0 \nobracket} = 0.
     \end{array}\right. \]

  We define the new variables
  \[ y = x + \tilde{h}_1 (x, t), \nu = t + \tilde{h}_2 (x, t) \]
  where
  \[ \tilde{h}_1 = \frac{\lambda_2^0 H_1 - \lambda_1^0 H_2}{\lambda_2^0 -
     \lambda_1^0}, \tilde{h}_2 = \frac{H_1 - H_2}{\lambda_2^0 - \lambda_1^0} .
  \]
  Let us now compute some estimates on $H_1, H_2$. First, for $x \in
  \mathbb{R}$, we define $X_x$ on $[0, T^{\ast}]$ by
  \[ \left\{\begin{array}{l}
       X'_x (t) = \lambda_1 (X_x (t), t)\\
       X_x (0) = x.
     \end{array}\right. \]
  Now, we infer that for any $y \in \mathbb{R}, t \leqslant T^{\ast}$, there
  exists $x \in \mathbb{R}$ such that $X_x (t) = y$. Indeed, by (\ref{617vf})
  we have with some margin
  \[ | \lambda_1 - \lambda_1^0 | \leqslant \varepsilon \]
  and we take $\varepsilon$ small enough such that $\varepsilon <
  \frac{\lambda_1^0}{2}$. we deduce that $| X_x' (t) - \lambda_1^0 | \leqslant
  \frac{\lambda_1^0}{2}$, thus $\frac{3 \lambda_1^0}{2} \geqslant X_x' (t)
  \geqslant \frac{\lambda_1^0}{2} .$ In particular, $X_{y -
  \frac{\lambda_1^0}{2} t} (t) \geqslant y - \frac{\lambda_1^0}{2} t +
  \int_0^t X_x' (s) d s \geqslant y$ and $X_{y - \frac{3 \lambda_1^0}{2} t}
  (t) \leqslant y$. By continuity of $x \rightarrow X_x (t)$, we deduce that
  there exists $x \in \left[ y - \frac{3 \lambda_1^0}{2} t, y -
  \frac{\lambda_1^0}{2} t \right]$ such that $X_x (t) = y$.
  
  \
  
  Now, we have by differentiating the equation on $H_1$ that
  \[ (\partial_t + \lambda_1 \partial_x) (\partial_t H_1) = - \partial_t
     \lambda_1 (1 + \partial_x H_1) . \]
  Furthermore, by (\ref{eq329v4}) we have $\partial_x H_1 = \frac{\lambda_1^0
  - \lambda_1}{\lambda_1} - \frac{\partial_t H_1}{\lambda_1}$ hence
  \[ (\partial_t + \lambda_1 \partial_x) (\partial_t H_1) = - \partial_t
     \lambda_1 \left( 1 + \frac{\lambda_1^0 - \lambda_1}{\lambda_1} -
     \frac{\partial_t H_1}{\lambda_1} \right) . \]
  We deduce that for any $x \in \mathbb{R}$,
  \[ \partial_t (\partial_t H_1 (X_x (t), t)) = - \left( \partial_t \lambda_1
     \left( 1 + \frac{\lambda_1^0 - \lambda_1}{\lambda_1} - \frac{\partial_t
     H_1}{\lambda_1} \right) \right) (X_x (t), t) . \]
  This implies that
  \begin{eqnarray}
    \partial_t H_1 (X_x (t), t) & = & \partial_t H_1 (X_x (0), 0) \nonumber\\
    & + & \int_0^t - \left( \partial_t \lambda_1 \left( 1 + \frac{\lambda_1^0
    - \lambda_1}{\lambda_1} - \frac{\partial_t H_1}{\lambda_1} \right) \right)
    (X_x (s), s) d s.  \label{est215}
  \end{eqnarray}
  Let us show that for any $t \in [0, T^{\ast}], y \in \mathbb{R}$, we have
  \[ | \partial_t H_1 (y, t) | \leqslant 2 \varepsilon . \]
  We denote by $T \geqslant 0$ the maximum time such that this holds. Since
  $H_{1 | t = 0 \nobracket} = 0$ and $H_1$ satisfies the equation $(\partial_t
  + \lambda_1 \partial_x) H_1 = \lambda_1^0 - \lambda_1$, we have
  \[ | \partial_t H_1 (y, 0) | = | \lambda_1^0 - \lambda_1 (y, 0) | \leqslant
     \varepsilon . \]
  By continuity, we have $T > 0$. Suppose that $T < T^{\ast}$. Then for any $x
  \in \mathbb{R}$, using (\ref{617vf}) and (\ref{est215}), we estimate
  \begin{eqnarray*}
    | \partial_t H_1 (X_x (T^{\ast}), T^{\ast}) | & \leqslant & \varepsilon\\
    & + & \int_0^{T^{\ast}} \left| \left( \partial_t \lambda_1 \left( 1 +
    \frac{\lambda_1^0 - \lambda_1}{\lambda_1} - \frac{\partial_t
    H_1}{\lambda_1} \right) \right) (X_x (s), s) \right| d s\\
    & \leqslant & \varepsilon + C_1 \int_0^{T^{\ast}} \frac{\varepsilon^2}{(1
    + s)^{1 + \nu}} d s\\
    & \leqslant & \varepsilon + C_2 (\nu) \varepsilon^2\\
    & \leqslant & \frac{3}{2} \varepsilon
  \end{eqnarray*}
  given that $\varepsilon > 0$ is small enough (here appears the fact that
  $\varepsilon$ depends on $\nu$). by surjectivity of $x \rightarrow X_x
  (T^{\ast})$ we have a contradiction. This implies that $T = T^{\ast}$ and we
  have the estimate $| \partial_t H_1 (y, t) | \leqslant 2 \varepsilon .$ From
  (\ref{eq329v4}) we also estimate that $| \partial_y H_1 (y, t) | \leqslant 3
  \varepsilon$. We have the same estimates on $H_2$, and by (\ref{htilde27}),
  we have similar estimates on the first derivatives of $\tilde{h}_1,
  \tilde{h}_2$, which is what was needed in subsection \ref{ss33v4} to
  complete the change of variable. In particular,
  \[ (x, t) \rightarrow (x + \tilde{h}_1 (x, t), t + \tilde{h}_2 (x, t)) = (y
     (x, t), \nu (x, t)) \]
  is an inversible function since
  \[ | \partial_t \tilde{h}_1 | + | \partial_x \tilde{h}_1 | + | \partial_t
     \tilde{h}_2 | + | \partial_x \tilde{h}_2 | \leqslant K \varepsilon \ll 1.
  \]
  It is in fact close to the identity, in particular this function and its
  inverse have a norm close to one.
  
  We recall that $t + \tilde{h}_2 (x, t) \geqslant 0$ for all $t \geqslant 0$,
  using the fact that $\tilde{h}_2 (x, 0) = 0$ and $| \partial_t \tilde{h}_2 |
  \leqslant K \varepsilon$ which implies that $| \tilde{h}_2 (x, t) |
  \leqslant K \varepsilon t \leqslant \frac{t}{2}$ given $\varepsilon$ small
  enough. We have then
  \begin{equation}
    \partial_t h_2 (x, t) = \partial_t (t + \tilde{h}_2 (x, t)) = 1 +
    \partial_t \tilde{h}_2 (x, t) \geqslant 1 - 2 \varepsilon \label{timebis}
    .
  \end{equation}
  We then write
  \[ v (x, t) = w (x + h_1 (x, t), t + h_2 (x, t)) = w (y, \nu) \]
  and
  \[ q (x, t) = p (x + h_1 (x, t), t + h_2 (x, t)) = p (y, \nu) . \]
  we recall that the change of variable $\pi (y, \nu) \rightarrow (x, t)$ is
  invertible and close to the identity for $\varepsilon$ small enough. Then,
  by section \ref{ss33v4}, $w$ satisfies
  \begin{eqnarray}
    &  & (\partial_{\nu}^2 w + (\lambda_1^0 + \lambda_2^0) \partial_{y \nu}^2
    w + \lambda_1^0 \lambda_2^0 \partial_y^2 w + \delta^0 \partial_{\nu} w)
    (y, \nu) \nonumber\\
    & = & - (R_1 o \pi) \partial_{\nu} w - R_2 o \pi \partial_y
    w,  \label{623vf}
  \end{eqnarray}
  where $R_1, R_2$ are small functions depending on $H_1, H_2, \partial_x H_1,
  \partial_x H_2, \partial_t H_1, \partial_t H_2, \partial_x \lambda_1,
  \partial_t \lambda_1, \partial_x \lambda_2, \partial_t \lambda_2, w$ but not
  depending on their derivatives. The same can be said for $\pi$, that depends
  on $H_1, H_2$ but not its derivatives.
  
  At $\nu = 0$, we have $t = 0$ from section \ref{ss33v4}, therefore
  \[ w (y, 0) = v (y, 0) = 0, \partial_{\nu} w (y, 0) = \partial_{\nu} \pi_1
     (y, 0) \partial_x v (y, 0) + \partial_{\nu} \pi_2 (y, 0) \partial_t v (y,
     0) = 0. \]
  From Proposition \ref{dwesol}, with $w_0 = w_{| \nu = 0 \nobracket}$ and
  $w_1 = \partial_{\nu} w_{| \nu = 0 \nobracket}$, as well as
  \[ S = - (R_1 o \pi) \partial_{\nu} w - R_2 o \pi \partial_y
     w, \]
  we have
  \begin{eqnarray}
    w (y, \nu) & = & w_L (y, \nu) \nonumber\\
    & + & \int_0^{\nu} \int_{\lambda_2^0 (\nu - s)}^{\lambda^0_1 (\nu - s)} V
    (z, \nu - s) S (y - z, s) d z d s.  \label{eq618vf}
  \end{eqnarray}
  Let us make a few remarks on the domain of definition of $w$, that is
  \[ \Pi \assign \pi^{- 1} (\mathbb{R}_x \times [0, T^{\ast}]) . \]
  Since $\pi$ is $C^1$ and close to the identity, we have $\Pi = \{ (z,
  T_{\pi} (z), z \in \mathbb{R}) \}$ for some smooth function $T_{\pi}$ that
  satisfies $| T_{\pi}' | \leqslant K \varepsilon$. In particular, if $(y,
  \nu) \in \Pi$ and $s \in [0, \nu], z \in [\lambda_2^0 (\nu - s), \lambda_1^0
  (\nu - s)]$, then $(y - z, s) \in \Pi$ if $\varepsilon$ is small enough.
  Therefore in equation (\ref{eq618vf}) the function $S$ will always be taken
  in a point where we have our bootstrap estimate.
  
  \
  
  Now, let us estimate $w, \partial_y w, \partial_{\nu} w$ by using
  \[ w (y, \nu) = v (\pi^{- 1} (y, \nu)) . \]
  We recall that $\pi^{- 1}_1 (y, \nu) = x, \pi_2^{- 1} (y, \nu) = t$. With
  section \ref{ss33v4}, we estimate that
  \[ (1 + \varepsilon) \nu \geqslant \pi_2^{- 1} (y, \nu) \geqslant (1 -
     \varepsilon) \nu, \]
  and that
  \[ | y - \pi^{- 1}_1 (y, \nu) | \leqslant 2 \varepsilon \nu . \]
  Then using the results of steps 4 and 5, we deduce that
  \[ | w (y, \nu) | \leqslant C_0 (\nu) \left( F_{a, \frac{1}{2}, \delta} +
     G_{b, \delta} \right) (\pi^{- 1}_1 (y, \nu), \pi_2^{- 1} (y, \nu)) . \]
  First, we compute
  \begin{eqnarray}
    &  & G_{b, \delta} (\pi^{- 1}_1 (y, \nu), \pi_2^{- 1} (y, \nu))
    \nonumber\\
    & \leqslant & e^{- b \pi_2^{- 1} (y, \nu)} \sup_{z \in [- \delta \pi_2^{-
    1} (y, \nu), \delta \pi_2^{- 1} (y, \nu)]} w_i (\pi^{- 1}_1 (y, \nu) - z)
    \nonumber\\
    & \leqslant & e^{- b (1 - \varepsilon) \nu} \sup_{z \in [- \delta (1 +
    \varepsilon) \nu, \delta (1 + \varepsilon) \nu]} w_i (\pi^{- 1}_1 (y, \nu)
    - z) \nonumber\\
    & \leqslant & e^{- b (1 - \varepsilon) \nu} \sup_{z \in [- \delta (1 + 2
    \varepsilon) \nu, \delta (1 + 2 \varepsilon) \nu]} w_i (y - z) \nonumber\\
    & \leqslant & G_{b (1 - \varepsilon), \delta (1 + 2 \varepsilon)} (y,
    \nu) .  \label{vf624}
  \end{eqnarray}
  We also compute that
  \begin{eqnarray}
    &  & F_{a, \frac{1}{2}, \delta} (\pi^{- 1}_1 (y, \nu), \pi_2^{- 1} (y,
    \nu)) \nonumber\\
    & = & \frac{1}{\sqrt{1 + \pi_2^{- 1} (y, \nu)}} \int_{(\lambda_2^0 -
    \delta) \pi_2^{- 1} (y, \nu)}^{(\lambda_1^0 + \delta) \pi_2^{- 1} (y,
    \nu)} e^{- a \frac{z^2}{1 + \pi_2^{- 1} (y, \nu)}} w_i (\pi^{- 1}_1 (y,
    \nu) - z) d z \nonumber\\
    & \leqslant & \frac{2}{\sqrt{1 + \nu}} \int_{(\lambda_2^0 - \delta')
    \nu}^{(\lambda_1^0 + \delta') \nu} e^{- \frac{a}{(1 + \varepsilon)}
    \frac{(\varepsilon \nu + z)^2}{1 + \nu}} w_i (y - z) d z \nonumber\\
    & \leqslant & \frac{2}{\sqrt{1 + \nu}} \int_{(\lambda_2^0 - \delta')
    \nu}^{(\lambda_1^0 + \delta') \nu} e^{- \frac{a}{2 (1 + \varepsilon)}
    \frac{z^2}{1 + \nu}} w_i (y - z) d z \nonumber\\
    & \leqslant & 2 F_{a_3, \frac{1}{2}, \delta_3} (y, \nu)  \label{vf625}
  \end{eqnarray}
  given that $a_3, \delta_3$ are small enough (compared to $a, \delta_1$). We
  deduce that on $\Pi$ for some $b_3 > 0$,
  \[ | w (y, \nu) | \leqslant 2 C_0 (\nu) \left( F_{a_3, \frac{1}{2},
     \delta_3} + G_{b_3, \delta_3} \right) (y, \nu) . \]
  A similar estimate can be done for $\partial_y w$ and $\partial_{\nu} w$,
  leading to (taking $a_3, b_3, \delta_3$ small enough)
  \[ | \partial_y w (y, \nu) | \leqslant 2 C_0 (\nu) (F_{a_3, 1 - \nu,
     \delta_3} + G_{b_3, \delta_3}) (y, \nu) \]
  and
  \[ | \partial_{\nu} w (y, \nu) | \leqslant 2 C_0 (\nu) (F_{a_3, 1 - \nu,
     \delta_3} + G_{b_3, \delta_3}) (y, \nu) . \]
  Now we estimate the second derivatives with
  \begin{eqnarray*}
    w (y, \nu) & = & w_L (y, \nu)\\
    & + & \int_0^{\nu} \int_{\lambda_2^0 (\nu - s)}^{\lambda^0_1 (\nu - s)} V
    (z, \nu - s) S (y - z, s) d z d s.
  \end{eqnarray*}
  The estimate on $\partial_{\nu}^n \partial_y^k w_L$ for $n + k = 2$ follows
  from initial conditions and Proposition \ref{constlinstab}. For the
  nonlinear part, we first compute
  \begin{eqnarray*}
    &  & \partial_{\nu y}^2 \left( \int_0^{\nu} \int_{\lambda_2^0 (\nu -
    s)}^{\lambda^0_1 (\nu - s)} V (z, \nu - s) S (y - z, s) d z d s \right)\\
    & = & \lambda_1^0 \int_0^{\nu} V (\lambda^0_1 (\nu - s), \nu - s)
    \partial_y S (y - \lambda^0_1 (\nu - s), s) d s\\
    & - & \lambda_2^0 \int_0^{\nu} V (\lambda^0_2 (\nu - s), \nu - s)
    \partial_y S (y - \lambda^0_2 (\nu - s), s) d s\\
    & + & \int_0^{\nu} \int_{\lambda_2^0 (\nu - s)}^{\lambda^0_1 (\nu - s)}
    \partial_{\nu} V (z, \nu - s) \partial_y S (y - z, s) d z d s.
  \end{eqnarray*}
  We recall that $S = - (R_1 o \pi) \partial_{\nu} w - R_2 o \pi
  \partial_y w$. We check that on $\Pi$, with our bootstrap estimates, we have
  \[ | \partial_y S | \leqslant K (\nu) \varepsilon (F_{a_3, 1 - \nu,
     \delta_3} + G_{b_3, \delta_3}) . \]
  Using Lemmas \ref{L33b4}, \ref{L34v4}, \ref{L36v4} and \ref{L37b4}, we can
  thus show by a bootstrap argument that if we take $\varepsilon$ small
  enough, then on $\Pi$ we have
  \[ | \partial_{\nu y}^2 w (y, \nu) | \leqslant C_0 (\nu) (F_{a_3, 1 - \nu,
     \delta_3} + G_{b_3, \delta_3}) (y, \nu) . \]
  Similarly, we can estimate $\partial_{\nu}^2 w$ and get the same estimate.
  By (\ref{623vf}), the equation satisfied by $w$, we deduce the same estimate
  on $\partial_y^2 w$.
  
  Finally, we infer that these estimates holds also in the variables $(x, t)
  \in \mathbb{R} \times [0, T^{\ast}]$ if we consider $\delta_2, a_2, b_2$
  small enough instead of $\delta_3, a_3, b_3$ by doing the reversed estimates
  of (\ref{vf624}) and (\ref{vf625}) (the computations are similar). This
  concludes the estimates on $v$ and its first and second derivatives.
  
  \
  
  {\tmstrong{Step 7.}} Conclusion
  
  \
  
  To conclude the proof, we need to do the same estimates on $q$. The
  equation satisfied by $q$ is (\ref{feqq}) which is of the same form than the
  one on $v$. We infer that a similar proof, with the same steps 4-6 give the
  same estimate on $q$. This concludes the proof of the bootstrap and the fact
  that $T^{\ast} = + \infty$.
\end{proof}

\end{document}